\documentclass[a4paper, twoside]{article}
\usepackage[english]{babel}

\usepackage[letterpaper,top=2cm,bottom=2cm,left=3cm,right=3cm,marginparwidth=1.75cm]{geometry}
\usepackage{fancyhdr}

\usepackage{amsmath}
\usepackage{amsthm}
\usepackage{amssymb}
\usepackage{enumitem}

\usepackage{algorithm}     
\usepackage{algpseudocode}

\usepackage{graphicx}
\usepackage[colorlinks=true, allcolors=blue]{hyperref}

\usepackage{tikz}
\usepackage{mathtools} % Add this to the preamble
\usepackage{subcaption}
\usepackage{amsmath}
\usepackage{tikz-3dplot} % For 3D coordinate transformations
\usepackage{pgfplots} 
\pgfplotsset{compat=1.18}
\usepackage{setspace}
\usetikzlibrary{arrows.meta,bending,positioning}
\newcommand{\interior}{\mathop{}\!\mathrm{int}}
\newcommand{\pblp}{\mathrm{PBLP}}
\newcommand{\tolp}{\mathrm{TOLP}}
\newcommand{\w}{\mathcal{W}}
\newcommand{\en}{\mathrm{EN}}
\newcommand{\ws}{\mathrm{WS}}
\newcommand{\wsc}{\mathrm{wsc}}

\newtheorem{theorem}{Theorem}
\newtheorem{corollary}[theorem]{Corollary}

\newtheorem{proposition}[theorem]{Proposition}
\theoremstyle{definition}
\newtheorem{definition}[theorem]{Definition}
\newtheorem{example}[theorem]{Example}
\newtheorem{remark}[theorem]{Remark}

\title{Parametric Biobjective Linear Programming\\[2ex]}
\author{Kezang Yuden\textsuperscript{1\thanks{Corresponding author: \texttt{yuden@mathematik.uni-kl.de};\\ Contributing authors: \texttt{l.nemesch@math.rptu.de}; \texttt{stefan.ruzika@math.rptu.de}}},  Levin Nemesch$^1$, and Stefan Ruzika$^1$}

\date{
	$^1$\small Department of Mathematics, RPTU Kaiserslautern-Landau,\\ 67663 Kaiserslautern, Germany\\[2ex]%%
}

\begin{document}
	\maketitle
	
	\begin{abstract}
		We consider  parametric linear programming problems with multiple objective functions depending linearly on some parameter. 
		Both parametric (single-objective) linear programming and (non-parametric) multi-objective linear programming are well-researched topics.
		However, literature on the combination of both, parametric linear programming with multiple objectives, is scarce.
		This research gap encourages our work in this field.
		Our main focus is on biobjective linear programs with a single parameter. We establish a connection of this problem to non-parametric multi-objective problems. Using the so-called weight set decomposition, we are able to explain the behavior of parametric biobjective linear programs when the parameter value is variated.
		We investigate two special cases of parametric biobjective linear programs: In the first, there is only one parametric objective and, in the second, the parametric dependency is the same for both objectives. We prove that there is a one-to-one correspondence between the solution of the parametric program and the solution of the triobjective program using the weighted sum scalarization. We provide structural insights to the solution of the parametric biobjective linear program with respect to extreme weights of the weight set of the triobjective linear program and develop solution strategies for the parametric program.\\

		\noindent\textbf{Keywords:} parametric optimization, multi-objective optimization, biobjective linear programming, weight set decomposition
	\end{abstract}
	
	\section{Introduction}\label{sec1}
	
	The work presented in this article lies at the intersection of two well-studied branches of optimization and mathematical programming: \emph{parametric optimization} and \emph{multi-objective optimization}. Before describing the subject of our research, we first outline the development and current status of these two disciplines.
	
	Parametric optimization is a classical research topic in optimization. Its purpose is to study the behavior of solutions when the objective functions or constraints of an optimization problem depend on a non-negative parameter or multiple parameters.
	We call a problem \emph{linear parametric} if the objectives are affine linear dependent on the parameters and the constraints are independent of the parameters.
	%A single-parameter optimization problem arises when the problem varies with respect to the value of a single parameter.
	For a detailed overview of the literature on linear parametric optimization, we refer to the recent survey of Nemesch et al.~\cite{Nemesch2025}. In the 1950s, Gass and Saaty are among the first to explore the behavior of solutions when coefficients in the objective function are parameterized, introducing the concept of a parametric objective function~\cite{Saaty1954}. They present a technique for finding, for each parameter value, the optimal solution to the corresponding linear optimization problem~\cite{Gass1955}. Murty~\cite{Murty1976} adapts the parametric simplex method to solve linear parametric problems. Eisner and Severance~\cite{Eisner1976} develop an efficient algorithm to solve parametric optimization problems by computing parameter values where the optimal solution changes without exhaustively searching the entire parameter set. Later, Gusfield~\cite{Gusfield1983} develops a general method to find the sequence of optimal solutions for the entire parameter set. His algorithm builds upon the parametric search method described by Megiddo~\cite{Megiddo1978}. Several prominent combinatorial optimization problems have been studied in a parametric setting, including the parametric shortest problem~\cite{Young1991}, the parametric assignment problem~\cite{Gassner2010}, and the parametric minimum cost flow problem~\cite{Carstensen1983}.
	In addition to exact algorithms, several approximation algorithms have been developed for parametric optimization problems.
	For a comprehensive overview of approximation algorithms for parametric problems, we refer to the work of Nemesch et al.~\cite{Nemesch2025}. 
	%Giudici et al.~\cite{Giudici2017} present an approximation algorithm for the parametric version of the binary knapsack problem. 
	% More recently, Bazgan et al.~\cite{Bazgan2022} provide an approximation algorithm for a general class of parametric optimization problems which computes a set of solutions that approximate all feasible solutions for all parameter values within the given parameter interval.

	A second, well-studied field of optimization comprises multi-objective optimization problems (MOP), which deal with optimization of multiple, often conflicting objectives.
	%It has become a multidisciplinary field of research due to its practical relevance for real-world optimization problems. 
	Typically, no single feasible solution is optimal for all objectives simultaneously due to their conflicting nature. Instead, trade-offs must be made to balance the objectives. Thus, the notion of optimality is usually understood in the sense of \emph{Pareto optimality}. The Pareto optimal solutions represent the set of all optimal compromises and inform a decision maker about reasonable alternatives. The goal of multi-objective optimization is to compute the \emph{Pareto front}, or the set of \emph{nondominated images} in the image set.
	
	In 1975, Yu and Zeleny \cite{Yu1975} show that all nondominated images of a multi-objective linear problem (MOLP) with linear objective functions form a subset of the convex hull of the extreme nondominated images. 
	Benson~\cite{Benson1998} presents an algorithm, known as the ``Benson's Outer Approximation Algorithm'' to generate the set of all extreme nondominated images in the image set for a MOLP. Later, Heyde and Löhne~\cite{Heyde2008} present a geometric approach to duality for multi-objective linear programming, closely related to weighted sum scalarization. Another method for finding extreme nondominated images of multi-objective mixed integer programs is developed by Özpeynirci and Köksalan~\cite{Ozpeynirci2010}. Ehrgott et al.~\cite{Ehrgott2012} present a dual variant of Benson's outer approximation algorithm using the geometric duality theory of multi-objective linear program. 
	One of the well known methods to find the extreme nondominated images of MOLP with two objectives is the dichotomic approach~\cite{Cohon2004}.
	Most of the time, solving a multi-objective optimization problem  is a challenging task.
	For example, in multi-objective linear programming, the set of non-dominated images usually is infinite, and even the number of \emph{extreme non-dominated images} can be superpolynomial in the input size~\cite{Ruhe1988}.
	For a discussion on the hardness of general multi-objective combinatorial optimization problems, see~\cite{Boekler2017}. 
	
	A common approach to solve multi-objective optimization problems is by scalarization, which transforms the problem into a scalar-valued optimization problem. One such scalarization is the weighted sum scalarization method introduced by Zadeh~\cite{Zadeh1963}. A single objective is obtained by taking a non-negative linear combination of the objectives. This weighted sum objective is optimized over the same feasible set. Geoffrion~\cite{Geoffrion1968} proves that an optimal solution of the weighted sum scalarization of a MOP for some weight with strictly positive components is an efficient solution of a MOP. 
	In terms of the weight set, Benson and Sun \cite{Benson2002} develop a weight set decomposition algorithm to generate the extreme nondominated images of a MOLP. Przybylski et al.~\cite{Przybylski2010} present structural results of the weight set decomposition and propose an iterative algorithm to enumerate all extreme nondominated images by shrinking supersets of the actual weight set components. 
	More recently, Halffmann et al.~\cite{Halffmann2020} propose a weight set decomposition algorithm for multi-objective integer linear program.

	The combination of these two fields of optimization, i.\,e.~parametric and multi-objective optimization seem apparent, yet it remains largely unexplored in literature.
	The only closely related literature is that of sensitivity analysis, which typically examines how changes in objective coefficients affect the entire efficient set or a single solution.
	%Parametric optimization is connected to several other topics in optimization such as sensitivity analysis or robust optimization. 
	As data such as costs, risk, and demand might be uncertain in many real-life problems, sensitivity analysis has been proposed to address this data uncertainty. 
	%Over the years, several variants of parametric optimization problems such as 
	Wendell~\cite{Wendell1984} presents a tolerance approach in linear programming that caters to independent as well as simultaneous variation in several parameters in the coefficients of the objective function. This tolerance approach is further extended to multi-objective linear programs by Hansen et al.~\cite{Hansen1989}. They use the weighted sum method to find the maximum tolerance percentage for the weights to deviate so that a basic solution remains optimal. Sitarz~\cite{Sitarz2008} analyses changes associated to one objective function coefficient and proves that the parameter set in which a given solution remains efficient in a multi-objective linear program is convex. However, these studies examine the sensitivity region with respect to only one extreme efficient solution. Another closely related work is that of Andersen et al.~\cite{Andersen2025} where they study the sensitivity of the cost coefficients in multi-objective integer problems. They show that the sensitivity region with respect to a single objective function coefficient is a convex set. They obtain a sensitivity region in terms of permissible changes to coefficients so that a set of efficient solutions remains efficient. 
	
	This limited research motivates the study of theoretical and algorithmic approaches for linear parametric multi-objective programming (PMOP). Our article is an effort to lay the foundation in this area based on parametric and multi-objective optimization techniques. We consider two variants of the parametric biobjective linear program where the objective functions depend on a real-valued parameter. The goal of PMOP is to find, for each parameter value, a set of Pareto optimal solutions. More precisely, we aim for a decomposition of the parameter set into subintervals such that for every subinterval the same solutions are Pareto optimal. The parameter values where the Pareto optimal sets change are of particular interest. We develop two approaches to obtain these so-called breakpoints, and for each subinterval determined by two consecutive breakpoints, also a set of Pareto optimal solutions. To do so, we relate each of the two parametric biobjective programs to a triobjective program. In fact, each of the biobjective problems can be seen as a partial weighted sum scalarization \cite{Halffmann2020} of the corresponding triobjective optimization problem. We make use of the weight set decomposition and relate structures in the weight set to explain the optimal solution sets of the parametric biobjective programs.
	
	Our structural investigations result in two algorithms. The first algorithm uses any of the existing multi-objective programming algorithms and solves linear programs to determine breakpoints of the parametric problem. In the second algorithm, we adapt weighted sum decomposition algorithms. 
	
	This article is structured as follows: Definitions, terminology and structural results related to multi-objective and parametric optimization are presented in Section~\ref{sec2}.
	Then, we briefly introduce the weight set decomposition and explain the connection between its structure and the set of nondominated images of MOLP. Section~\refeq{sec3} considers two cases of the linear parametric biojective program and we present our results, establishing a connection between parametric biobjective problems and the corresponding triobjective problems. %We use the relation to the weighted sum scalarization of the triobjective problem to solve the linear parametric biobjective problems. 
	As a result, we present two algorithms in Section~\ref{sec4}: The \emph{Breakpoint Enumeration Algorithm} relies on a given set of extreme nondominated images of the corresponding triobjective linear program, and the \emph{Adapted Weight Set Decomposition} modifies existing weight set decomposition algorithms. 
	
	\section{Preliminaries}\label{sec2}
	
	We introduce basic definitions and some concepts of linear parametric programming as well as multi-objective programming. It is followed by some important results related to the weighted sum scalarization. In particular, we recapitulate results concerning the so-called weight set, i.\,e.~the structure of the set of weights needed for the weighted sum scalarization.
	
	We use the following variants of the componentwise ordering to compare objective function vectors and to establish a notion of optimality. For $y^1, y^2 \in \mathbb{R}^k$ with $k>1$, the \emph{weak componentwise order}, the \emph{componentwise order}, and the \emph{strict componentwise order} are defined by
	\begin{align*}
		y^1 \leqq y^2 &\iff y_i^1 \leq y_i^2 \text{ for all } i = 1, \dots , k, \\
		y^1 \leq y^2 &\iff y^1 \leqq y^2 \text{ but } y^1 \neq y^2,  \\
		y^1 < y^2 &\iff y_i^1 < y_i^2 \text{ for all } i = 1, \dots , k, 
	\end{align*}
	respectively. The non-negative orthant is denoted by $\mathbb{R}_\geqq^k \coloneqq \left \{x \in \mathbb{R}^k: x \geqq 0 \right\}$ and, likewise, the sets $\mathbb{R}_\geq^k$ and $\mathbb{R}_>^k$ are defined analogously using the componentwise and strict componentwise ordering. 
	
	\subsection{Parametric programming}
	
	We consider linear programming problems having an objective function which is subject to a non-negative parameter~$\lambda$.
	\begin{definition} [\textbf{Linear Parametric Linear Program}]
		Let $\lambda \in \mathbb{R}_\geq$ be a non-negative parameter. A linear parametric linear program is defined as 
		\begin{alignat*}{2}
			& \mathrm{min}\; \quad   &c^\top& x + \lambda \cdot d^\top x \\
			& \mathrm{s.\,t.} \quad &A&x\geqq b \tag{PLP}\\
			& \quad &&x \geqq 0, 
		\end{alignat*}
		where $c, d \in \mathbb{Q}^n$ are coefficient vectors, $A \in \mathbb{Q}^{m\times n}$, $m, n \in \mathbb{N}\setminus \{0\}$, $b \in \mathbb{Q}^m$, and the feasible set is denoted by $X \coloneqq \left \{x \in \mathbb{R}^{n}: Ax\geqq b, x \geqq 0\right\}$. 
	\end{definition} 
	
	%Solving a linear POP is understood as determining an optimal solution for all values of $\lambda \geq 0$. 
	%Basically, the goal is to compute an optimal solution in the form of a function of the parameter as defined below. 
	\begin{definition}
		The \emph{optimal value function} $P: \mathbb{R}_\geq \rightarrow \mathbb{R}$ maps each specific parameter value $\lambda^* \in \mathbb{R}_\geq$ to the optimal solution value $P(\lambda^*) \coloneqq c^\top x +  \lambda^* \cdot d^\top x$, where $x$ is an optimal solution for the (non-parametric) linear program obtained from the PLP for a fixed $\lambda^*$.
	\end{definition}
	
	%In other words, the optimal value function~$P(\lambda)$ maps each parameter value $\lambda \geq 0$ to the optimal objective value of the (non-parametric) linear optimization problem induced by $\lambda$.
	%\begin{proposition}[\cite{Gusfield1983}]\label{prop1}
	%	The function $P(\lambda)$ is piecewise linear and concave in $\lambda$.
	%\end{proposition}
	
	The function~$P(\lambda)$ can be obtained as the lower envelope of all linear functions associated with basic feasible solutions of the feasible set~$X$ (see Figure~\ref{fig1}). The function $P(\lambda)$ is piecewise linear and concave in $\lambda$ \cite{Gusfield1983}. The parameter values where the slope of $P(\lambda)$ changes are called \emph{breakpoints}.
	
	The solution to a PLP involves finding a set of optimal solutions for all values of $\lambda$. The ordered breakpoints of $P(\lambda)$ are denoted by $\lambda_{1} < \lambda_{2}  < \ldots  < \lambda_{k} $ for some $k \in \mathbb{N}$.
	Murty \cite{Murty1980} establishes that the number of intervals in the parameter set, and, as a result, the number of breakpoints in the parametric linear program can be exponential in the input size.
	
	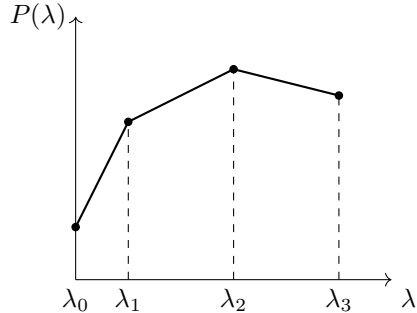
\begin{figure*}[h]
		\centering
		\begin{tikzpicture} [scale=0.7]
			\draw[->] node [below] {$\lambda_0$} (0,0) -- (6,0) node[below right] {$\lambda$};
			\draw[->] (0,0) -- (0,5) node[left]{$P(\lambda)$};
			\draw [black, thick]plot coordinates {(0,1) (1,3) (3,4) (5,3.5)};
			\draw [] plot[mark=*] coordinates {(0,1)};
			\draw [] plot[mark=*] coordinates {(1,3)};
			\draw [dashed] (1,0) node[below ] {$\lambda_1$} --(1,3);
			\draw [dashed] (3,0) node[below] {$\lambda_2$} --(3,4)  ;
			\draw [dashed]  (5,0) node[below] {$\lambda_3$}--(5,3.5);
			\draw []plot[mark=*] coordinates {(3,4)};
			\draw []plot[mark=*] coordinates {(5,3.5)};
		\end{tikzpicture}
		\caption{An illustration of an optimal value function $P(\lambda)$ with three breakpoints.\label{fig1}}
	\end{figure*}
	
	\subsection{Multi-objective optimization}
	We introduce some basic definitions and concepts of multi-objective optimization and polyhedral theory. For a detailed introduction to multi-objective optimization, we refer the reader to the book of Ehrgott~\cite{Ehrgott2005}. 
	\begin{definition} [\textbf{Multi-objective Linear Program}]
		A multi-objective linear program is defined as
		\begin{alignat*}{2}
			& \mathrm{min}\; \quad & C&x \\
			& \mathrm{s.\,t.} \quad  &A&x\geqq b \tag{MOLP} \\
			& \quad &&x \geqq 0, 
		\end{alignat*}
		where $C \in \mathbb{Q}^{k \times n}$ is the objective matrix consisting of the  rows $c_i, i = 1, \dots , k$, $A \in \mathbb{Q}^{m\times n}$, $m, n, k \in \mathbb{N}\setminus \{0\}$, $b \in \mathbb{Q}^m$. 
	\end{definition}
	We call the set $X = \left \{x \in \mathbb{R}^{n}: Ax\geqq b, x \geqq 0\right\}$ the feasible set and the set $Y \coloneqq \left \{Cx: x \in X\right\} \subseteq \mathbb{R}^k$ the image set of the MOLP. 
	For $k = 2$ and $k = 3$, a MOLP is called a biobjective linear program (BOLP) and a triobjective linear program (TOLP), respectively.
	
	As the objective functions of a MOLP are typically conflicting, there is usually no single optimal solution~$x$ that minimizes all individual objectives~$c_i x$ for all $i = 1, \dots, k$ simultaneously.
	
	%Based on these orderings, we use the concept of Pareto-optimality as the optimality concept for the MOLP. More precisely, we consider the following notion of optimality from Ehrgott \cite{Ehrgott2005}.
	
	\begin{definition} [\textbf{Efficiency and nondominance}]
		Let $x^* \in X$ be a feasible solution with corresponding image $y^* = Cx^*$. Then, $y^* \in Y$ is nondominated (weakly nondominated), if there is no other image $y \in Y$ such that $y \leq y^* (y < y^*)$. Correspondingly, $x^* \in X$ is efficient (weakly efficient), if $y^*$ is nondominated (weakly nondominated). 
	\end{definition}
	The set of nondominated (weakly nondominated) images is denoted by $Y_N$  ($Y_{wN}$) and the set of efficient (weakly efficient) solutions by $X_E$ ($X_{wE}$).
	For linear problems, the feasible set $X$ and also its image $Y = CX$ are both polyhedra. 
	We recall some basic concepts of polyhedral theory \cite{Ziegler2012}. The dimension of a polyhedron $P \subseteq \mathbb{R}^k$  is the maximum number of affinely independent points of $P$ minus one. A bounded polyhedron is called a polytope. For $w \in \mathbb{R}^k$ and $t\in \mathbb{R}$, the inequality $w^\top y \leq t$ is called valid for $P$ if $P \subseteq \left \{y \in \mathbb{R}^k : w^\top y \leq t \right\}$. A set $F \subset P$ is a face of $P$ if there is some valid inequality $w^ \top y \leq t$ such that $F = \left \{y \in P : w^\top y =t\right\}$. An extreme point is a face of dimension 0, an edge is a face of dimension 1 and a facet is a face of dimension $k-1$.
	
	In MOLP, the set of nondominated images is a convex hull defined by the \emph{extreme nondominated images}. An image $y \in Y$ is an extreme nondominated image if $y \in Y_{N}$ and $y$ is an extreme point of $Y$. We denote the set of extreme nondominated images by $Y_\en$. Consequently, the search for all nondominated images can be restricted to finding a set of efficient solutions that correspond to such extreme nondominated images.
	\begin{definition}
		A \emph{solution set} $S\subseteq X$ of a MOLP is a set such that for every $y \in Y_\en$ there is an $x \in S$ such that $Cx = y$.
		It is called \emph{minimal} if, additionally, there is no other solution set $S' \subseteq X$ with $\left| S' \right|< \left| S \right|$.
	\end{definition}
	
	\subsection{Weight set decomposition}
	Scalarization methods are useful techniques to compute nondominated images of a MOLP. A scalarization method transforms a multi-objective linear program into a single objective problem in a structured way by using additional parameters, constraints, or reference points. One commonly used scalarization method is the weighted sum scalarization. %It uses a conic combination of objective functions resulting in a scalar-valued single objective optimization problem.
	\begin{definition}
		Consider a MOLP and let a non-negative vector of weights~$w \in \mathbb R^k_\geq$ be given. The weighted sum scalarization problem of the MOLP with respect to weight~$w$ is the single objective linear program
		\begin{align*}
			\mathrm{\min_{x \in X}} \quad \ w^\top Cx \tag{$\ws(\mathrm{MOLP},w)$}.
		\end{align*}
	\end{definition}
	
	%Optimal solutions to $\ws(MOLP,w)_w$ are (weakly) efficient for the $MOLP$ if $ w \in { \mathbb{R}_>^q} (w \in { \mathbb{R}^q_\geq})$ and, vice versa, for every (weakly) efficient solution~$x$ of the $MOLP$, a suitable weight~$w$ can be chosen such that $x$ is optimal for $\ws(MOLP,w)_w$. This fact is precisely stated in the following theorem.
	%All optimal solutions of a weighted sum scalarization problem are efficient \cite{Geoffrion1968}. 
	A feasible solution that is optimal for some weighted sum scalarization problem is called \emph{supported} and a feasible solution whose image is uniquely optimal for some weighted sum scalarization problem is called \emph{extreme supported} \cite{Ehrgott2005}. 
	For linear programs, all extreme nondominated images are supported. Consequently, the set of extreme nondominated images of a MOLP, $Y_\en$ coincides with the set of extreme supported nondominated images.  
	
	\begin{theorem}[Isermann \cite{Isermann1974}]\label{thm1}
		A feasible solution $x^* \in X$ is an efficient solution of the MOLP if and only if there exists some $w \in \mathbb{R}_{>}^{k}$ such that $w^\top C x^* \leq w^\top Cx$ for all $x \in X$.
	\end{theorem}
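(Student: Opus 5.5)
The converse direction is routine; the real work is the forward direction, which I would prove by a separation argument in image space.

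\emph{Sufficiency.} Suppose $w \in \mathbb{R}^k_>$ satisfies $w^\top Cx^* \le w^\top Cx$ for all $x\in X$, but $x^*$ is not efficient. Then there is some $x\in X$ with $Cx \leq Cx^*$, so $Cx^* - Cx \in \mathbb{R}^k_\geq$. Since every component of $w$ is strictly positive, $w^\top(Cx^*-Cx) > 0$. This contradicts the optimality of $x^*$ for the weighted sum.

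\emph{Necessity.} Let $x^*\in X_E$ and $y^* = Cx^*$. I would proceed in two steps.

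First, I would show that $x^*$ is optimal for an auxiliary linear program in which all objectives are summed. Consider
\[
\max\; \mathbf{1}^\top s \quad \text{s.t.}\quad Cx + s = Cx^*,\; Ax \geqq b,\; x\geqq 0,\; s\geqq 0 .
\]
Any feasible point with $s \neq 0$ would give $Cx \leq Cx^*$, contradicting efficiency. Hence every feasible point has $s=0$, the optimal value is $0$, and the LP is bounded and attained at $(x^*,0)$.

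Second, I would extract the weights from LP duality. Introduce dual variables $u\in\mathbb{R}^k$ for the equality constraints and $v\in\mathbb{R}^m_\geqq$ for $Ax\geqq b$. Feasibility of the dual then amounts to the following conditions:
\begin{itemize}
\item the inequality $u \geqq \mathbf{1}$, coming from the $s$-columns;
\item the inequality $C^\top u - A^\top v \geqq 0$, coming from the $x$-columns;
\item the equation $u^\top Cx^* - b^\top v = 0$, which is equal optimal values.
\end{itemize}
Set $w \coloneqq u$. Then $w\geqq \mathbf 1$, so $w\in\mathbb{R}^k_>$. For any $x\in X$ we get the chain
\[
w^\top Cx \ge v^\top Ax \ge v^\top b = w^\top Cx^*.
\]
The first inequality uses $x\geqq 0$ together with $C^\top u - A^\top v \geqq 0$. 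The second uses $v\geqq 0$ together with $Ax\geqq b$. This is exactly the required inequality $w^\top Cx^* \le w^\top Cx$.

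\emph{Main obstacle.} The delicate point is getting the signs right in the dual of the auxiliary LP, so that the strict positivity of $w$ really follows from the $s$-columns. If the duality bookkeeping proves messy, I would instead separate the polyhedron $Y+\mathbb{R}^k_\geqq$ from the translated cone $y^*-\mathbb{R}^k_\geq$. Polyhedrality is what guarantees a strictly positive normal here, and it is the step that genuinely uses linearity: the theorem fails for general convex problems.
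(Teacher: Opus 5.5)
Your proof is correct. The paper gives no proof of this statement; it is quoted from Isermann. Your argument is the standard LP-duality proof found in the textbook literature, e.g.\ Ehrgott's book, which the paper cites for background.

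The auxiliary problem $\max \mathbf{1}^\top s$ has optimal value $0$ precisely because $x^*$ is efficient. Your dual conditions have the correct signs for the form $Ax \geqq b$ used here: $u \geqq \mathbf{1}$ from the $s$-columns, $C^\top u - A^\top v \geqq 0$ from the $x$-columns, $v \geqq 0$, and zero duality gap. So the chain $w^\top Cx \ge v^\top Ax \ge b^\top v = w^\top Cx^*$ holds with $w = u \geqq \mathbf{1}$.

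The fallback route you mention is not needed. If you did use it, ordinary separation only yields $w \geq 0$. To obtain $w \in \mathbb{R}^k_>$ you would need a Stiemke/Tucker-type theorem of alternatives applied to the polyhedral cone generated by $Y + \mathbb{R}^k_\geqq - y^*$.
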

	
	By Isermann's theorem, any efficient solution to MOLP can be found as an optimal solution of WS(MOLP,w) for some properly chosen weight $w$. Therefore, we are interested in the set of all weights and, also, its structure. 
	
	\begin{definition}
		The \emph{(normalized) weight set} is denoted by $\w$ and is defined as
		\begin{align*}
			\w \coloneqq \left \{w \in \mathbb{R}_{\geqq}^{k}: \sum_{i=1}^{k} w_i =1 \right\}.
		\end{align*}
	\end{definition}
	We consider the normalized weight set because normalization of the weight vectors does not affect the optimality of a solution obtained by weighted sum scalarization, i.\ e., $x$ is optimal for WS(MOLP,w) with $w \in \mathbb{R}^k_\geq$, if and only if $x$ is optimal for $w' \coloneqq \frac{1}{\sum_{i=1}^{k} w_i} w \in \w$.
	
	There exists a bijection between $\w$ and the weight set $\left \{w \in \mathbb{R}_{\geqq}^{k-1}: \sum_{i=1}^{k-1} w_i \leq1 \right\}$ and the set $\w$ is a polytope of dimension $k-1$.  So, in case of a weighted sum scalarization of a triobjective linear program, it is sufficient to consider the projection of $(w_1, w_2, w_3)$ onto $(w_1, w_2)$ because $w_3$ is uniquely determined by the values of $w_1$ and $w_2$, i.\ e. $w_3 = 1-w_1-w_2$.
	%and there is a bijection between $\w$ and the set $\left \{ \w \in \mathbb{R}_{\geq}^{q} : \sum_{i=1}^{q} w_i =1 \right\}$.
	A \emph{weight set decomposition} is the subdivision of the weight set into so-called \emph{weight set components}.
	All weights in a single weight set component map to the same nondominated image.
	For every $y \in Y $, the weight set component of $y$ is denoted by $ \w (y)$, and defined as 
	\begin{align*} 
		\w (y)\coloneqq \left \{ w \in \w : w^\top y = \mathrm{min} \left \{w^\top y' : y' \in Y\right\}\right\}.
	\end{align*}
	A weight set component $\mathcal W(y)$ consists of the subset of weights~$w \in \w$ for which the corresponding nondominated image~$y$ is an optimal value of $\ws(\mathrm{MOLP},w)$. A weight~$w$ is called an \emph{extreme weight} if it is an extreme point of $\w(y)$.  Important properties of these weight set components are presented by Przybylski et al.~\cite{Przybylski2010}. 
	
	\newcommand{\propitemref}[2]{\ref{#1}.\ref{#2}}
	\begin{proposition}[Przybylski et al. \cite{Przybylski2010}]\label{prop1}
		Let \( y \in Y_{N} \). Then, the following statements hold:
		\begin{enumerate}[label=\roman*.]
			\item	\(\w(y) = \left \{ w \in \w : w^\top y \leq w^\top y' \text{ for all } y' \in Y_\en \right\}\).  \label{prop1:partone} 
			\item	\(\w(y)\) is a convex polytope. \label{prop1:parttwo} 
			\item A nondominated image \( y \) is an extreme nondominated image of \( Y \) if and only if \(\w(y)\) has dimension \( k-1 \).\label{prop1:partthree} 
		\end{enumerate}
	\end{proposition}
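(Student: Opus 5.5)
We prove the three parts in order. Parts i and ii support each other, and part iii is the only substantial step.

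For part i, the inclusion ``$\supseteq$'' is the only nontrivial direction, since $Y_\en \subseteq Y$ gives ``$\subseteq$'' at once. Fix $w \in \w$ with $w^\top y \le w^\top y'$ for all $y' \in Y_\en$; we must show that $w^\top y \le w^\top \bar y$ for every $\bar y \in Y$.

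We use two standard facts. First, since $Y$ is a polyhedron and $Y_N \neq \emptyset$, we have $Y + \mathbb{R}^k_\geqq \subseteq \mathrm{conv}(Y_\en) + \mathbb{R}^k_\geqq$. This is the Yu--Zeleny type decomposition recalled in the introduction: the Pareto part of a polyhedron is generated by its extreme nondominated points, and the recession directions relevant for a nonempty $Y_N$ lie in $\mathbb{R}^k_\geqq$ up to directions that $w \geqq 0$ cannot decrease along. Second, $w \geqq 0$.

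Hence every $\bar y \in Y$ can be written as $\bar y \geqq \sum_j \mu_j y^j$ with $y^j \in Y_\en$ and $\mu$ convex weights. It follows that $w^\top \bar y \ge \sum_j \mu_j w^\top y^j \ge w^\top y$. The one delicate point is the first fact, namely that $\min_{Y} w^\top\cdot$ is finite for every $w \in \w$ once $Y_N \ne \emptyset$. I would argue it via the recession cone: a recession direction $r$ of $Y$ with $w^\top r<0$ for some $w \geqq 0$ has a negative component. If I cannot settle this cleanly, I will simply assume it, as is standard in the weight set literature.

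Part ii then follows from i. The set $\w(y)$ is the intersection of the simplex $\w$ with finitely many closed halfspaces $\{w : w^\top (y - y')\le 0\}$, $y' \in Y_\en$, because a polyhedron has finitely many extreme points. A bounded intersection of finitely many halfspaces is a convex polytope.

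For part iii, suppose first that $y$ is an extreme point of $Y$ lying in $Y_N$. Then $y$ is the unique minimizer of some linear functional $v^\top\cdot$ over $Y$. The main obstacle is to produce such a $v$ with $v \in \mathbb{R}^k_>$, normalized into the relative interior of $\w$. I would take the normal cone of $Y$ at $y$. It is full-dimensional, since $y$ is a vertex. By Isermann's Theorem~\ref{thm1} it contains some $w>0$. Because the normal cone of a vertex is full-dimensional, a small perturbation of $w$ stays in its interior, and points in that interior make $y$ the unique minimizer. Intersecting this open cone with the hyperplane $\sum w_i=1$ gives an open subset of $\w$ contained in $\w(y)$, so $\dim \w(y)=k-1$.

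Conversely, if $\dim\w(y)=k-1$, pick $w$ in the relative interior of $\w(y)$. Suppose $y$ were not extreme. Then $y$ would lie in the relative interior of some face $F$ of $Y$ with $\dim F \ge 1$, so it could be written as $y = \tfrac12(y^1+y^2)$ with $y^1 \ne y^2$ in $Y$. Every $w' \in \w(y)$ would then satisfy $w'^\top(y^1-y^2)=0$, which confines $\w(y)$ to a hyperplane. This hyperplane differs from $\{\sum w_i = 1\}$ unless $y^1-y^2$ is parallel to $\mathbf 1$, and that case is impossible because two such points would dominate one another, contradicting $y\in Y_N$. So $\dim \w(y)\le k-2$, a contradiction.
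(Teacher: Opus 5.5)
The paper gives no proof of this proposition; it only cites Przybylski et al. Your argument therefore has to stand on its own. Parts ii and iii are essentially fine. Part i has a genuine gap, located exactly at the step you flagged.

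\textbf{The gap in part i.} You claim that $Y_N\neq\emptyset$ forces $Y+\mathbb{R}^k_\geqq\subseteq\mathrm{conv}(Y_\en)+\mathbb{R}^k_\geqq$, or equivalently that $\min_{y'\in Y}w^\top y'$ is finite for every $w\in\w$. This is false, and so is part i itself without an extra hypothesis. Nondominance only rules out nonzero recession directions in $-\mathbb{R}^k_\geqq$. Mixed-sign directions survive, so your observation that a direction with $w^\top r<0$ has a negative component yields no contradiction.

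For a counterexample, take $k=n=2$, $X=\mathbb{R}^2_\geqq$ (with $A=0$, $b=0$) and $Cx=(x_2,\,x_1-x_2)^\top$. Then $Y=\{y: y_1\ge 0,\ y_1+y_2\ge 0\}$. Its only extreme point, $0$, is nondominated, so $Y_\en=\{0\}$ and the right-hand side of i is all of $\w$. Yet $\w(0)=\{w\in\w: w_1\ge w_2\}$, because for $w_2>w_1$ the ray $t(1,-1)^\top\in Y$, $t\ge0$, makes $w^\top y'$ unbounded below.

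So the fact cannot be derived; it must be assumed. The needed assumption is that the MOLP is bounded: each $c_ix$ is bounded below on $X$, equivalently the recession cone of $Y$ lies in $\mathbb{R}^k_\geqq$. This is tacit in the paper. Under this hypothesis your decomposition argument works, and the fact has a short proof. The set $\mathcal P\coloneqq Y+\mathbb{R}^k_\geqq$ is a polyhedron with recession cone $\mathbb{R}^k_\geqq$, hence $\mathcal P=\mathrm{conv}(\mathrm{vert}\,\mathcal P)+\mathbb{R}^k_\geqq$. Every vertex $v$ of $\mathcal P$ lies in $Y_\en$. Indeed, if $v=\bar y+d$ with $\bar y\in Y$ and $0\neq d\geqq 0$, then $v$ is the midpoint of $\bar y+\tfrac12 d$ and $\bar y+\tfrac32 d$, both in $\mathcal P$. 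Applying this to every such representation gives $v\in Y$ and $v\in Y_N$. Finally, $v$ is extreme in the smaller set $Y\subseteq\mathcal P$.

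\textbf{Smaller remarks.} Part ii does not need part i, and hence not the boundedness hypothesis. Write $N_Y(y)=\{v: v^\top y\le v^\top y' \text{ for all } y'\in Y\}$; then $\w(y)=\w\cap N_Y(y)$. For a polyhedron this cone is generated by the inward normals of the constraints active at $y$, so $\w(y)$ is a polytope unconditionally. Your proof of iii likewise never uses i.

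In the forward direction of iii, a small perturbation of $w$ need not stay in $N_Y(y)$ when $w$ lies on its boundary. Instead, pick $u$ in the interior of $N_Y(y)$ and use $(1-\varepsilon)w+\varepsilon u$. This point is interior for $\varepsilon\in(0,1]$ and still strictly positive for small $\varepsilon$.

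In the converse, just note that $\{w: w^\top(y^1-y^2)=0\}\cap\{w: \mathbf{1}^\top w=1\}$ is either empty or $(k-2)$-dimensional. Either way $\dim\w(y)\le k-2$, so excluding the parallel case separately is unnecessary.
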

	Proposition~\propitemref{prop1}{prop1:partone} implies that it is sufficient to consider only extreme nondominated images to define weight set components. Proposition~\propitemref{prop1}{prop1:partthree} implies that the weight set components corrposponding to the extreme nondominated images are full-dimensional polytopes in $\mathbb{R}_{\geqq}^{k-1}$.
	
	We observe from these properties that the structure of the weight set is closely linked to the structure of the nondominated image set of a MOLP by a one-to-one mapping between the weight set components and the extreme nondominated images.
	
	\section{Parametric biobjective linear programs}\label{sec3}
	In this section, we consider a parametric multi-objective linear program with two objective functions dependent on a single parameter $\lambda \in \mathbb{R}_\geq$.
	
	\begin{definition}[\textbf{Parametric Biobjective Linear Program}]
		Let $\lambda \in \mathbb{R}_\geq$ be a parameter. A parametric biobjective linear program is defined as
		\begin{alignat*}{2}
			& \mathrm{min}\; \quad &  C&x + \lambda Dx\\
			& \mathrm{s.\,t.} \quad  & A&x=b, \tag{PBLP} \\
			& \quad &&x \geqq 0, 
		\end{alignat*} 
		where $C, D \in \mathbb{Q}^ {2 \times n}$ consist of rows $c_i, d_i\ i=1,2$, $A \in \mathbb{Q}^{m\times n}$, $m, n \in \mathbb{N}\setminus \{0\}$, $b \in \mathbb{Q}^m$. Again, the feasible set is denoted by $X \coloneqq \left \{x \in \mathbb{R}^{n}: Ax=b, x \geqq 0\right\}$.
	\end{definition}
	
	%As mentioned in the introduction, parametric biobjective linear programs have not yet gained much attention. So, the motivation is to formulate a general PBLP and investigate different approaches to solve these problems. 
	As the literature frequently adopts a case-based approach, see \cite{Sitarz2008}, we focus on two special cases of the general PBLP with a parametric dependency in either one or both objectives. In the first case, only one of the two objectives depends linearly on the parameter. In the second case, we consider the same linear parametric dependency in both objectives. More precisely, we consider the following two special cases.\\
	Case I : One parametric objective
	\begin{alignat*}{2}
		& \mathrm{min}\; \quad &&	\begin{pmatrix}
			\, c_1 x + \lambda d_1 x\\
			\, c_2x
		\end{pmatrix}\\
		& \mathrm{s.\,t.} \quad  && x \in X. \tag{$\pblp^1$} 
	\end{alignat*}
	Case II :  Same parameter in both objectives
	\begin{alignat*}{2}
		& \mathrm{min}\; \quad && 	\begin{pmatrix}
			\, c_1 x+ \lambda d_1 x\\
			\, c_2 x+ \lambda d_1 x
		\end{pmatrix}\\
		& \mathrm{s.\,t.} \quad  && x \in X. \tag{$\pblp^2$}
	\end{alignat*} 
	
	%For each $\lambda \in \mathbb{R}_\geq$ we solve the corresponding biobjective optimization problem and find the set of efficient solutions which represents the nondominated images in the image set. 
	
	For notational consistency and to avoid repetition, we adopt the convention of denoting both cases of PBLP together as $\pblp^j$ for $j = 1, 2$ collectively when the results hold for both problems.
	%We consider the set of efficient solutions for our problems $\pblp^j$ instead of the nondominated image set because the two-dimensional nondominated image set of $mathrm{PBLP}^j(\lambda)$ is constantly changing with respect to $\lambda$.
	% Therefore, the solution to $mathrm{PBLP}^j$ involves solving the corresponding non-parametric biobjective linear program for each $\lambda \in \mathbb{R}_{\geq}$ and identifying a corresponding set of efficient solutions. 
	
	The problem $\pblp^j$, for some fixed value of $\lambda$, is a non-parametric biobjective linear program, and we denote it by $\pblp^j(\lambda)$. Furthermore, we denote the set of extreme non-dominated images of $\pblp^j(\lambda)$ by $Y_\en(\pblp^j(\lambda))$ and a corresponding minimal solution set by $S(\pblp^j(\lambda))$. 
	With some abuse of notation, we write $S(\pblp^j(\lambda_1))=S(\pblp^j(\lambda_2))$ if $\pblp^j(\lambda_1)$ and $\pblp^j(\lambda_2)$ share a minimal solution set, and $S(\pblp^j(\lambda_1))\neq S(\pblp^j(\lambda_2))$ if they do not.
	Our analysis relates to a minimal solution set that corresponds to extreme nondominated images of $\pblp^j(\lambda)$. Therefore, a minimal solution set of $\pblp^j$ equates to a set that contains optimal solutions of $\pblp^j(\lambda)$ for each fixed value of $\lambda \geq 0$. We denote this set by $S$. 
	
	\begin{definition}
		A \emph{solution set} $S \subseteq X$ of $\pblp^j$ is a set such that for every $\lambda \geq 0$, $S$ contains, as a subset, a solution set for the BOLP $\pblp^j(\lambda)$.
		It is called \emph{minimal} if, additionally, there is no other solution set $S' \subseteq X$ for $\pblp^j$ with $\left| S' \right|< \left| S \right|$.
	\end{definition}
	
	We will relate $\pblp^j$ to the corresponding triobjective linear program using the weighted sum scalarization method. To this end, we consider the triobjective linear problem with the objective functions $c_1 x, c_2 x$, and $d_1 x$, i.\ e. 
	\begin{alignat*}{2}
		& \mathrm{\min}\; \quad && \left( c_1 x , c_2 x, d_1 x \right) ^\top \\
		& \mathrm{s.\ t.} \quad && x \in X, \tag{TOLP}
	\end{alignat*}
	and denote its set of extreme nondominated images by $Y_\en(\tolp)$. We define the weight set $\w(\tolp)$ of TOLP as
	\begin{align*}
		\w(\tolp) \coloneqq \left\{w^* \in \mathbb{R}_{\geqq}^{3}: \sum_{i=1}^{3} w^*_i =1 \right\}.
	\end{align*}
	Note that we consider the projection of $w^* \in \mathbb{R}_{\geqq}^{3}$ in $\mathbb{R}_{\geqq}^{2}$ if necessary, and both weight sets, whether its dimension is 3 or 2, are denoted by $\w(\tolp)$. 
	\noindent The weighted sum scalarization of $\tolp$ with a normalized weight $w^* \in \w(\tolp)$ is 
	\begin{align*} 
		\mathrm{\min_{x \in X}}\ w^*_1 c_1 x + w^*_2c_2 x + w^*_3 d_1 x.
		\tag{$\ws(\tolp, w^*)$}
	\end{align*}
	%We will be using this weighted sum scalarization in our analysis in both the cases.

	\subsection{Case I : One parametric objective}
	
	We approach the problem by applying the weighted sum scalarization to the parametric biobjective linear program $\pblp^1$ and formally characterize its relationship to the corresponding triobjective linear program in our results. 
	
	\noindent For a fixed value of $\lambda$, the weighted sum scalarization of $\pblp^1$ is 
	\begin{align*} 
		\mathrm{\min_{x \in X}}\ w_1 (c_1 x + \lambda d_1 x) + w_2 c_2 x \tag{$\ws(\pblp^1(\lambda), w)$}
	\end{align*}
	\noindent where $w \in\mathbb{R}_{\geqq}^{2}$ and $w_1 +w_2 = 1$. \\
	We reformulate this problem using $w_2 = 1- w_1$ and obtain
	\begin{align*} 
		\mathrm{\min_{x \in X}}\ w_1 c_1 x  + (1-w_1) c_2 x + w_1 \lambda d_1 x.
	\end{align*}
	
	\noindent The problem can be interpreted as a weighted sum scalarization problem of the TOLP with weight vector $\left(w_1, 1-w_1, w_1 \lambda\right)$.
	It holds that 
	\begin{align*} 
		w_1 + w_1 \lambda + (1-w_1) = 1 + w_1 \lambda \geq 1.
	\end{align*}
	We normalize the weights and get
	\begin{align} 	\label{eqn1}
		\mathrm{\min_{x \in X}}\ \frac{w_1}{1+ w_1 \lambda}(c_1 x)  + \frac{w_2}{1+ w_1 \lambda} (c_2 x)+ \frac{w_1 \lambda }{1+ w_1 \lambda}(d_1 x),
	\end{align}
	which is a particular case of $\ws(\tolp, w^*)$ with the corresponding weights
	\begin{align*}
		w^* = \left( \frac{w_1}{1+w_1 \lambda},  \frac{w_2}{1+w_1 \lambda},  \frac{w_1 \lambda}{1+w_1 \lambda}\right)^\top.
	\end{align*}
	
	%\subsubsection{Structural results for $\pblp^1$}
	We now present our main result, which establishes the equivalence between the efficient solutions of the problem $\pblp^1$ and the efficient solutions of the corresponding triobjective linear program.
	% This result is formalized in the following theorem.
	\begin{theorem}\label{thm2}
		A feasible solution $x^*$ is optimal for $\ws(\tolp, w^*)$ with non-negative weights $w_1^*$, $w_2^*$, $w_3^*$, where $w_1^* > 0$ if and only if there exist a parameter $\lambda \geq 0$ and non-negative weights $w_1$, $w_2$ where $w_1 > 0$ such that $x^*$ is optimal for $\ws(\pblp^1(\lambda), w)$.
	\end{theorem}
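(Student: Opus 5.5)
The plan is to prove both directions by exhibiting explicit maps between the two families of weights, and then using that optimality under a weighted sum is invariant under multiplication of the objective by a positive constant. The reformulation leading to \eqref{eqn1} already gives one of these maps.

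\emph{($\Leftarrow$)} Suppose $x^*$ is optimal for $\ws(\pblp^1(\lambda), w)$ with $\lambda \geq 0$, $w_1 > 0$, $w_2 \geq 0$. After normalizing we may assume $w_1 + w_2 = 1$; this is harmless because dividing the objective by $w_1+w_2>0$ does not change the set of optimal solutions. The computation preceding \eqref{eqn1} shows that the objective of $\ws(\pblp^1(\lambda), w)$ equals $(1+w_1\lambda)$ times the objective of $\ws(\tolp, w^*)$, where
\[
w^* = \left(\tfrac{w_1}{1+w_1\lambda}, \tfrac{w_2}{1+w_1\lambda}, \tfrac{w_1\lambda}{1+w_1\lambda}\right)^\top .
\]
Since $1 + w_1\lambda \geq 1 > 0$, the minimizers of the two problems coincide, so $x^*$ is optimal for $\ws(\tolp, w^*)$. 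It then suffices to check that $w^* \in \w(\tolp)$ and that $w_1^* > 0$. Both are immediate: the entries are nonnegative, they sum to $1$, and $w_1^* = w_1/(1+w_1\lambda) > 0$.

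\emph{($\Rightarrow$)} Conversely, let $x^*$ be optimal for $\ws(\tolp, w^*)$ with $w^* \geq 0$ and $w_1^* > 0$; we may take $\sum_i w_i^* = 1$. Here we have to invert the map. Set $\lambda := w_3^*/w_1^* \geq 0$, which is well defined precisely because $w_1^* > 0$; this is the only place the hypothesis $w_1^* > 0$ is used. Then put $w_1 := w_1^*/(w_1^*+w_2^*)$ and $w_2 := w_2^*/(w_1^*+w_2^*)$, where $w_1^*+w_2^* \geq w_1^* > 0$. With these choices,
\[
w_1(c_1x + \lambda d_1 x) + w_2 c_2 x = \frac{1}{w_1^*+w_2^*}\left(w_1^* c_1 x + w_2^* c_2 x + w_3^* d_1 x\right).
\]
Hence the objectives again differ only by a positive factor, and $x^*$ is optimal for $\ws(\pblp^1(\lambda), w)$ with $w_1 > 0$. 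One can also verify that this choice reproduces $w^*$ under the forward map, since $1 + w_1\lambda = (w_1^*+w_2^*+w_3^*)/(w_1^*+w_2^*) = 1/(w_1^*+w_2^*)$. That identity is not needed for the proof, however.

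No step is a serious obstacle. The only care needed is in the bookkeeping of the positivity conditions: $w_1^* > 0$ is needed to define $\lambda$ and to keep the normalizing denominators nonzero, and $w_1 > 0$ is what guarantees $w_1^* > 0$. Note that weights with $w_1^* = 0$ and $w_3^* > 0$ cannot be reached by any finite $\lambda$, which is why the theorem excludes them.
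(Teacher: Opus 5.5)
Your proof is correct and follows the same approach as the paper. In both, the weights are matched explicitly via $\lambda = w_3^*/w_1^*$ in one direction and $w^* \propto (w_1, w_2, w_1\lambda)$ in the other, and the two weighted-sum objectives then agree up to a positive factor; the paper phrases this as a contradiction argument. The only difference is normalization: you divide by $w_1^*+w_2^*$ and by $1+w_1\lambda$, whereas the paper sets $w_1 = w_1^*$, $w_2 = w_2^*$, $w_3^* = w_1\lambda$ without normalizing, which tacitly relies on the scaling invariance you state explicitly.
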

	\begin{proof}
		We first show that $x^*$ is optimal for $\ws(\tolp, w^*)$, implying that there exist $\lambda \geq 0$ and non-negative weights $w_1$, $w_2$ such that $x^*$ is optimal for $\ws(\pblp^1(\lambda), w)$.  Let $x^*$ be optimal for $\ws(\tolp, w^*)$. \\
		We define
		\begin{align}
			\lambda & \coloneqq    \frac{w^*_3}{w^*_1}\nonumber\\
			w_1 & \coloneqq w^*_1 	\label{eqn2}\\
			w_2 & \coloneqq w^*_2.\nonumber
		\end{align}
		Suppose $x^*$ is not optimal for $\ws(\pblp^1(\lambda), w)$. Then there exists some $x'$ that is feasible for $\ws(\pblp^1(\lambda), w)$ where $x' \neq x^*$ such that
		\begin{align*}
			w_1 c_1 x' + w_2 c_2 x'+w_1 \lambda d_1 x' & < w_1 c_1 x^* + w_2 c_2 x^* + w_1 \lambda  d_1 x^*.\\
			\intertext{We plug in (\ref{eqn2}) to get}
			w^*_1 c_1 x' + w^*_2 c_2 x' +w^*_1 ( \frac{w^*_3}{w^*_1} d_1 x') &<  w^*_1 c_1 x^* + w^*_2 c_2 x^* + w^*_1( \frac{w^*_3}{w^*_1} )d_1 x^*.\\
			\intertext{This is equivalent to}
			w^*_1 c_1 x' + w^*_2 c_2 x' + w^*_3 d_1 x' &< w^*_1 c_1 x^* + w^*_2  c_2 x^* + w^*_3 d_1 x^*.
		\end{align*}
		This leads to a contradiction to $x^*$ being optimal for $\ws(\tolp, w)$. \\
		Conversely, let $x^*$ be optimal for $\ws(\pblp^1(\lambda), w)$ with non-negative weights $w_1$, $w_2$ and for some non-negative $\lambda$. \\
		We define  $w_1^*$, $w_2^*$, $w_3^*$ using (\ref{eqn2}),
		\begin{align}
			w^*_1 & \coloneqq w_1 \nonumber\\
			w^*_2 & \coloneqq w_2 \label{eqn3}\\
			w^*_3 & \coloneqq w_1 \lambda. \nonumber
		\end{align}
		Suppose $x^*$ is not optimal for $\ws(\tolp, w^*)$. Then there exists $x'$ that is feasible for $\ws(\tolp, w^*)$ where $x' \neq x^*$ such that
		\begin{align*} 
			w^*_1 c_1 x' + w^*_2 c_2 x' + w^*_3 d_1 x' &< w^*_1 c_1 x^* + w^*_2  c_2 x^* + w^*_3 d_1 x^*.\\
			\intertext{We plug in (\ref{eqn3}) to get}
			w_1 c_1 x' + w_2 c_2 x' +  w_1 \lambda d_1 x' &<  w_1 c_1 x^* + w_2 c_2 x^* +  w_1 \lambda d_1 x^*.
		\end{align*}
		This leads to a contradiction that $x^*$ is optimal for $\ws(\pblp^1(\lambda), w)$.
	\end{proof}
	
	Observe that we assume $w_1^* > 0$ and $w_1 >0$ in Theorem~\ref{thm2}.
	Based on the construction in the proof, the parameter $\lambda$ is undefined if $w_1^* =0$, and $w_1^*$ approaching $0$ is equivalent to the parameter $\lambda$ approaching $\infty$.
	And indeed, the one-to-one correspondence from Theorem~\ref{thm2} does not hold if $w_1^* =0$:
	there exist $w^*\in\mathbb{R}^3_\geqq$ with $w^*_1=0$ and an optimal solution $x^*$ for  $\ws(\tolp,w^*)$ such that there is no $\lambda\geq0$ and $w\in\mathbb{R}^2_\geqq$ where $x^*$ is optimal for $\ws(\pblp^1(\lambda),w)$.
	This can be illustrated by the following example: let $(1,1,1)$, $(0,1,1)$ and $(0,0,2)$ be the extreme points of $X$, and let $c_1 x = x_1$, $c_2 x=x_2$ and $d_1 x= x_3$.
	For $w^*=(0,0,1)$, both $(1,1,1)$ and $(0,1,1)$ are optimal solutions of $\ws(\tolp,w^*)$.
	But for every choice of $\lambda\geq0$ and $w\in\mathbb{R}^2_\geqq$ with $w_1>0$, $(0,1,1)$ is a better solution than $(1,1,1)$ for $\ws(\pblp^1(\lambda),w)$.
	And for $w_1=0$, the solution $(0,0,2)$ is optimal for $\ws(\pblp^1(\lambda),w)$.
	Therefore, $(1,1,1)$ is never optimal for $\ws(\pblp^1(\lambda),w)$.\\
	In the converse case, for any given parameter value $\lambda \geq 0$ and a weight $w\in\mathbb{R}^2_\geqq$ with $w_1=0$ for $\ws(\pblp^1(\lambda), w)$, we can see that it is possible to construct a weight $w^* \in \mathbb{R}^3_\geqq$ for $\ws(\tolp, w^*)$ using (\ref{eqn3}). 
	In fact, an optimal solution $x$ for $\ws(\pblp^1(\lambda), w)$ with a weight $w\in\mathbb{R}^2_\geqq$ such that $w_1=0$ remains optimal for $\ws(\tolp, w^*)$ with the weight $w^*=(0,w_2,0)$ because $w^*_1=0$ and $w^*_3 = 0$ by construction.
	
	For every fixed value of $\lambda$, $\pblp^1(\lambda)$ is a biobjective linear program.
	Its weight set is a one-dimensional polytope. However, considering the parameter $\lambda$, we can extend this interpretation of the set of the one-dimensional weight set to a higher dimension by a mapping $\w(\pblp^1( \lambda))$, which is defined below. This extended representation enables us to derive several important theoretical results.
	
	\begin{definition}
		For a given $\lambda$, we define $\w(\pblp^1 (\lambda))$ as,
		\begin{align*}
			\w(\pblp^1 (\lambda)) \coloneqq \left \{\left(\frac{w_1}{1+w_1 \lambda}, \frac{w_2}{1+w_1 \lambda}, \frac{w_1 \lambda}{1+w_1 \lambda}\right): (w_1, w_2) \in \mathbb{R}_{\geqq}^{2}, w_1+w_2 = 1\right\}.
		\end{align*}
	\end{definition}
	
	\begin{proposition}\label{prop2}
		Let $\w(\pblp^1 (\lambda))$ and $\w(\tolp)$ be weight sets of $\pblp^1$ for a fixed value $\lambda$ and of $\tolp$, respectively. Then,it holds
		\begin{align*}
			\w(\pblp^1 (\lambda)) \subsetneq \w(\tolp).
		\end{align*}
	\end{proposition}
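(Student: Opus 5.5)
Two things need to be shown: the inclusion $\w(\pblp^1(\lambda)) \subseteq \w(\tolp)$, and that it is strict.

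\textbf{Inclusion.} Fix $\lambda \geq 0$ and take any $(w_1,w_2) \in \mathbb{R}^2_{\geqq}$ with $w_1+w_2=1$. Since $w_1 \geq 0$ and $\lambda \geq 0$, the denominator satisfies $1+w_1\lambda \geq 1 > 0$. Hence each of the three components
\[
\frac{w_1}{1+w_1\lambda}, \qquad \frac{w_2}{1+w_1\lambda}, \qquad \frac{w_1\lambda}{1+w_1\lambda}
\]
is well defined and non-negative. Their sum is
\[
\frac{w_1+w_2+w_1\lambda}{1+w_1\lambda} = \frac{1+w_1\lambda}{1+w_1\lambda} = 1,
\]
so the image point lies in $\w(\tolp)$.

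\textbf{Strictness.} I would use a structural invariant: every element $w^*$ of $\w(\pblp^1(\lambda))$ satisfies
\[
w_3^* = \lambda\, w_1^*.
\]
So for fixed $\lambda$ the set $\w(\pblp^1(\lambda))$ lies on the plane $\{w^* : w^*_3 = \lambda w^*_1\}$. Its intersection with the $2$-dimensional simplex $\w(\tolp)$ is a line segment. More concretely, $\w(\pblp^1(\lambda))$ is the image of a segment under a continuous map, so it is the segment from $(0,1,0)$ (at $w_1=0$) to $\bigl(\tfrac{1}{1+\lambda}, 0, \tfrac{\lambda}{1+\lambda}\bigr)$ (at $w_1=1$).

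To exhibit an explicit element of $\w(\tolp)$ missing from this set:
\begin{itemize}
\item If $\lambda > 0$, take $w^* = (1,0,0)$. It satisfies $w^*_3 = 0 \neq \lambda \cdot 1$.
\item If $\lambda = 0$, take $w^* = (0,0,1)$. It satisfies $w^*_3 = 1 \neq 0 = \lambda w^*_1$.
\end{itemize}
In both cases $w^* \in \w(\tolp)$ but $w^* \notin \w(\pblp^1(\lambda))$. A single uniform choice is also available: $w^* = (\tfrac12, 0, \tfrac12)$ fails the invariant unless $\lambda = 1$, so the case split above is the cleaner route.

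\textbf{Where the care is needed.} Nothing here is a real obstacle. The only points to watch are confirming the denominator is positive, which uses $\lambda \geq 0$ and $w_1 \geq 0$, and handling $\lambda = 0$ separately in the strictness argument. One could additionally remark, via a dimension count, that $\w(\pblp^1(\lambda))$ is $1$-dimensional while $\w(\tolp)$ is $2$-dimensional. That gives strictness immediately, but the explicit witness avoids having to justify the dimension claim.
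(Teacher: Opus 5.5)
Your proof is correct and follows essentially the same route as the paper. Like the paper, you check that the three components sum to $1$ (you also verify non-negativity, which the paper leaves implicit) and then exhibit an explicit weight of $\w(\tolp)$ that is not in $\w(\pblp^1(\lambda))$. The case split is unnecessary, though: your $\lambda=0$ witness $(0,0,1)$ violates $w_3^*=\lambda w_1^*$ for every $\lambda\geq 0$, since $1\neq\lambda\cdot 0$, and it is exactly the single witness the paper uses for all $\lambda$. By contrast, $(\tfrac12,0,\tfrac12)$ is not a uniform witness, because it lies in $\w(\pblp^1(1))$.
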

	\begin{proof}
		Let $w^* \in \w(\pblp^1 (\lambda))$. Then, by the definition of $\w(\pblp^1 (\lambda))$, it is
		\begin{align*}
			w^* = \left(\frac{w_1}{1+w_1 \lambda}, \frac{w_2}{1+w_1 \lambda}, \frac{w_1 \lambda}{1+w_1 \lambda}\right).
		\end{align*} 
		The sum of the components satisfies
		\begin{align*}
			\frac{w_1}{1+w_1 \lambda} + \frac{w_2}{1+w_1 \lambda} + \frac{w_1 \lambda}{1+w_1 \lambda} = \frac{1+ w_1 \lambda}{1+w_1 \lambda} = 1,
		\end{align*} 
		and, thus $w^* \in \w(\tolp)$.\\
		To show that $\w(\pblp^1 (\lambda))$ is a proper subset of $\w(\tolp)$, we consider the particular weight $w' \coloneqq (0,0,1) \in \w(\tolp)$. Since for any given $\lambda$, the third component of any $w^* \in \w(\pblp^1 (\lambda))$ satisfies
		\begin{align*}
			\frac{w_1 \lambda}{1+w_1 \lambda} \neq 1.
		\end{align*}
		Hence, weight $w' \notin \w(\pblp^1 (\lambda))$.
	\end{proof}
	
	\begin{proposition}\label{prop3}
		For two distinct parameter values $\lambda_1 < \infty$ and $\lambda_2 < \infty$, $\lambda_1 \neq \lambda_2$, it holds that
		\begin{align*}
			\w(\pblp^1(\lambda_1)) \cap \w(\pblp^1(\lambda_2)) = \{(0,1, 0)\}.
		\end{align*}
	\end{proposition}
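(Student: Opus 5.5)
We prove the two inclusions separately, working directly with the parametrization in the definition of $\w(\pblp^1(\lambda))$. Writing $w_2 = 1-w_1$ with $w_1\in[0,1]$, every element of $\w(\pblp^1(\lambda))$ has the form
\begin{align*}
\phi_\lambda(w_1) \coloneqq \left(\frac{w_1}{1+w_1\lambda}, \frac{1-w_1}{1+w_1\lambda}, \frac{w_1\lambda}{1+w_1\lambda}\right).
\end{align*}
For the inclusion $\supseteq$, note that $w_1 = 0$ gives $\phi_\lambda(0) = (0,1,0)$ for every $\lambda\geq 0$. Hence $(0,1,0)$ lies in both sets $\w(\pblp^1(\lambda_1))$ and $\w(\pblp^1(\lambda_2))$.

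For the inclusion $\subseteq$, take a common point $w^* = \phi_{\lambda_1}(a) = \phi_{\lambda_2}(b)$ with $a,b\in[0,1]$. The key observation is that, for each fixed $\lambda$, the ratio of the third to the first component is
\begin{align*}
\frac{w^*_3}{w^*_1} = \lambda \quad \text{whenever } w^*_1 > 0.
\end{align*}
This is exactly the inverse map $\lambda = w_3^*/w_1^*$ used in (\ref{eqn2}) in the proof of Theorem~\ref{thm2}. We split into two cases.

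\emph{Case $w^*_1 > 0$.} Then $a>0$ and $b>0$. The ratio computed from the $\lambda_1$-representation equals $\lambda_1$, and the ratio computed from the $\lambda_2$-representation equals $\lambda_2$. Since both describe the same point $w^*$, we get $\lambda_1 = \lambda_2$, which contradicts the assumption $\lambda_1\neq\lambda_2$.

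\emph{Case $w^*_1 = 0$.} The first component $a/(1+a\lambda_1)$ vanishes only for $a = 0$ (the denominator is at least $1$), so $a = 0$. Then $w^* = \phi_{\lambda_1}(0) = (0,1,0)$.

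Hence the intersection is exactly $\{(0,1,0)\}$. The finiteness assumption $\lambda_1,\lambda_2<\infty$ guarantees that the parametrization is well defined and that $1 + w_1\lambda > 0$. There is no real obstacle in this argument; the only point needing care is the case distinction on $w_1^* = 0$, because the ratio $w_3^*/w_1^*$ is undefined there.
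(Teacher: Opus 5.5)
Your proof is correct. It shares the paper's overall structure, but the key step for the inclusion $\subseteq$ differs, and your version is more careful.

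The paper shows $(0,1,0)$ lies in both sets exactly as you do. For $w_1>0$ it then compares the image of a pair $(w_1,w_2)$ under $\lambda_1$ with the image of the \emph{same} pair $(w_1,w_2)$ under $\lambda_2$, and observes that the components differ. This shows only that the two parametrizations disagree at equal parameter values. It does not exclude a coincidence $\phi_{\lambda_1}(a)=\phi_{\lambda_2}(b)$ with $a\neq b$. Also, the paper's middle inequality $\frac{w_2}{1+w_1\lambda_1}\neq\frac{w_2}{1+w_1\lambda_2}$ is false when $w_2=0$, although the other components still differ there.

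Your invariant $w_3^*/w_1^*=\lambda$ depends only on the point $w^*$, not on the weight that generated it. It therefore rules out such cross-parameter coincidences in one line. Your case $w_1^*=0$ correctly uses $1+a\lambda_1\geq 1$ to force $a=0$.

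An equivalent geometric phrasing uses the line description in Remark~\ref{remark1}. Each set projects into the line $w_1^*(1+\lambda)+w_2^*=1$, and two such lines through $(0,1)$ with different slopes meet only at that point. The paper's componentwise comparison is shorter but, as written, incomplete; your ratio argument closes the gap at no extra cost.
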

	\begin{proof}
		By the definition of $\w(\pblp^1 (\lambda))$, we have 
		\begin{align*}
			\w(\pblp^1 (\lambda_1)) \coloneqq \left \{\left(\frac{w_1}{1+w_1 \lambda_1}, \frac{w_2}{1+w_1 \lambda_1}, \frac{w_1 \lambda_1}{1+w_1 \lambda_1}\right):  (w_1, w_2) \in \mathbb{R}_{\geq}^{2}, w_1+w_2 = 1 \right\}
			\intertext{and}
			\w(\pblp^1 (\lambda_2)) \coloneqq \left \{\left(\frac{w_1}{1+w_1 \lambda_2}, \frac{w_2}{1+w_1 \lambda_2}, \frac{w_1 \lambda_2}{1+w_1 \lambda_2 }\right):  (w_1, w_2) \in \mathbb{R}_{\geq}^{2}, w_1+w_2 = 1\right\}.
		\end{align*}
		We first show that $(0,1, 0) \in \w(\pblp^1(\lambda_1))$ and $(0,1, 0) \in \w(\pblp^1(\lambda_2))$.\\
		If $w_1 = 0$, the corresponding mapping in $\w(\pblp^1(\lambda_1))$ is
		\begin{align*}
			\left(\frac{0}{1+0}, \frac{w_2}{1+0}, \frac{0}{1+0}\right) &= (0,1,0) \in \w(\pblp^1 (\lambda_1)) 
			\intertext{and the corresponding mapping in $\w(\pblp^1(\lambda_2))$ is}
			\left(\frac{0}{1+0}, \frac{w_2}{1+0}, \frac{0}{1+0}\right) &= (0,1,0) \in \w(\pblp^1 (\lambda_2)).
		\end{align*}
		However, for all $w_1 >0$, if we compare each corresponding pair of individual components of $w^* \in \w(\pblp^1 (\lambda_1))$ and $w' \in \w(\pblp^1 (\lambda_2))$, we get 
		\begin{align*}
			\frac{w_1}{1+w_1 \lambda_1} &\neq \frac{w_1}{1+w_1 \lambda_2}\\
			\frac{w_2}{1+w_1 \lambda_1} &\neq \frac{w_2}{1+w_1 \lambda_2}\\
			\frac{w_1 \lambda_1}{1+w_1 \lambda_1} &\neq \frac{w_1 \lambda_2}{1+w_1 \lambda_2}
		\end{align*}
		because $\lambda_1 \neq \lambda_2$. Thus, 
		\begin{align*}
			\w(\pblp^1(\lambda_1)) \cap \w(\pblp^1(\lambda_2)) = \{(0,1, 0)\}.
		\end{align*}
		
	\end{proof}
	
	\begin{figure*}[b]
		\centering
		\begin{tikzpicture}[scale=5]
			
			% Define triangle vertices
			\coordinate (A) at (1,0);
			\coordinate (B) at (0,1);
			\coordinate (C) at (0,0);
			\coordinate (D) at (0.45,0);
			\coordinate (E) at (0.33,0.3);
			
			% Draw the triangle
			\draw[thick] (A) -- (B) -- (C) -- cycle;
			
			% Draw 10 lines starting from (0,1) and ending at x-axis within triangle
			\foreach \i in {4,5,...,9} {
				\pgfmathsetmacro{\xend}{0.1 + \i*0.09} % x-intercept from 0.1 to 1.0
				\pgfmathsetmacro{\opacity}{0 + \i*0.08} % Increasing opacity for fading effect
				
				% Draw the line from (0,1) to (xend,0)
				\draw[opacity=\opacity, magenta] (0,1) -- (\xend,0);
			}
			% Optional: Add labels for the triangle vertices
			\node[right] at (A) {$w_1^*$};
			\node[above ] at (B) {$w_2^*$};
			\node[below] at (D) {$\lambda$};
			\node[left] at (E) {\tiny $\color{magenta} 	\mathcal{L}_{\w(\pblp^1(\lambda))} $};
		\end{tikzpicture}
		\caption{An illustration of \textbf{$	\mathcal{L}_{\w(\pblp^1(\lambda))} $} for different values of $\lambda$ in the weight set of TOLP.\label{fig2}}
	\end{figure*}
	
	\begin{remark} \label{remark1}
		Let $\w(\pblp^1 (\lambda))$ be the weight set of $\pblp^1(\lambda)$ and $\w(\tolp)$ be the weight set of the corresponding TOLP. Then, as a consequence of equating the weights in $\w(\pblp^1(\lambda))$ for every $\lambda$ with $\w(\tolp)$, we observe the following:
		\begin{enumerate}[label=\roman*.]
			\item \label{remark1:partone}
			$$\bigcup\limits_{\lambda \geq 0}{}   \w(\pblp^1(\lambda)) = \{ w^* \in \w(\tolp): w^*_1 \neq 0\} \cup \{(0,1,0)\}$$
			\item \label{remark1:parttwo} The projection of the weight set $\w(\pblp^1 (\lambda))$ in $\mathbb{R}^2$ can be defined as the line segment, 
			\begin{align*}
				\mathcal{L}_{\w(\pblp^1(\lambda))} = \left\{ (w^*_1, w^*_2) \mid w^* \in \w(\tolp),\ w^*_1(1 + \lambda) + w^*_2 = 1 \right\}
			\end{align*}
			\item \label{remark1:partthree} The slope $m_\lambda$ of the line segment $	\mathcal{L}_{\w(\pblp^1(\lambda))} $ is given by
			\begin{align*}
				m_\lambda = -(1+\lambda).
			\end{align*}
			\item \label{remark1:partfour}  The end points of $	\mathcal{L}_{\w(\pblp^1(\lambda))} $ are $(0, 1)$ and $\left(\frac{1}{1+ \lambda}, 0\right)$.
		\end{enumerate}
	\end{remark} 
	
	Remark~\propitemref{remark1}{remark1:partone} implies that if we vary $\lambda$ in $\pblp^1$ and project all the corresponding weight sets $\w(\pblp^1(\lambda))$ in $\mathbb{R}^2$, we obtain a nearly complete weight set decomposition of the associated TOLP except for the weights with $w_1^* =0$ due to the reason mentioned in Theorem~\ref{thm2}.
	Remark~\propitemref{remark1}{remark1:parttwo}, \propitemref{remark1}{remark1:partthree} and \propitemref{remark1}{remark1:partfour} are direct results of the definition of $\w(\pblp^1 (\lambda))$ for $w_1 \in \left[0,1\right]$ and are illustrated in Figure~\ref{fig2}. Note that we use the projection of $w^* \in \w(\tolp, w^*)$ onto $(w^*_1, w^*_2) \in \mathbb{R}_\geqq^2$ in our analysis.
	
	\subsection{Case II: Same parametric dependency on both objectives }
	
	We now consider the problem $\pblp^2$ having a non-negative parameter $\lambda$ in both objectives.\\
	\noindent For a fixed value of $\lambda$, the weighted sum scalarization of $\pblp^2$ is 
	\begin{align*} 
		\mathrm{\min_{x \in X}}\ w_1 (c_1 x + \lambda d_1 x) + w_2 (c_2 x+ \lambda d_1 x)
		\tag{$\ws(\pblp^2(\lambda), w)$}
	\end{align*}
	\noindent where $w \in \mathbb{R}_\geqq^{2}$ and $w_1 + w_2 = 1$.\\
	We reformulate this problem and obtain
	\begin{align*} 
		&\mathrm{\min_{x \in X}}\ w_1 c_1 x + w_2 c_2 x + (w_1+ w_2)  \lambda d_1 x .\\
		\intertext{Using $w_2 = 1- w_1$, this is equivalent to}
		&\mathrm{\min_{x \in X}}\ w_1 c_1 x + (1-w_1) c_2 x + \lambda d_1 x . 
	\end{align*}
	\noindent It can be interpreted as a weighted sum scalarization problem of the TOLP with a weight vector $(w_1, 1-w_1,\lambda)$. 
	Since $w_1 + w_2 = 1$, it holds that
	\begin{align*} 
		w_1 + w_2 + \lambda  = 1 +  \lambda \geq 1.
	\end{align*}
	\noindent We normalize the weights and get
	\begin{align} 	\label{eqn4}
		\mathrm{\min_{x \in X}}\ \frac{w_1}{1+\lambda} c_1 x + \frac{w_2 }{1+\lambda}c_2 x + \frac{\lambda}{1+\lambda} d_1 x
	\end{align}
	which is a particular case of $\ws(\tolp, w^*)$ with the corresponding weights
	\begin{align*}
		w^* = \left(\frac{w_1}{1+\lambda}, \frac{w_2}{1+\lambda}, \frac{\lambda}{1+\lambda} \right).
	\end{align*}
	
	%\subsubsection{Structural results for $\pblp^2$}
	We establish the equivalence between the optimal solutions of the weighted sum scalarization problem of $\pblp^2$ and the weighted sum scalarization problem of the corresponding TOLP in the following theorem.
	\begin{theorem}\label{thm3}
		A feasible solution $x^*$ is optimal for $\ws(\tolp, w^*)$ with non-negative weights $w_1^*$, $w_2^*$, $w_3^*$, where $w_3^* \neq 1$ if and only if there exists a parameter value $\lambda \geq 0$ and non-negative weights $w_1$, $w_2$, such that $x^*$ is optimal for $\ws(\pblp^2(\lambda), w)$.
	\end{theorem}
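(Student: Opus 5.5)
Following the argument for Theorem~\ref{thm2}, the plan is to give explicit maps between the two families of weights and show that, under these maps, the two objectives differ only by a positive factor. Then every optimal solution of one problem is optimal for the other. We work with normalized $w^* \in \w(\tolp)$, so $w_1^*+w_2^*+w_3^*=1$, and with $w_1+w_2=1$ as in the definition of $\ws(\pblp^2(\lambda),w)$.

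For the forward direction, take $x^*$ optimal for $\ws(\tolp,w^*)$ with $w_3^*\neq 1$. Then $1-w_3^* = w_1^*+w_2^*>0$. Define
\begin{align*}
\lambda \coloneqq \frac{w_3^*}{1-w_3^*}, \qquad w_1 \coloneqq \frac{w_1^*}{1-w_3^*}, \qquad w_2 \coloneqq \frac{w_2^*}{1-w_3^*}.
\end{align*}
These values satisfy $\lambda\geq 0$, $w_1,w_2\geq 0$ and $w_1+w_2=1$. Expanding the objective as in the derivation of (\ref{eqn4}) gives
\begin{align*}
w_1(c_1x+\lambda d_1x)+w_2(c_2x+\lambda d_1x) = w_1c_1x+w_2c_2x+\lambda d_1x = \frac{1}{1-w_3^*}\bigl(w_1^*c_1x+w_2^*c_2x+w_3^*d_1x\bigr).
\end{align*}
Suppose some $x'\in X$ had a strictly smaller value for $\ws(\pblp^2(\lambda),w)$. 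Multiplying by $1-w_3^*>0$ would give a strictly smaller value for $\ws(\tolp,w^*)$, which is a contradiction. This mirrors the proof of Theorem~\ref{thm2}.

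For the converse, take $x^*$ optimal for $\ws(\pblp^2(\lambda),w)$ with $\lambda\ge 0$ and $w_1+w_2=1$. Set
\begin{align*}
w^*\coloneqq\left(\frac{w_1}{1+\lambda},\frac{w_2}{1+\lambda},\frac{\lambda}{1+\lambda}\right).
\end{align*}
By (\ref{eqn4}), $w^*$ is nonnegative and sums to $1$. Moreover $w_3^*=\lambda/(1+\lambda)<1$, so $w_3^*\neq1$. Again the $\ws(\tolp,w^*)$ objective is $\frac{1}{1+\lambda}$ times the $\ws(\pblp^2(\lambda),w)$ objective, so the same contradiction argument carries optimality over.

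The only delicate step is the role of the hypothesis $w_3^*\neq1$. It is needed exactly to make the normalizing factor $1-w_3^*$ positive, so that $\lambda$ and $w$ are well defined. In the normalized weight set, $w_3^*\neq 1$ is equivalent to $(w_1^*,w_2^*)\neq(0,0)$, and this is the condition we will use. We should also remark that $w_3^*=1$ corresponds to the limit $\lambda\to\infty$. As with $w_1^*=0$ in Theorem~\ref{thm2}, the equivalence can genuinely fail there, so the exclusion is necessary and not just an artifact of the proof.
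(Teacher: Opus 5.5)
Your proposal is correct and follows the paper's proof. You use the same forward maps $\lambda = w_3^*/(1-w_3^*)$ and $w_i = w_i^*/(1-w_3^*)$, and the same observation that the two scalarized objectives differ by a positive factor. In the converse you take the normalized weight $\bigl(\frac{w_1}{1+\lambda},\frac{w_2}{1+\lambda},\frac{\lambda}{1+\lambda}\bigr)$ rather than the paper's unnormalized $(w_1,w_2,\lambda)$, which has the small advantage of making $w_3^*<1$ explicit (the paper's literal choice gives $w_3^*=1$ when $\lambda=1$).
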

	\begin{proof}
		This proof is analogous to the proof of Theorem~\ref{thm2} where we define
		\begin{align} \label{eqn5}
			w_1 & \coloneqq \frac{w^*_1}{1-w^*_3} \nonumber\\
			w_2 & \coloneqq \frac{w^*_2}{1-w^*_3} \\
			\lambda & \coloneqq \frac{w^*_3}{1-w^*_3},\nonumber
		\end{align}
		in the first part of the proof and we define
		\begin{align*}
			w^*_1 & \coloneqq w_1 \nonumber\\
			w^*_2 & \coloneqq w_2\nonumber\\
			w^*_3 & \coloneqq \lambda \nonumber
		\end{align*}
		in the converse case.
	\end{proof}
	
	We now address the case $w_3^* = 1$ as we assume $w_3^* \neq 1$ in the theorem above. 
	Based on the construction of the proof, the parameter $\lambda$ is undefined if $w_3^* = 1$. 
	The limit of $w_3^*$ approaching $1$ is equivalent to the parameter $\lambda$ approaching $\infty$ in the weighted sum scalarization of $\pblp^2$. 
	Additionally, the one-to-one correspondence of Theorem~\ref{thm3} does not hold: there exist $w^*\in\mathbb{R}^3_\geq$ with $w^*_3=1$ and an optimal solution $x^*$ for $\ws(\tolp,w^*)$ such that there are no $\lambda\geq0$ and $w\in\mathbb{R}^2_\geq$ where $x^*$ is optimal for $\ws(\pblp^2(\lambda),w)$.
	This can be illustrated by the following example: let $(1,1,1)$ and $(0,0,1)$ be the extreme points of $X$, and let $c_1 x = x_1$, $c_2 x=x_2$ and $d_1 x= x_3$.
	For $w^*=(0,0,1)$, both $(1,1,1)$ and $(0,0,1)$ are optimal solutions of $\ws(\tolp,w^*)$.
	But for every choice of $\lambda\geq0$ and $w \in\mathbb{R}^2_\geq$, $(0,0,1)$ is a better solution than $(1,1,1)$ for $\ws(\pblp^2(\lambda),w)$.
	Therefore, $(1,1,1)$ is never optimal for $\ws(\pblp^2(\lambda),w)$.

	We extend the weight set of $\pblp^2( \lambda)$ to a higher dimension the mapping $\w(\pblp^2( \lambda))$ just as in case I. 
	
	\begin{definition}
		For a fixed value of $\lambda$, we define $\w(\pblp^2 (\lambda))$ as,
		\begin{align*}
			\w(\pblp^2 (\lambda)) \coloneqq \left \{\left(\frac{w_1}{1+\lambda}, \frac{w_2}{1+\lambda}, \frac{\lambda}{1+\lambda}\right): (w_1, w_2) \in \mathbb{R}_{\geqq}^{2}, w_1+w_2 = 1\right\}.
		\end{align*}
	\end{definition}
	
	\begin{proposition}\label{prop4}
		Let $\w(\pblp^2 (\lambda))$ and $\w(\tolp)$ be weight sets of $\pblp^2$ for a fixed $\lambda$ and the corresponding TOLP, respectively, then 
		\begin{align*}
			\w(\pblp^2 (\lambda)) \subsetneq \w(\tolp).
		\end{align*}
	\end{proposition}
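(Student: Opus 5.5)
The plan mirrors the proof of Proposition~\ref{prop2}: first show the inclusion by checking membership in the simplex, then exhibit one weight of $\w(\tolp)$ that lies outside $\w(\pblp^2(\lambda))$.

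For the inclusion, take an arbitrary $w^* \in \w(\pblp^2(\lambda))$. By definition it has the form
\begin{align*}
w^* = \left(\frac{w_1}{1+\lambda}, \frac{w_2}{1+\lambda}, \frac{\lambda}{1+\lambda}\right)
\end{align*}
with $(w_1,w_2)\in\mathbb{R}^2_\geqq$ and $w_1+w_2=1$. Since $\lambda \geq 0$, the denominator $1+\lambda$ is positive. Hence every component of $w^*$ is non-negative. Using $w_1+w_2=1$, the components sum to $(1+\lambda)/(1+\lambda)=1$. Therefore $w^* \in \w(\tolp)$.

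For properness, I would again use the vertex $w'\coloneqq(0,0,1)\in\w(\tolp)$. The third component of any element of $\w(\pblp^2(\lambda))$ is $\frac{\lambda}{1+\lambda}$, and this is strictly less than $1$ for every finite $\lambda \geq 0$. So $w' \notin \w(\pblp^2(\lambda))$, which gives $\w(\pblp^2(\lambda)) \subsetneq \w(\tolp)$.

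An alternative witness would work for any fixed $\lambda$. The third component is constant on $\w(\pblp^2(\lambda))$, so the set is the segment $\{w^*\in\w(\tolp): w^*_3=\frac{\lambda}{1+\lambda}\}$. Any point of $\w(\tolp)$ with a different third coordinate, such as $(1,0,0)$ when $\lambda>0$, also lies outside it. I would nevertheless use $(0,0,1)$ because it works uniformly in $\lambda$ and parallels Proposition~\ref{prop2}.

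There is no real obstacle here. The only point needing care is to state explicitly that $\lambda$ is finite and non-negative, so that $1+\lambda>0$ and $\lambda/(1+\lambda)<1$.
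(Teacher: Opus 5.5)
Your proof is correct and follows the paper's argument: check that the components of $w^*$ sum to one, then use the vertex $(0,0,1)$, which is never attained because $\lambda/(1+\lambda)<1$. Your proof also checks non-negativity explicitly and covers all $\lambda\geq 0$; the paper skips the non-negativity check and states the properness step only for $\lambda>0$.
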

	\begin{proof}
		Let $w^* \in \w(\pblp^2 (\lambda))$. Then by the definition of $\w(\pblp^2 (\lambda))$, 
		\begin{align*}
			w^* = \left(\frac{w_1}{1+\lambda}, \frac{w_2}{1+\lambda}, \frac{ \lambda}{1+\lambda}\right).
		\end{align*} 
		Since the sum of the components
		\begin{align*}
			\frac{w_1}{1+\lambda} + \frac{w_2}{1+\lambda} + \frac{ \lambda}{1+\lambda} = 1 .
		\end{align*} 
		Thus, $w^* \in \w(\tolp)$.\\
		Consider a particular weight $w^* = (0,0,1) \in \w(\tolp)$. For any given value of $\lambda > 0$ the third component of a weight in $\w(\pblp^2 (\lambda))$, 
		\begin{align*}
			\frac{\lambda}{1+\lambda} \neq 1 .
		\end{align*}
		Therefore, it holds $w^* \notin \w(\pblp^2 (\lambda))$.
	\end{proof}
	
	\begin{proposition}\label{prop5}
		For two parameter values $\lambda_1$ and $\lambda_2$, if $\lambda_1 \neq \lambda_2$ then
		\begin{align*}
			\w(\pblp^2 (\lambda_1)) \cap \w(\pblp^2 (\lambda_2)) = \emptyset.
		\end{align*}
	\end{proposition}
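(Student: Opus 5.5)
The plan is to argue by contradiction, using only the third coordinate of the weights in the definition of $\w(\pblp^2(\lambda))$. Every element of $\w(\pblp^2(\lambda))$ has the form
\begin{align*}
\left(\frac{w_1}{1+\lambda}, \frac{w_2}{1+\lambda}, \frac{\lambda}{1+\lambda}\right),
\end{align*}
and its third component $\frac{\lambda}{1+\lambda}$ does not depend on the choice of $(w_1,w_2)$. Geometrically, $\w(\pblp^2(\lambda))$ lies in the plane $w^*_3 = \frac{\lambda}{1+\lambda}$. The sets for different $\lambda$ are therefore parallel horizontal slices of the simplex $\w(\tolp)$. This differs from Case I (Proposition~\ref{prop3}), where all the segments pass through the common point $(0,1,0)$.

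Concretely, I assume there is some $w^* \in \w(\pblp^2(\lambda_1)) \cap \w(\pblp^2(\lambda_2))$. Then there exist $(w_1,w_2)$ and $(w_1',w_2')$ in $\mathbb{R}^2_\geqq$, each summing to $1$, such that $w^*$ has both representations. Comparing third coordinates gives
\begin{align*}
\frac{\lambda_1}{1+\lambda_1} = \frac{\lambda_2}{1+\lambda_2}.
\end{align*}
The map $t \mapsto \frac{t}{1+t} = 1 - \frac{1}{1+t}$ is strictly increasing on $\mathbb{R}_\geqq$, hence injective. Equivalently, cross-multiplying gives $\lambda_1(1+\lambda_2) = \lambda_2(1+\lambda_1)$, that is, $\lambda_1 = \lambda_2$. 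This contradicts $\lambda_1 \neq \lambda_2$, so the intersection is empty.

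There is no real obstacle here. The only point requiring care is the justification of injectivity, which is the one-line cross-multiplication above. It uses $1+\lambda_i > 0$, and this holds since $\lambda_i \geq 0$. As a side remark, one could note that the slices for $\lambda \geq 0$ exhaust $\w(\tolp)$ except for the vertex $(0,0,1)$, which is consistent with Proposition~\ref{prop4}. This is not needed for the proof.
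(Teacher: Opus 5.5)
Your proof is correct and uses the same idea as the paper: compare the components of a common element and conclude from $\frac{\lambda_1}{1+\lambda_1} \neq \frac{\lambda_2}{1+\lambda_2}$. Your version is cleaner, because you allow different preimages $(w_1,w_2)$ and $(w_1',w_2')$ and rely only on the third coordinate. The paper also lists first- and second-coordinate inequalities, written with Case~I denominators $1+w_1\lambda_i$, which are not needed and are not valid in general (e.g.\ for $w_1=0$).
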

	\begin{proof}
		By the definition of $\w(\pblp^2 (\lambda))$, it it
		\begin{align*}
			\w(\pblp^2 (\lambda_1)) &\coloneqq \left \{\left(\frac{w_1}{1+\lambda_1}, \frac{w_2}{1+\lambda_1}, \frac{\lambda_1}{1+\lambda_1}\right): (w_1, w_2) \in \mathbb{R}_{\geq}^{2}, w_1+w_2 = 1\right\}
			\intertext{and}
			\w(\pblp^2 (\lambda_2)) &\coloneqq \left \{\left(\frac{w_1}{1+\lambda_2}, \frac{w_2}{1+\lambda_2}, \frac{\lambda_2}{1+\lambda_2}\right):(w_1, w_2) \in \mathbb{R}_{\geq}^{2}, w_1+w_2 = 1\right\}.
		\end{align*}
		For all $w_1 \geq 0$, by comparing each corresponding pair of individual components of $w \in \w(\pblp^2 (\lambda_1))$ and $w' \in \w(\pblp^2 (\lambda_1))$, we get 
		\begin{align*}
			\frac{w_1}{1+w_1 \lambda_1} &\neq \frac{w_1}{1+w_1 \lambda_2}\\
			\frac{w_2}{1+w_1 \lambda_1} &\neq \frac{w_2}{1+w_1 \lambda_2}\\
			\frac{\lambda_1}{1+\lambda_1} &\neq \frac{\lambda_2}{1+\lambda_2}
		\end{align*}
		since $\lambda_1 \neq \lambda_2$. Thus,
		\begin{align*}
			\w(\pblp^2 (\lambda_1)) \cap \w(\pblp^2 (\lambda_2)) = \emptyset.
		\end{align*}
	\end{proof}
	
	\begin{remark} \label{remark2}
		Let $\w(\pblp^2 (\lambda))$ be the weight set of $\pblp^2$ for a fixed $\lambda$ and let $\w(\tolp)$ be the weight set of the corresponding TOLP. Then, as a consequence of equating the weights in $\w(\pblp^2(\lambda))$ for every $\lambda$ with $\w(\tolp)$, we observe the following:
		\begin{enumerate}[label=\roman*.]
			\item \label{remark2:partone}
			$$	\bigcup\limits_{\lambda \geq 0}{}   \w(\pblp^2 (\lambda)) = \w(\tolp) \setminus \left \{(0,0,1)\right\} $$
			\item \label{remark2:parttwo} The projection of the weight set $\w(\pblp^2 (\lambda))$ in $\mathbb{R}^2$ can be defined by a line segment,
			\begin{align*}
				\mathcal{L}_{\w(\pblp^2(\lambda))} = \left\{ (w^*_1, w^*_2) \mid w^* \in \w(\tolp),\ w^*_1 + w^*_2 = \frac{1}{1 + \lambda} \right\}
			\end{align*}
			\item \label{remark2:partthree} The slope $m_\lambda$ of the line segment $	\mathcal{L}_{\w(\pblp^2(\lambda))} $ is given by,
			\begin{align*}
				m_\lambda = -1
			\end{align*}
			\item \label{remark2:partfour}  The end points of $\mathcal{L}_{\w(\pblp^2(\lambda))}$ are $(0, \frac{1}{1+ \lambda})$ and $\left(\frac{1}{1+ \lambda}, 0\right)$.
		\end{enumerate}
	\end{remark} 
	Remark~\propitemref{remark2}{remark2:partone} implies that if we vary the parameter value $\lambda$ in $\pblp^2$ and project all the corresponding weight sets $\w(\pblp^2(\lambda))$ in $\mathbb{R}^2$, we obtain almost the entire weight set of the associated TOLP except for the weight $(0,0,1)$ due to the reason mentioned in Theorem~\ref{thm3}. Remarks~\propitemref{remark2}{remark2:parttwo},~\propitemref{remark2}{remark2:partthree} and ~\propitemref{remark2}{remark2:partfour} are direct results of the definition of $\w(\pblp^2 (\lambda))$ for $w_1 \in \left[0,1\right]$ and are illustrated in Figure~\ref{fig3}. 
	
	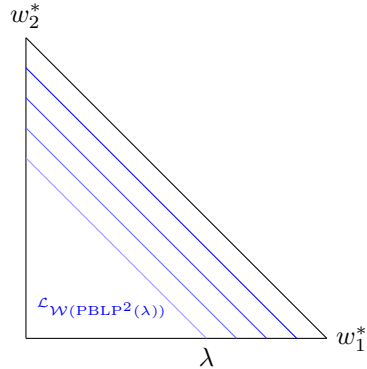
\begin{figure*}[h]
		\centering
		\begin{tikzpicture}[scale=0.8]
			\draw (0,0) -- (5,0) node[right]{$w_1^*$};
			\draw (0,0) -- (0,5) node[anchor=south]{$w_2^*$};
			\draw [] plot coordinates {(0,5) (5, 0)} node[right] [font = {\scriptsize}] {};
			\draw [blue!40] (0,3) node[left]{} -- (3,0) node[below]{};
			\draw plot[] coordinates {(0,0)} node[left] {};
			\draw []plot[] coordinates {(3, 0)} node[below] {$\lambda$};
			\draw [blue]plot[] coordinates {(2.5, 0.5)} node[left] {\tiny $	\mathcal{L}_{\w(\pblp^2(\lambda))}$};
			\draw [blue](0,4.5) node[left]{} -- (4.5,0) node[below]{};
			\draw [blue!80](0,4) node[left]{} -- (4,0) node[below]{};
			\draw [blue!60](0,3.5) node[left]{} -- (3.5,0) node[below]{};
		\end{tikzpicture}
		\caption{An illustration of $\mathcal{L}_{\w(\pblp^2(\lambda))}$ for different values of $\lambda$ in the weight set of TOLP.\label{fig3}}
	\end{figure*}
	\subsection{Illustrative examples and more results}
	
	Every solution set $S$ for TOLP contains a subset of solutions such that there is a bijection between this subset and the extreme nondominated images $Y_\en$ of TOLP.
	At the same time, the weight set $\w(\tolp)$ can be decomposed into full dimensional weight set components such that there is a bijection between these weight set components and $Y_\en(\tolp)$ (cf.~\cite{Przybylski2010}).
	By Remarks~\ref{remark1}~\ref{remark1:parttwo} and \ref{remark2}~\ref{remark2:parttwo}, for a fixed value of $\lambda\geq0$, the weight set $\w(\pblp^j(\lambda))$ is a line segment that lies in $\w(\tolp)$.
	%the line segment $\mathcal{L}_{\w(\pblp^j(\lambda))}$.
	This line segment intersects some (full-dimensional) weight set components.
	For each component $\w(y)$, there are three possibilities: 
	$\w(\pblp^j(\lambda))$ intersects $\w(y)$ either in a single vertex or along an edge, or it passes through the interior of $\w(y)$.
	The entire segment $\w(\pblp^j(\lambda))$ can then be decomposed into such intersections (cf.~\cite{Przybylski2010}).
	A solution set for $\pblp^j(\lambda)$ can be obtained by using all solutions from $S$ where the corresponding weight set component is intersected by $\w(\pblp^j(\lambda))$.
	Therefore, $S$ is also a solution set for $\pblp^j$.
	
	At the same time, Theorem~\ref{thm2} and Theorem~\ref{thm3} imply that, for every full dimensional weight set component $\w(y)$ of an extreme nondominated image $y\in Y_\en(\tolp)$, there is at least one value $\lambda\geq0$ such that the line segment $\w(\pblp^j(\lambda))$ intersects the interior of $\w(y)$.
	Then, a solution set for $\pblp^j(\lambda)$ must contain at least one solution $x$ such that $(c_1x,c_2x,d_1x)^\top=y$ (cf.~\cite{Przybylski2010}).
	Therefore, every solution set for $\pblp^j$ also contains a solution set for TOLP.
	
	Since every solution set for TOLP contains a solution set for $\pblp^j$, we can make the following statement regarding minimal solution sets:
	
	\begin{proposition}\label{prop6}
		A set $S \subseteq X$ is a minimal solution set for TOLP if and only if $S$ is a minimal solution set for $\pblp^j$.
	\end{proposition}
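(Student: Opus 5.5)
The plan is to prove two containment facts and then derive the equivalence of minimal solution sets by a cardinality argument. The two facts are: (a) every solution set for TOLP is a solution set for $\pblp^j$, and (b) every solution set for $\pblp^j$ is a solution set for TOLP. Once both hold, the families of solution sets for TOLP and for $\pblp^j$ coincide, so their minimum cardinalities agree and the minimal members coincide. Concretely, if $S$ is minimal for TOLP, then by (a) it is a solution set for $\pblp^j$. Any solution set $S'$ for $\pblp^j$ with $|S'|<|S|$ would, by (b), be a solution set for TOLP, contradicting minimality. The converse direction is symmetric.

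For (a), fix $\lambda\ge 0$ and a solution set $S$ for TOLP. I must show that $S$ contains a solution set for the BOLP $\pblp^j(\lambda)$.
\begin{enumerate}[label=\arabic*.]
\item Take any $\bar y\in Y_\en(\pblp^j(\lambda))$. It is extreme supported, so it is the unique optimal image of $\ws(\pblp^j(\lambda),w)$ for some $w$.
\item Map $w$ to $w^*\in\w(\pblp^j(\lambda))\subseteq\w(\tolp)$ via (\ref{eqn1}) or (\ref{eqn4}). The two problems then have the same objective up to a positive factor.
\item By Proposition~\ref{prop1}, $w^*$ lies in some $\w(y)$ with $y\in Y_\en(\tolp)$. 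Hence the $x\in S$ with image $y$ is optimal for $\ws(\tolp,w^*)$, and so for $\ws(\pblp^j(\lambda),w)$.
\item By uniqueness of the optimal image, $x$ has image $\bar y$ in $\pblp^j(\lambda)$.
\end{enumerate}
This covers every point of $Y_\en(\pblp^j(\lambda))$, including the boundary weights $w_1=0$ or $w_2=0$: the map from Proposition~\ref{prop2}/\ref{prop4} still lands in $\w(\tolp)$ there.

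For (b), let $S$ be a solution set for $\pblp^j$ and $y\in Y_\en(\tolp)$.
\begin{enumerate}[label=\arabic*.]
\item By Proposition~\ref{prop1}(iii), $\w(y)$ is full-dimensional. Its interior therefore contains a weight $w^*$ with $w_1^*>0$ (Case I) or $w_3^*<1$ (Case II); the excluded sets are a facet or a point.
\item Choose $w^*$ in the interior so that it lies in no other component $\w(y')$. Then $y$ is the unique optimal image of $\ws(\tolp,w^*)$.
\item Theorem~\ref{thm2} or Theorem~\ref{thm3} (with the explicit constructions (\ref{eqn2}), (\ref{eqn5})) gives $\lambda$ and $w$ with $\ws(\pblp^j(\lambda),w)$ a positive multiple of $\ws(\tolp,w^*)$. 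Every optimal $x$ of it therefore has TOLP-image $y$.
\item I need that the optimal face of $\ws(\pblp^j(\lambda),w)$ contains an extreme nondominated image of $\pblp^j(\lambda)$, which $S$ must represent. Take $\bar y=(c_1x+\lambda d_1x,\dots)$ for an optimal $x$. If $w_1,w_2>0$, the optimal images form a face of the BOLP image set, and that face has an extreme point $\bar y'$ that is nondominated.
\item $S$ contains some $x'$ with image $\bar y'$ in $\pblp^j(\lambda)$. This $x'$ is optimal for $\ws(\pblp^j(\lambda),w)$, so its TOLP-image is $y$.
\end{enumerate}

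The main obstacle is step 4 of (b), that is, guaranteeing $w_1,w_2>0$ and that the optimal set in the BOLP image space has a nondominated extreme point. Interior points of the full-dimensional $\w(y)$ allow perturbing $w^*$ to have all components positive, which yields $w_1,w_2>0$. Existence of an extreme point of the optimal face then follows because $Y(\pblp^j(\lambda))$ is a linear image of the pointed polyhedron $X$. That face is bounded in the relevant directions when $w>0$, since an optimum exists. All its points are nondominated, because they are optimal for a strictly positive weight, and its extreme points are extreme points of $Y$. With these two facts, the cardinality argument completes the proof.
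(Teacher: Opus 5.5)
Your proposal follows the paper's route. You show that every TOLP solution set is a solution set for $\pblp^j$, that every solution set for $\pblp^j$ contains one for TOLP (via Theorems~\ref{thm2}/\ref{thm3}, some segment $\w(\pblp^j(\lambda))$ meets the interior of each full-dimensional $\w(y)$), and then that the minimal sets coincide. You also fill in the unique-optimal-image details that the paper leaves to a citation of Przybylski et al.

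One justification is wrong as written: the claim in your last paragraph that the optimal face has an extreme point \emph{because} $Y(\pblp^j(\lambda))$ is a linear image of the pointed polyhedron $X$. Linear images of pointed polyhedra need not be pointed; for instance, $x\mapsto x_1-x_2$ maps $\mathbb{R}^2_\geqq$ onto $\mathbb{R}$. The remark ``bounded in the relevant directions'' does not repair this.

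You do not need the claim, because your own step 3 already gives more. Every optimal $x$ of $\ws(\pblp^j(\lambda),w)$ has TOLP image $y$. Hence all optimal BOLP images coincide with the single point $\bar y=(y_1+\lambda y_3,\,y_2)$ in Case I, or $(y_1+\lambda y_3,\,y_2+\lambda y_3)$ in Case II. As the unique minimizer of a weighted sum with $w_1,w_2>0$, $\bar y$ is an exposed, hence extreme, nondominated point, so $\bar y\in Y_\en(\pblp^j(\lambda))$. Step 5 then goes through with $\bar y'=\bar y$.

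A minor point: step 3 of part (a) uses that the components $\w(y)$, $y\in Y_\en(\tolp)$, cover $\w(\tolp)$. That is the standard decomposition fact the paper also assumes, not what Proposition~\ref{prop1} literally states.
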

	
	An illustration of the weight sets of the two parametric biobjective linear programs $\pblp^1$, $\pblp^2$ and the weight set of its corresponding TOLP is shown in Example~\ref{eg1}.
	\begin{example}	\label{eg1}
		Consider the following linear $\pblp^1$ with a non-negative parameter~$\lambda \geq 0$:
		\begin{alignat*}{2}
			&\mathrm{\min} \quad &&\begin{pmatrix}
				-3x_1- x_2 + \lambda (x_1 +x_2) \\
				x_1-2 x_2
			\end{pmatrix}\\
			&\mathrm{s.\ t.} \quad &&x \in X \coloneqq \left \{x \in \mathbb{R}^2_\geq : 3x_1 +2 x_2 \geq6;	x_1 \leq 10; x_2 \leq 3\right\},
		\end{alignat*}
		and let us also consider a linear $\pblp^2$:
		\begin{alignat*}{2}
			&\mathrm{\min} \quad &&\begin{pmatrix}
				-3x_1- x_2 + \lambda (x_1 +x_2) \\
				x_1-2 x_2 +  \lambda (x_1 +x_2)
			\end{pmatrix}\\
			&\mathrm{s.\,t.} \quad &&x \in X 
		\end{alignat*}
		and the corresponding TOLP which is related to both parametric biobjective problems:
		\begin{alignat*}{2}
			&\mathrm{\min} \quad && \left(-3x_1- x_2, x_1-2 x_2, x_1 +x_2 \right)^\top\\
			&\mathrm{s.\,t.} \quad && x \in X .
		\end{alignat*}
		The weight set decomposition of the TOLP is composed of four weight set components, each corresponding to the extreme nondominated images~$y^1, y^2, y^3$, and $y^4$ as shown in Figure~\ref{fig4}.
		The weight sets of $\pblp^1(\lambda)$ and $\pblp^2(\lambda)$ for the parameter values $\lambda = 0,1,2$ are shown in Figures~\ref{fig5:a} and~\ref{fig5:b}, respectively.
	\end{example}
	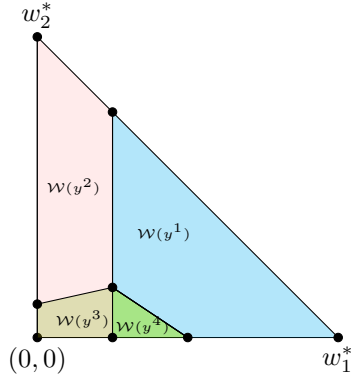
\begin{figure*}[h]
		\centering
		\begin{tikzpicture}[scale=0.8]
			\draw (0,0) -- (5,0) node[below]{$w_1^*$};
			\draw (0,0) -- (0, 5) node[anchor=south]{$w_2^*$};
			\draw plot[mark=*] coordinates {(0,0)} node[below]{$(0,0)$} ;
			\draw [fill=pink, fill opacity = 0.3] plot coordinates {(0, 5/9) (0, 5) (1.25, 3.75) (1.25, 2.5/3)};
			\draw [fill=olive, fill opacity = 0.3] plot coordinates {(0,0) (0, 5/9) (1.25, 2.5/3)(2.5,0)};
			\draw [fill=green, fill opacity = 0.25] plot coordinates {(1.25, 0) (1.25, 2.5/3) (2.5,0)};
			\draw [fill=cyan, fill opacity = 0.3] plot coordinates {(2.5,0) (1.25, 2.5/3) (1.25, 3.75)(5, 0)};
			\draw plot[mark=*] coordinates {(5,0)};
			\draw plot[mark=*] coordinates {(1.25, 3.75)};
			\draw plot[mark=*] coordinates {(0, 5/9)};
			\draw plot[mark=*] coordinates {(1.25,0)};
			\draw plot[mark=*] coordinates {(2.5,0)};
			\draw plot[mark=*] coordinates {(0,5)};
			\draw plot[mark=*] coordinates {(1.25, 2.5/3)};
			\draw (1.25,0)--(1.25,5/6);
			\draw (1.25, 3.75)--(1.25,5/6);
			\draw (2.5,0)--(1.25,5/6);
			\node at (.6,2.5) {\tiny $\w(y^2)$};
			\node at (0.75,0.3) {\tiny $\w(y^3)$};
			\node at (2.1,1.75) {\tiny $\w(y^1)$};
			\node at (1.75,.2) {\tiny $\w(y^4)$};
		\end{tikzpicture}
		\caption{Weight set of TOLP with four weight set components.\label{fig4}}
	\end{figure*}
	
	\begin{figure*}[h]
		\begin{subfigure}[b]{0.49\textwidth}
			\centering
			\begin{tikzpicture}[scale=0.4]
				\draw (0,0) -- (10,0) node[below]{$w_1^*$};
				\draw (0,0) -- (0,10) node[anchor=south]{$w_2^*$};
				%\draw plot coordinates {(0,10) (10, 0)};
				\draw [red] plot coordinates{(0,10) (10, 0)};
				\draw [blue] plot coordinates {(0,10) (5, 0)};
				\draw [orange] plot coordinates {(0,10) (10/3, 0)};
				\draw [thick, red]plot[] coordinates {(8.5,5)} node[left] {\tiny $\mathcal{L}_{\w(\pblp^1 (0)}$};
				\draw [thick, blue]plot[] coordinates {(7.2,1.8)} node[left] {\tiny $\mathcal{L}_{\w(\pblp^1 (1)}$};
				\draw [thick, orange]plot[] coordinates {(2.5,3)} node[left] {\tiny $\mathcal{L}_{\w(\pblp^1 (2)}$};
				\draw [fill=pink, fill opacity = 0.25,ultra thin, dash pattern=on 1pt off 2pt] plot coordinates {(0, 10) (0, 10/9) (2.5, 5/3) (2.5, 7.5)};
				\draw [fill=olive, fill opacity = 0.25, ultra thin, dash pattern=on 1pt off 2pt] plot coordinates {(0,0) (0, 10/9) (2.5, 5/3)(2.5,0)};
				\draw [fill=green, fill opacity = 0.25,ultra thin, dash pattern=on 1pt off 2pt] plot coordinates {(2.5, 0) (2.5, 5/3) (5,0)};
				\draw [fill=cyan, fill opacity = 0.25, ultra thin, dash pattern=on 1pt off 2pt] plot coordinates { (2.5, 7.5) (2.5, 5/3) (5,0) (10, 0)};
				\draw plot[mark=*] coordinates {(0,0)} node[below]{} ;
				\draw plot[mark=*] coordinates {(10,0)};
				\draw plot[mark=*] coordinates {(5/2, 15/2)};
				\draw plot[mark=*] coordinates {(5/2,5)};
				\draw plot[mark=*] coordinates {(5,0)};
				\draw plot[mark=*] coordinates {(5/2,5/2)};
				\draw plot[mark=*] coordinates {(20/7, 10/7)};
				\draw plot[mark=*] coordinates {(10/3, 0)};
			\end{tikzpicture}
			\caption{\label{fig5:a}}
		\end{subfigure}
		\begin{subfigure}[b]{0.49\textwidth}
			\centering
			\begin{tikzpicture}[scale=0.4]
				\draw (0,0) -- (10,0) node[below]{$w_1^*$};
				\draw (0,0) -- (0,10) node[anchor=south]{$w_2^*$};
				%\draw plot coordinates {(0,10) (10, 0)};
				\draw [red] plot coordinates{(0,10) (10, 0)};
				\draw [ blue] plot coordinates {(0,5) (5, 0)};
				\draw [orange] plot coordinates {(0,10/3) (10/3, 0)};
				\draw [thick, red]plot[] coordinates {(8.5,5)} node[left] {\tiny $\mathcal{L}_{\w(\pblp^2 (0)}$};
				\draw [thick, blue]plot[] coordinates {(6.5,2)} node[left] {\tiny $\mathcal{L}_{\w(\pblp^2 (1)}$};
				\draw [thick, orange]plot[] coordinates {(0,10/3)} node[left] {\tiny $\mathcal{L}_{\w(\pblp^2 (2)}$};
				\draw [fill=pink, fill opacity = 0.25,ultra thin, dash pattern=on 1pt off 2pt] plot coordinates {(0, 10) (0, 10/9) (2.5, 5/3) (2.5, 7.5)};
				\draw [fill=olive, fill opacity = 0.25,ultra thin, dash pattern=on 1pt off 2pt] plot coordinates {(0,0) (0, 10/9) (2.5, 5/3)(2.5,0)};
				\draw [dashed, fill=green, fill opacity = 0.25] plot coordinates {(2.5, 0) (2.5, 5/3) (5,0)};
				\draw [fill=cyan, fill opacity = 0.25,ultra thin, dash pattern=on 1pt off 2pt] plot coordinates { (2.5, 7.5) (2.5, 5/3) (5,0) (10, 0)};
				\draw plot[mark=*] coordinates {(0,0)} node[below]{} ;
				\draw plot[mark=*] coordinates {(10,0)};
				\draw plot[mark=*] coordinates {(5/2, 15/2)};
				\draw plot[mark=*] coordinates {(5/2,5/6)};
				\draw plot[mark=*] coordinates {(5,0)};
				\draw plot[mark=*] coordinates {(5/2,5/2)};
				\draw plot[mark=*] coordinates {(10/3, 0)};
			\end{tikzpicture}
			\caption{\label{fig5:b}}
		\end{subfigure}
		\caption{Illustration of line segments of $\pblp^j $ for parameter values, $\lambda=0,1,2$ \label{fig5} in the weight set of TOLP. (\ref{fig5:a}) Line segments $\mathcal{L}_{\w(\pblp^1 (\lambda)}$ of $\pblp^1$ and (\ref{fig5:b}) Line segments $\mathcal{L}_{\w(\pblp^1 (\lambda)}$ of $\pblp^2$}.
	\end{figure*}
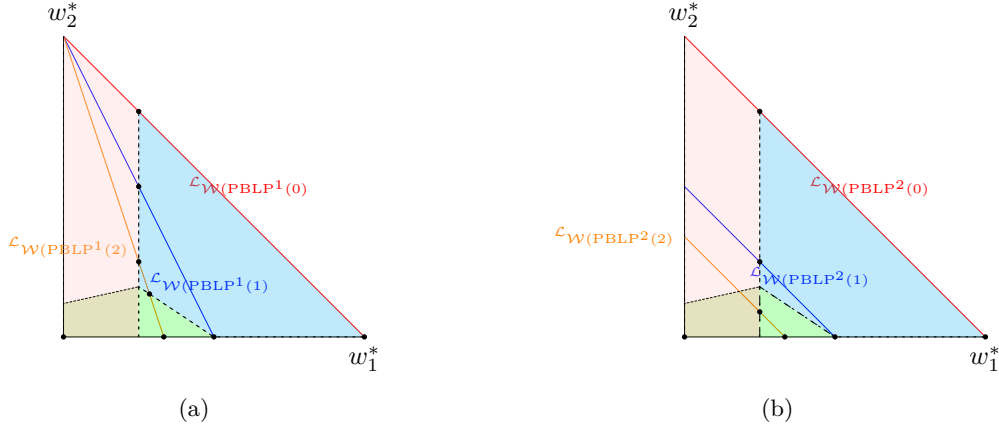
	
	%Since solving a parametric biobjective linear program is understood as finding a minimal solution set for all values of $\lambda$, it is sufficient to focus on the parameter values where a minimal solution set of BOLP $\pblp^j(\lambda)$ changes as $\lambda$ varies. These parameter values are known as \emph{breakpoints}. 
	
	A minimal solution set for the parametric biobjective linear program $\pblp^j$ is defined over the entire parameter set. We are particularly interested in the critical parameter values where a minimal solution set of $\pblp^j(\lambda)$ changes as $\lambda$ varies. These values are known as \emph{breakpoints}.
	More formally, for a given $\pblp^j$, some parameter value $\lambda \geq 0$ is called a breakpoint if, for a small $\varepsilon > 0$, 
	\begin{align*}
		S(\pblp^j(\lambda + \varepsilon)) \neq S(\pblp^j(\lambda - \varepsilon)).
	\end{align*}
	% parametric biobjective linear programs $\pblp^j$ is a set of breakpoints such that each breakpoint corresponds to a unique set of solutions that is a subset of the minimal solution set, $S$. 
	Therefore, the solution to parametric biobjective linear programs $\pblp^j$ consists of the following:
	\begin{enumerate}[label=\roman*.]
	\item a minimal solution set $S$ for $\pblp^j$ and 
	\item a set of breakpoints in the parameter set. 
	\end{enumerate}
	
	In $\pblp^j$, breakpoints can exhibit behaviour not observed in single-objective parametric optimization. 
	Certain breakpoints are characterized by a unique minimal solution set that changes at that point, differing from the sets immediately preceding and following it.
	This happens when the solutions leaving and entering a minimal solution set correspond to extreme nondominated images that are still nondominated but not extreme points for $\pblp^j(\lambda)$. 
	As a result, the parameter set is divided into a set of intervals and/or unique parameter values, each corresponding to a minimal solution set, respectively. 
	A special case here is that of a \emph{tie}; we define a \emph{tie} as a breakpoint that serves as both the upper bound of an interval and the lower bound of the next consecutive interval. 
	In such cases, we include the breakpoint in the preceding interval and exclude it from the following one. 
	%We show these different types of breakpoints in Figure~\ref{fig6} and  in our illustrations, note that feasible solutions $x_1, x_2, x_3$ and $x_4$ correspond to the extreme nondominated images $y_1,y_2,y_3$ and $y_4$, respectively.
	These various breakpoints are visualized in Figure~\ref{fig6}.
	
	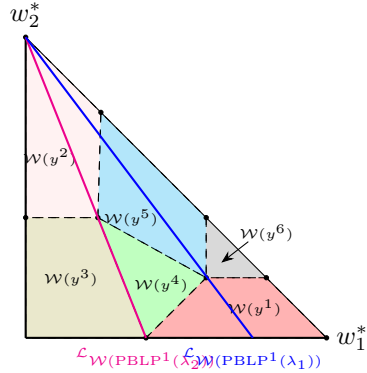
\begin{figure*}[h]
		\centering
		\begin{tikzpicture}[scale=0.4]
			\draw [dashed,fill=pink, fill opacity = 0.2] plot coordinates {(0,10) (0,4)(2.4,4) (2.5,7.5) (0,10)};
			\draw [dashed,fill=olive, fill opacity = 0.2] plot coordinates {(0,0) (0,4) (2.4,4)(4,0)(0,0)};
			\draw [dashed,fill=green, fill opacity = 0.25] plot coordinates { (2.4,4)(4,0)(6,2) (2.4,4)};
			\draw [dashed,fill=cyan, fill opacity = 0.3] plot coordinates {(2.4,4) (2.5,7.5)(6,4) (6,2) (2.4, 4)};
			\draw [dashed,fill=gray, fill opacity = 0.3] plot coordinates {(8,2)(6,2)(6, 4)};
			\draw [dashed,fill=red, fill opacity = 0.3] plot coordinates {(4,0) (6,2)(8,2) (10,0) (4,0)};
			\draw [-Stealth] (7.4,3.2) -- (6.5,2.5) ;
			\draw [thick](0,0) -- (10,0) node[right]{$w_1^*$};
			\draw [thick](0,0) -- (0, 10) node[anchor=south]{$w_2^*$};
			\draw (0,10) -- (10,0);
			\draw plot[mark=*] coordinates {(0,10)};
			\draw plot[mark=*] coordinates {(0,4)};
			\draw plot[mark=*] coordinates {(2.5,7.5)};
			\draw plot[mark=*] coordinates {(2.4,4)};
			\draw plot[mark=*] coordinates {(4,0)};
			\draw plot[mark=*] coordinates {(10,0)};
			\draw plot[mark=*] coordinates {(6,2)};
			\draw plot[mark=*] coordinates {(8,2)};
			\draw plot[mark=*] coordinates {(6,4)};
			\node at (7.6,1) {\tiny $\w(y^1)$};
			\node at (0.8,6) {\tiny $\w(y^2)$};
			\node at (1.5,2) {\tiny $\w(y^3)$};
			\node at (4.5,1.8) {\tiny $\w(y^4)$};
			\node at (3.5,4) {\tiny $\w(y^5)$};
			\node at (8,3.5) {\tiny $\w(y^6)$};
			\draw [thick, magenta] (0,10)--(4,0)node[below]{\tiny $\mathcal{L}_{\w(\pblp^1(\lambda_2))}$};
			\draw [thick, blue](0,10)--(7.55,0)node[below]{\tiny $\mathcal{L}_{\w(\pblp^1(\lambda_1))}$};
		\end{tikzpicture}
		\caption{A solution set at the breakpoint $\lambda_1$, $S(\pblp^1(\lambda_1 )) =\left\{x^1, x^5, x^2\right\}$ is different from $S(\pblp^1(\lambda_1 - \varepsilon)) = \left\{x^1, x^6, x^5, x^2\right\}$ and $S(\pblp^1(\lambda_1 + \varepsilon))=\left\{x^1, x^4, x^5, x^2\right\}$, while $\w(\pblp^1(\lambda_2))$ shows the case of a tie where $S(\pblp^1(\lambda_2)) =\left\{x^4, x^2\right\} = \left\{x^3, x^2\right\}$. Note that feasible solutions such as $x_1, x_2, x_3$, and $x_4$ map to the extreme nondominated images, $y_1,y_2,y_3$, and $y_4$, respectively. \label{fig6}}
	\end{figure*}

	\begin{proposition}\label{prop7}
		If $\lambda_i$ is a breakpoint of a $\pblp^j$ with a corresponding weight set $\w(\pblp^j (\lambda_i))$,  there exists at least one extreme weight in $\w(\tolp)$ on the intersection of $\mathcal{L}_{\w(\pblp^j (\lambda_i))}$ and $\w(\tolp)$. 
	\end{proposition}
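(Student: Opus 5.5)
We argue by contraposition. Suppose the segment $\mathcal{L}_{\w(\pblp^j(\lambda_i))}$ contains no extreme weight of the decomposition of $\w(\tolp)$, i.e.\ no vertex of any full-dimensional component $\w(y)$, $y\in Y_\en(\tolp)$. We then show that a minimal solution set of $\pblp^j(\lambda)$ is constant for all $\lambda$ in a neighbourhood of $\lambda_i$, so $\lambda_i$ is not a breakpoint. The argument uses the correspondence between weights of $\ws(\pblp^j(\lambda),w)$ and weights on $\mathcal{L}_{\w(\pblp^j(\lambda))}$ (Theorems~\ref{thm2}, \ref{thm3}), together with the description of these segments in Remarks~\ref{remark1} and \ref{remark2}: a pencil of segments through $(0,1)$ with slope $-(1+\lambda)$ in Case I, and parallel segments $w_1^*+w_2^*=\frac{1}{1+\lambda}$ in Case II.

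\emph{Step 1 (combinatorial type of the segment).} The decomposition of $\w(\tolp)$ into the finitely many polytopes $\w(y)$ (Proposition~\ref{prop1}) is a polyhedral subdivision. Its $0$-dimensional cells are the extreme weights, and its $1$-dimensional cells are edges lying on finitely many lines. If $\mathcal{L}_{\w(\pblp^j(\lambda_i))}$ avoids all extreme weights, then:
\begin{enumerate}[label=(\alph*)]
\item it cannot contain a whole edge of the subdivision, since that edge's endpoints are extreme weights;
\item every edge it meets, it crosses transversally at a relative-interior point.
\end{enumerate}
Hence the segment passes through a well-defined ordered sequence of components $\w(y^{1}),\dots,\w(y^{r})$, and each consecutive pair $\w(y^{l}),\w(y^{l+1})$ meets it in a single crossing point. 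The endpoints of the segment lie on the boundary of $\w(\tolp)$ and are not extreme weights either. In Case I the common endpoint $(0,1)$ is itself a vertex of $\w(\tolp)$, hence an extreme weight, so the statement is trivially true there. This observation should be noted explicitly; it may also indicate that the intended statement concerns extreme weights other than the fixed endpoint. I would handle this by restricting attention to the part of the segment with $w_1^*>0$.

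\emph{Step 2 (stability under perturbation).} This is the main obstacle. We must show that the sequence $y^1,\dots,y^r$ is the same for all segments $\mathcal{L}_{\w(\pblp^j(\lambda))}$ with $|\lambda-\lambda_i|<\varepsilon$. The segments depend continuously on $\lambda$ in the Hausdorff metric, and there are only finitely many extreme weights, so some $\varepsilon$-neighbourhood of segments still avoids all of them. For each edge crossed transversally, transversality is an open condition: a nearby segment crosses the same edge at a nearby interior point. Conversely, any edge not met by $\mathcal{L}_{\w(\pblp^j(\lambda_i))}$ lies at positive distance from it and is therefore not met by nearby segments. The endpoints on $\partial\w(\tolp)$ move continuously within the same boundary edge of the same component, because they do not hit an extreme weight. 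Consequently the ordered list of components met, including those met only in their interior, is constant on $(\lambda_i-\varepsilon,\lambda_i+\varepsilon)$.

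\emph{Step 3 (conclusion).} By Theorems~\ref{thm2}/\ref{thm3} and Proposition~\ref{prop1}, the extreme nondominated images of $\pblp^j(\lambda)$ are exactly the images of those $y^l$ whose component meets $\mathcal{L}_{\w(\pblp^j(\lambda))}$ in a segment of positive length, which is all of them by Step 1. Therefore the minimal solution set $S(\pblp^j(\lambda))$, consisting of one preimage of each $y^l$, is the same for $\lambda_i-\varepsilon$, for $\lambda_i$, and for $\lambda_i+\varepsilon$. This contradicts $\lambda_i$ being a breakpoint. The delicate points are the boundary behaviour in Step 2, namely the segment endpoints and the exceptional weights with $w_1^*=0$ or $w_3^*=1$ excluded in the theorems, and making the transversality argument rigorous.
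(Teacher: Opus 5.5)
Your proposal is correct and is essentially the paper's argument: assuming no extreme weight lies on the segment (with the pencil point $(0,1)$ set aside in Case I, exactly as the paper does), the set of components whose interior the segment meets is locally constant in $\lambda$, so a minimal solution set is locally constant, contradicting the breakpoint property. Your transversality and Hausdorff-continuity reasoning, together with the appeal to Proposition~\ref{prop1}, part iii, spells out what the paper compresses into ``relative interior and compactness.'' The one spot to tighten is Case I: edges of components incident to $(0,1)$ meet every segment, so your ``positive distance'' step should be replaced there by noting that the segment's direction at $(0,1)$ lies strictly between consecutive edge directions (otherwise the segment would contain the far endpoint of such an edge, which is an extreme weight), and this is an open condition in $\lambda$.
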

	\begin{proof}
		The proof uses the following definition. Let $\lambda \geq 0$ be the parameter, $Y_{\en}(\tolp)$ denote the set of extreme nondominated images of TOLP, and $\w(\pblp^j(\lambda))$ be the weight set of $\pblp^j(\lambda)$. We define
		\begin{align*}
			\nu(\lambda) \coloneqq \left \{\w(y): y \in Y_\en(\tolp), \interior (\w(y)) \cap \w(\pblp^j (\lambda_i)) \neq \emptyset, j =1, 2 \right\}.
		\end{align*}
		
		Let $\lambda_i$ and $\lambda_{i+1}$ be two consecutive breakpoints. We consider both cases separately.
		
		\textbf{Case 1: $j = 1$}
		By Proposition~\ref{prop3}, it is trivial that the extreme weight $(0,1,0) \in \w(\tolp)$ is contained in $\w(\pblp^1(\lambda))$ for all $\lambda \geq 0$, whether $\lambda$ is a breakpoint or not. 
		%To ensure unique weight sets for $\w(\pblp^1(\lambda))$, we exclude $(0,1,0)$ for this case. 
		However, this proposition still holds for $\pblp^1$ when we exclude $(0,1,0)$ from the weight sets of $\w(\pblp^1(\lambda))$.
		
		By definition of a breakpoint, $S(\pblp^1(\lambda_i)) \neq S(\pblp^1(\lambda_{i+\varepsilon}))$.
		Suppose, for contradiction, that no extreme weight of $\w(\tolp)$ lies on the intersection of $\mathcal{L}_{\w(\pblp^1(\lambda_i))} \setminus \{(0,1)\}$ and $\w(\tolp)$. Then the line segment $\mathcal{L}_{\w(\pblp^1(\lambda_i))} \setminus \{(0,1)\}$ passes entirely through the relative interiors of weight set components in $\nu(\lambda_i)$. By the definition of relative interior and the compactness of $\nu(\lambda_i)$, there exists $\delta > 0$ such that
		\begin{align*}
			\nu (\lambda_i +\delta)= \nu (\lambda_i -\delta) = \nu(\lambda_{i}).
		\end{align*}
		This implies that the line segments $\mathcal{L}_{\w(\pblp^2(\lambda))}$ for $\lambda = \left\{i-\delta, i, i+\delta\right\}$ passes through same set of weight set components. Therefore, 
		\begin{align*}
			S (\pblp^1 (\lambda_i +\delta))= S (\pblp^1 (\lambda_i -\delta)) = S (\pblp^1(\lambda_i)).
		\end{align*}
		This contradicts $\lambda_i$ being a breakpoint.
		
		\textbf{Case 2: $j = 2$}
		By Proposition~\ref{prop5}, all weight sets $\w(\pblp^2(\lambda))$ are unique for $\lambda \geq 0$. The definition of a breakpoint gives $S(\pblp^2(\lambda_i)) \neq S(\pblp^2(\lambda_{i+1}))$.
		
		Suppose, for contradiction, that no extreme weight of $\w(\tolp)$ lies on the intersection of $\mathcal{L}_{\w(\pblp^2(\lambda_i))}$ and $\w(\tolp)$. Then $\mathcal{L}_{\w(\pblp^2(\lambda_i))}$ passes through relative interiors of weight set components in $\nu(\lambda_i)$. By the relative interior properties and compactness of $\nu(\lambda_i)$, there exists $\delta > 0$ such that
		\begin{align*}
			\nu (\lambda_i +\delta)= \nu (\lambda_i -\delta) = \nu(\lambda_{i}).
		\end{align*}
		This implies that the line segments $\mathcal{L}_{\w(\pblp^2(\lambda))}$ for $\lambda = \left\{i-\delta, i, i+\delta\right\}$ passes through the same set of weight set components. Therefore,  
		\begin{align*}
			S (\pblp^2 (\lambda_i +\delta))= S (\pblp^2 (\lambda_i -\delta)) = S (\pblp^2(\lambda_i)).
		\end{align*}
		This contradicts $\lambda_i$ being a breakpoint.
	\end{proof}
	
	Note that not every extreme weight leads to a unique breakpoint.
	For a given breakpoint $\lambda_i$, there might be several extreme weights in the line segment $\mathcal{L}_{\w(\pblp^j (\lambda_i))}$.
	We demonstrate this with Example~\ref{eg2}.
	\begin{example}	\label{eg2}
		Consider the following linear triobjective linear program,
		\begin{alignat*}{3}
			&\mathrm{min} \quad && (x_1, x_2, x_3)^\top \\
			&\mathrm{s.t.} \quad && x \in X\coloneqq \{ x \in \mathbb{R}^3_\geqq \!: \quad && 2x_1 + 3x_2 + 5x_3  \geq 40, \\
			& \quad && && 2x_1 + 15x_2 - 15x_3 \geq 0, \\
			& \quad && && 2x_1 - x_2 + x_3 \geq 0, \\
			& \quad && && 2x_1 - x_2 - 15x_3 \leq 0 \}
		\end{alignat*}
	\end{example}
	
	The problem has the following nondominated image set which is a convex hull of three extreme nondominated images, $y^1 =(5,10,0)$, $y^2=(0,5,5)$, and $y^3 = (15,0,2)$ as shown in Figure~\ref{fig7}. 
	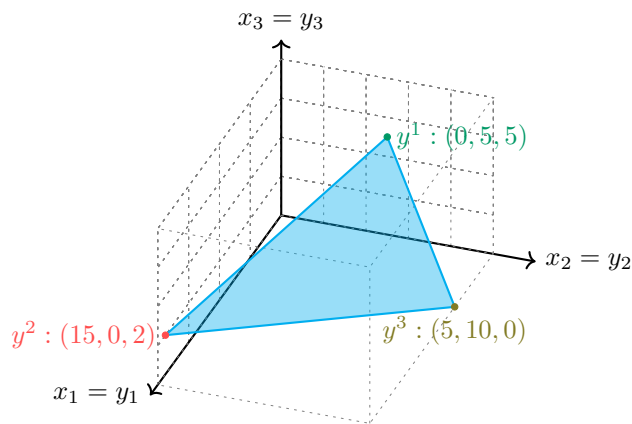
\begin{figure*}[h]
		\centering
		\tdplotsetmaincoords{60}{110}
		\begin{tikzpicture}[scale=0.3,tdplot_main_coords]
			\tikzset{
				cube/.style={very thick,black},
				grid/.style={ultra thin, gray,  dash pattern=on 1pt off 2pt},  % DEFINE grid style here
				axis/.style={->,blue,thick}
			}
			% Draw the grid first (in the background)
			\foreach \y in {0,2,4,6,8,10}
			\foreach \z in {0,2,4,6,8}
			{
				\draw[grid] (0,\y,\z) -- (0,\y,8);    % Vertical grid lines (constant y)
				\draw[grid] (0,0,\z) -- (0,10,\z);    % Horizontal grid lines (constant z)
			}
			% Draw the grid first (in the background)
			\foreach \x in {0,4,8,12,16}
			\foreach \z in {0,2,4,6,8}
			{
				\draw[grid] (\x,0,0) -- (\x,0,8);    % Vertical lines along x-axis
				\draw[grid] (0,0,\z) -- (16,0,\z);    % Horizontal lines along z-axis
			}
			
			% Draw axes next
			\draw [thick, ->](0,0,0) -- (17,0,0) node [left] {$x_1=y_1$};
			\draw [thick, ->](0,0,0) -- (0,12,0) node [right] {$x_2=y_2$};
			\draw [thick, ->](0,0,0) -- (0,0,9) node [above] {$x_3=y_3$};
			
			% Draw the cube outlines with thinner dashes
			\draw[gray, ultra thin, dash pattern=on 1pt off 2pt] (0,0,0) -- (0,10,0) -- (16,10,0) -- (16,0,0) -- cycle;
			\draw[gray, ultra thin, dash pattern=on 1pt off 2pt] (0,0,8) -- (0,10,8) -- (16,10,8) -- (16,0,8) -- cycle;
			\draw[gray, ultra thin, dash pattern=on 1pt off 2pt] (16,0,0) -- (16,0,8);
			\draw[gray, ultra thin, dash pattern=on 1pt off 2pt] (0,10,0) -- (0,10,8);
			\draw[gray, ultra thin, dash pattern=on 1pt off 2pt] (16,10,0) -- (16,10,8);
			
			% Draw the blue polytope LAST so it appears on top
			\fill[cyan, opacity=0.4] (5,10,0) -- (0,5,5) -- (15,0,2) -- cycle;
			\draw [cyan, thick] (5,10,0) -- (0,5,5) -- (15,0,2) -- cycle;
			
			% Draw the points last so they appear on top of everything
			\draw [olive!80!black]plot[mark=*,mark size=4pt] coordinates {(5,10,0)}node[below] {$y^3 :(5,10,0)$};
			\draw [green!60!blue]plot[mark=*,mark size=4pt] coordinates {(0,5,5)} node[right] {$y^1: (0,5,5)$};
			\draw [red!70] plot[mark=*,mark size=4pt] coordinates {(15,0,2)} node[left] {$y^2 : (15,0,2)$};
		\end{tikzpicture}
		\caption{An illustration of the nondominated image set of the triobjective problem.\label{fig7}}
	\end{figure*}
	
	The weight set of TOLP, Example~\ref{eg2} and the line segment representing the weight set of the corresponding $\pblp^2$, where $\lambda = 1$ are shown below. 
	\definecolor{emerald}{rgb}{0.21, 0.78, 0.9}
	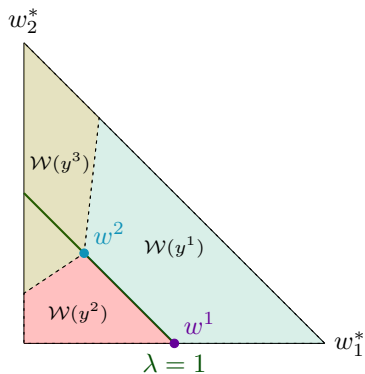
\begin{figure*}[h]
		\centering
		\begin{tikzpicture}[scale=0.8]
			\draw (0,0) -- (5,0) node[right]{$w_1^*$};
			\draw (0,0) -- (0,5) node[anchor=south]{$w_2^*$};
			\draw plot coordinates {(0,5) (5, 0)};
			\draw [thick, green!30!black] plot coordinates {(0,2.5) (2.5, 0)} node[below] {$\lambda = 1$};
			\draw [dash pattern=on 1pt off 2pt,fill=red, fill opacity = 0.25] plot coordinates {(1, 3/2) (0, 5/6) (0,0) (2.5,0)};
			\draw [dash pattern=on 1pt off 2pt, fill=green!60!blue, fill opacity = 0.15] plot coordinates { (1, 3/2) (5/4,15/4) (5, 0) (2.5, 0)};
			\draw [dash pattern=on 1pt off 2pt, fill=olive, fill opacity = 0.25] plot coordinates {(0, 5/6) (1, 3/2) (5/4, 15/4) (0,5)};
			%\draw plot[mark=*] coordinates {(2.5, 7.5)};
			\node at (.6,3) {\scriptsize $\w(y^3)$};
			\node at (0.9,0.5) {\scriptsize $\w(y^2)$};
			\node at (2.5,1.6) {\scriptsize $\w(y^1)$};
			\draw [cyan!70!black] plot[mark=*] coordinates {(1, 3/2)}node[above right] {$w^2$};
			\draw [red!40!blue] plot[mark=*] coordinates {(2.5, 0)} node[above right] {$w^1$};
		\end{tikzpicture}
		\caption{The line segment $\mathcal{L}_{\w(\pblp^2(1))}$ of $\pblp^2(\lambda)$ at $\lambda =1$ intersects with $\w(\tolp)$ at two extreme weights $w^1$ and $w^2$. \label{fig8}}
	\end{figure*}
	We observe from Figure~\ref{fig8} that we have two different extreme weights $w^1 =\left(\frac{1}{2},0,\frac{1}{2} \right)$ and $w^2 = \left(\frac{1}{5},\frac{3}{10},\frac{1}{2}\right)$ which correspond to the same breakpoint $\lambda_1 = 1$. This is also due to the special case of a tie in the solution sets of $\pblp^2(\lambda)$ before and after the breakpoint, which was discussed earlier.\\
	Regarding the nondominated image set of $\pblp^j$, this scenario implies that the extreme nondominated images~$y_{\lambda}^2$ and $y_{\lambda}^1$ asymptotically converge to a common nondominated image at $\lambda =1$.
	We illustrate this by using the image set of the corresponding non-parametric BOLP $\pblp^2(\lambda)$ with varying $\lambda$, $\lambda \coloneqq \left \{0, 1, 2\right\}$. Consider $\pblp^2$ of the same Example~\ref{eg2} i.\,e., 
	\begin{alignat*}{2}
		& \mathrm{\min} \quad &&
		\begin{pmatrix}
			x_1 + \lambda x_3\\
			x_2 + \lambda x_3
		\end{pmatrix}\\
		& \mathrm{s.\ t.} \quad && x \in X.
	\end{alignat*}
	It has an image set with the axes $y_1 = x_1 + \lambda x_3$ and $y_2 = x_2 + \lambda x_3$ that changes continuously with the variation of parameter $\lambda$ as shown in Figure~\ref{fig9}. Note that the two extreme nondominated images~$y_{\lambda}^2$ and $y_{\lambda}^1$ converge to the same point, i.e., $y_{1}^2 = y_{1}^1$ at $\lambda = 1$.

	\begin{figure*}[h]
		\begin{subfigure}[b]{0.3\textwidth}
			\centering
			\begin{tikzpicture}[scale=0.16]
				\draw[->] (0,0) -- (20,0) node[below] {\tiny $y_1 = x_1 + \lambda x_3$};
				\draw[->] (0,0) -- (0,20) node[above]{\tiny $y_2 = x_2 + \lambda x_3$};
				\draw [draw=none,fill= gray, fill opacity = 0.35] plot coordinates {(0, 5)(5, 10) (15,0)};
				\draw [thick, red] plot coordinates {(0, 5) (15,0)};
				\draw [red!50!blue]plot[mark=*,mark size=10pt] coordinates {(0, 5)} node[below right] {\scriptsize $y_{0}^1$};
				\draw [blue]plot[mark=*,mark size=10pt] coordinates {(5, 10)} node[left] {\scriptsize $y_{0}^3$};
				\draw [emerald]plot[mark=*,mark size=10pt] coordinates {(15,0)} node[above] {\scriptsize $y_{0}^2$};
				\draw[-Stealth] [red!50!blue](0,5) --(2, 7) node[right] {};
				\draw[-Stealth] [emerald](15,0) --(17,2) node[right] {};
			\end{tikzpicture}
			\caption{$\lambda = 0$}
			\label{fig8a}
		\end{subfigure}
		\hfill
		\begin{subfigure}[b]{0.3\textwidth}
			\centering
			\begin{tikzpicture}[scale=0.16]
				\draw[->] (0,0) -- (20,0) node[below] {\tiny$y_1 = x_1 + \lambda x_3$};
				\draw[->] (0,0) -- (0,20) node[above]{\tiny $y_2 = x_2 + \lambda x_3$};
				\draw [thick, red](5, 10)--(17, 2);
				\draw [red!50!blue]plot[mark=*,mark size=10pt] coordinates {(5, 10)} node[above] {\scriptsize $y_{1}^1$};
				\draw [blue]plot[mark=*,mark size=10pt] coordinates {(5, 10)} node[left] {\scriptsize $y_{1}^3$};
				\draw [emerald]plot[mark=*,mark size=10pt] coordinates {(17,2)} node[above] {\scriptsize $y_{1}^2$};
				\draw[-Stealth] [red!50!blue](5,10) --(7,12) node[right] {};
				\draw[-Stealth] [emerald](17,2) --(19,4) node[right] {};
			\end{tikzpicture}
			\caption{$\lambda = 1$}
			\label{fig8b}
		\end{subfigure}
		\hfill
		\begin{subfigure}[b]{0.3\textwidth}
			\centering
			\begin{tikzpicture}[scale=0.16]
				\draw[->] (0,0) -- (24,0) node[below] {\tiny $y_1 = x_1 + \lambda x_3$};
				\draw[->] (0,0) -- (0,20) node[above]{\tiny $y_2 = x_2 + \lambda x_3$};
				\draw [draw=none,fill=lightgray, fill opacity = 0.3] plot coordinates {(5,10) (10,15)(19,4)};
				\draw [thick, red] plot coordinates {(5,10) (19,4)};
				\draw [red!50!blue]plot[mark=*,mark size=10pt] coordinates {(10, 15)} node[above] { \scriptsize $y_{2}^1$};
				\draw [blue]plot[mark=*,mark size=10pt] coordinates {(5, 10)} node[left] {\scriptsize $y_{2}^3$};
				\draw [emerald]plot[mark=*,mark size=10pt] coordinates {(19,4)} node[below] {\scriptsize $y_{2}^2$};
				\draw[-Stealth] [red!50!blue](10,15) --(12,17) node[right] {};
				\draw[-Stealth] [ emerald](19,4) --(21,6) node[right] {};
			\end{tikzpicture}
			\caption{$\lambda = 2$}
			\label{fig8c}
		\end{subfigure}
		\caption{An illustration of change in the nondominated set for $\pblp^2(\lambda)$. (\ref{fig8a}): The images $y_{0}^2$ and $y_{0}^3$ are the extreme nondominated images for $\pblp^2(0)$. (\ref{fig8b}): The images $y_{1}^2$ and $y_{1}^1$ are equivalent and $Y_\en (\pblp^2(1)) = \left\{y_{1}^2, y_{1}^3\right\} =\left\{y_{1}^1, y_{1}^3 \right\}$. (\ref{fig8c}): The image $y_{2}^2$ is no longer an extreme nondominated image for $\pblp^2(2)$ and $Y_\en (\pblp^2(2)) = \left\{y_{2}^1, y_{2}^3\right\}$. \label{fig9}} 
	\end{figure*}
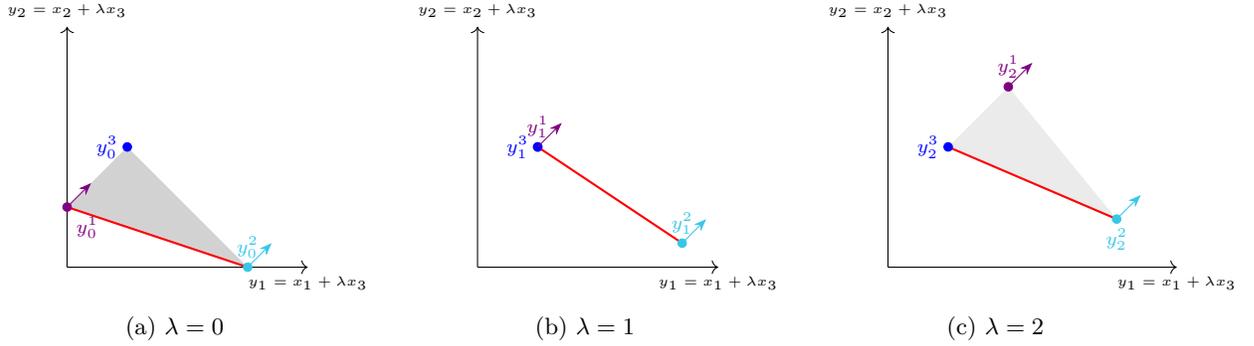
	
	\begin{corollary}\label{coro1}
		The number of breakpoints is finite. 
	\end{corollary}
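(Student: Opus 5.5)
The plan is to derive the corollary directly from Proposition~\ref{prop7}: every breakpoint forces an extreme weight of the weight set decomposition of TOLP to lie on $\mathcal{L}_{\w(\pblp^j(\lambda_i))}$. It then suffices to show two things. First, there are only finitely many extreme weights. Second, each extreme weight (apart from the special weight $(0,1,0)$ in Case I) lies on $\mathcal{L}_{\w(\pblp^j(\lambda))}$ for at most one value of $\lambda$. Together these give an injection from breakpoints into a finite set.

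\textbf{Finiteness of extreme weights.} The feasible set $X$ is a polyhedron with finitely many extreme points, so $Y_\en(\tolp)$ is finite. By Proposition~\ref{prop1}, the weight set decomposition therefore has finitely many components $\w(y)$, $y\in Y_\en(\tolp)$, and each is a convex polytope. A polytope has finitely many extreme points, so the set $E$ of all extreme weights in $\w(\tolp)$ is finite.

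\textbf{Each extreme weight determines at most one $\lambda$.} For $j=2$ this follows from Proposition~\ref{prop5}: the sets $\w(\pblp^2(\lambda))$ are pairwise disjoint, so a given $w\in E$ lies in at most one of them. Explicitly, $w^*_3=\lambda/(1+\lambda)$ fixes $\lambda$. For $j=1$, Proposition~\ref{prop3} shows that two distinct segments meet only in $(0,1,0)$. In the proof of Proposition~\ref{prop7}, Case~1 already excludes $(0,1,0)$: the extreme weight produced there lies on $\mathcal{L}_{\w(\pblp^1(\lambda_i))}\setminus\{(0,1)\}$. Any such $w$ determines $\lambda$ uniquely, via $\lambda = w^*_3/w^*_1$, or equivalently via the line equation $w^*_1(1+\lambda)+w^*_2=1$ with $w^*_1>0$.

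\textbf{Conclusion.} Choose for each breakpoint $\lambda_i$ an extreme weight $e(\lambda_i)\in E$ on its segment, excluding $(0,1,0)$ when $j=1$. By the uniqueness step, $e$ is injective, so the number of breakpoints is at most $|E|<\infty$.

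The main obstacle is the $j=1$ case. There one must confirm that Proposition~\ref{prop7} really supplies an extreme weight other than $(0,1,0)$, since that point lies on every segment and would break injectivity. I would rely on the reformulated contradiction argument in Case~1, which works with the punctured segment. If needed, I would add a brief justification: if only $(0,1,0)$ were an extreme weight on the segment, the punctured segment would meet only relative interiors of components, and small perturbations of $\lambda$ would not change $\nu(\lambda)$.
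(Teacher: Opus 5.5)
Your proposal is correct and follows the paper's route: Proposition~\ref{prop7} assigns each breakpoint an extreme weight on its segment, and the weight set decomposition has only finitely many extreme weights. You also spell out the injectivity step that the paper's phrase ``bounded by the number of extreme weights'' leaves implicit: $w^*_3=\lambda/(1+\lambda)$ fixes $\lambda$ for $j=2$, and $\lambda=w^*_3/w^*_1$ fixes it on the punctured segment for $j=1$, where $(0,1,0)$ must be excluded because it lies on every segment.
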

	\begin{proof}
		From Proposition~\ref{prop7}, we know that the number of breakpoints is bounded by the number of extreme weights in the weight set of TOLP. Moreover, the number of extreme weights in $\w(\tolp)$ is finite due to the finiteness of the extreme points of all weight set components. Thus, the number of breakpoints is finite. 
	\end{proof}
	
	\section{Algorithms}\label{sec4}
	
	As shown in Proposition~\ref{prop7}, a minimal solution set of $\pblp^j$ can be obtained by computing a minimal solution set of $\tolp$.
	Thus, established algorithms from multi-objective optimization can be used directly to compute minimal solution sets for $\pblp^j$.
	However, when solving $\pblp^j$, we are also interested in the set of breakpoints.
	In this section, we propose two approaches for computing the breakpoints of $\pblp^j$.
	The first approach uses a minimal solution set~$S$ that is computed by any existing algorithm for $\tolp$.
	For each solution $x$ in $S$, two linear programs are solved to determine the parameter interval consisting of all values of $\lambda$ for which $x$ is in a minimal solution set of $\pblp^j(\lambda)$.
	The union of the interval boundaries of all solutions in $S$ is the set of breakpoints.
	In contrast, the second approach requires the usage of an algorithm for $\tolp$ that computes its weight set decomposition by design (for example,~\cite{Przybylski2010}).
	Then, the aforementioned intervals can be computed without additional overhead.
	
	\subsection{Breakpoint enumeration algorithm}
	
	%Although we have shown that the minimal solution set of the parametric biobjective linear programs and the corresponding triobjective linear program is the same, the goal of solving $\pblp$ is to compute the set of breakpoints. 
	In this algorithm, we want to find the set of breakpoints of~$\pblp^1$ and~$\pblp^2$ using the weight set components of all $y \in Y_\en$ from the corresponding triobjective linear program. 
	The basic idea of the algorithm is to first use some existing algorithm to compute $Y_\en(\tolp)$. 
	Then, for every extreme nondominated image $y$, we look at its weight set component and compute its boundaries to find the parameter intervals with respect to an optimal solution of $\pblp^j$.
	
	Every weight set component~$\w(y)$ corresponds to a solution $x \in S$ such that $x$ is optimal for $\pblp^j(\lambda)$ for some $\lambda \geq 0$ (see the proof of Proposition~\ref{prop6}).
	In Remarks~\propitemref{remark1}{remark1:parttwo}~and~\propitemref{remark2}{remark2:parttwo}, we have established that for every parameter value $\lambda \geq 0$, the weight sets of $\pblp^j(\lambda)$ are line segments intersecting $\w(\tolp)$. 
	Moreover, each weight set component of $\w(\tolp)$ is intersected by at least one line segment. 
	In particular, every weight set component is bounded by two specific line segments that mark its start and end. 
	
	For each weight set component $\w(y)$, we are interested in finding these two bounding line segments and their corresponding parameter values $\lambda_\ell$ and $\lambda_u$.
	The two bounding line segments define the interval $\left[\lambda_\ell, \lambda_u\right]$ where $\lambda_\ell$ is the minimum and $\lambda_u$ is the maximum parameter value, as shown in Figure~\ref{fig10}. 
	This parameter interval consists of the parameter values for which the corresponding solution $x \in S$ of $y$ is optimal for $\pblp^j(\lambda)$ where $\lambda \in [\lambda_\ell, \lambda_u]$. 
	
	More precisely, for each weight set component $\w(y)$ of $\w(\tolp)$, we solve the following program for $\pblp^1$:
	\begin{alignat*}{3} \label{eq:minimumlambda1}
		& \min \quad && \lambda \\
		& \text{s.t.} \quad && w \in \w(y), \\
		&  && w \in \w(\pblp^1(\lambda)), && \tag{$P^1_\wsc(\lambda)$} \\
		&  && w \in \mathbb{R}_{\geqq}^3 \setminus \{(0,1,0)\}
	\end{alignat*}
	to determine the lower parameter bound $\lambda_\ell$, and solve the corresponding maximization program to find $\lambda_u$.
	%And if we use $(0,1,0)$ as an extreme point of any weight set component then the resulting parameter interval is $[0,\infty)$. 
	We exclude $(0,1,0)$ as an extreme point of any weight set component because it results in a parameter interval of $[0,\infty)$ for some solutions, which can be misleading as the actual interval is bounded. 
	Instead, for these weight set components, we use other extreme weights to determine the correct parameter interval.
	%Additionally, it serves as the shared point amongst all the line segments $\mathcal{L}_{\w(\pblp^1(\lambda))}$ for all $\lambda \geq 0$ (see case $j =1$ in the proof of Proposition~\ref{prop7}). 

	For $\pblp^2$, we solve the analogous program for each weight set component $\w(y)$:
	\begin{alignat*}{3} \label{eq:minimumlambda2}
		& \min \quad && \lambda \\
		& \text{s.t.} \quad && w \in \w(y), \\
		&  && w \in \w(\pblp^2(\lambda)), && \tag{$P^2_\wsc(\lambda)$} \\
		&  && w \in \mathbb{R}_{\geqq}^3.
	\end{alignat*}
	
	We formulate the weight set component $\w(y)$ as constraints in $P^j_\wsc(\lambda)$ (cf.~\cite{Benson1998}):
	\begin{equation} \label{eq:weightsetcomponent}
		\w(y) = \left\{ (w_1, w_2, w_3): \quad
		\begin{aligned}
			& \begin{alignedat}{2}
				A^\top v & - & C^\top w & \geqq 0 \\
				b^\top v & - & y^\top w & = 0 \\
				& & \mathbf{1}^\top w & = 1 \\
			\end{alignedat}\\
			& v \in \mathbb{R}^m_\geqq, w \in \mathbb{R}_{\geqq}^3
		\end{aligned}
		\right\}
	\end{equation}
	where $C\in \mathbb{Q}^ {3 \times n}$ consists of rows $c_1, c_2$ and $d_1$.
	For notational simplicity in describing the algorithm, we temporarily define a weight set component $\mathcal{W}(y)$ as 
	\begin{align}\label{eqn:10}
		\w(y)  \coloneqq \left\{(w_1,w_2, w_3): Pw \geqq q, w \in \mathbb{R}^{m+3}_\geqq\right\}
	\end{align}
	where $P \in \mathbb{Q}^{n+4\times m+3}$ represents a coefficient matrix and $q \in \mathbb{Q}^{n+4} $ is a right-hand side vector. 
	
	We begin with the problem $\pblp^2$ because it is easier and will be followed by its adaptation to $\pblp^1$. 
	For $\pblp^2$, we substitute the description of the line segment, $\mathcal{L}_{\w(\pblp^2(\lambda))}$ from Remark~\propitemref{remark2}{remark2:parttwo} for $\w(\pblp^2(\lambda))$ and use the definition of $\w(y)$ from (\ref{eqn:10}) into the program~\ref{eq:minimumlambda2} to obtain
	\begin{alignat*}{5} \label{eq:minimumlambdapblpa2}
		& \min        \quad && \lambda && && && \\
		& \text{s.t.} \quad && Pw &&\geqq q,  && &&\\
		&             \quad && w_1  + w_2 &&= \frac{1}{1+\lambda},&& && \tag{$\mathcal{P}^2(\lambda)$} \\
		&             \quad && \lambda \geq 0, && w \in \mathbb{R}_{\geqq}^{m+3}.&&&&
	\end{alignat*}
	%The weight $(0,0,1) \in \w(\tolp)$ is excluded for the same reason as described in Theorem~\ref{thm3} and Remark~\propitemref{remark2}{remark2:partone}. 
	%The weight $(0,0,1) \in \w(\tolp)$ is excluded because, based on the construction, the parameter $\lambda$ is undefined if $w_3^* = 1$. 
	However, we reformulate the above program to the following;
	\begin{alignat*}{5} \label{eq:minimumlambdapblp2}
		& \max        \quad && w_1  + w_2 && && && \\
		& \text{s.t.} \quad && Pw \geqq q && && && \tag{$\mathcal{P}^2_\wsc(\lambda_\ell)$} \\
		&             \quad &&  w \in \mathbb{R}_{\geqq}^{m+3}.&& &&&&
	\end{alignat*}
	by incorporating the constraint $w_1  + w_2 = \frac{1}{1+\lambda}$ directly into the objective function, thereby removing the variable $\lambda$. This is valid because the function $\frac{1}{1+\lambda}$ is strictly decreasing for $\lambda \geq 0$ which means minimizing $\lambda$ is equivalent to maximizing $w_1  + w_2 $. 
	After finding the maximum value, say $s_\mathrm{max}$, the corresponding value of $\lambda_\ell$ is calculated as $\lambda_\ell = \frac{1}{s_\mathrm{max}}-1$.\\
	Subsequently, we solve the minimization variant to determine $\lambda_u$, as illustrated in Figure~\ref{fig10:a}.
	We observe that the optimal value of this program can be $0$, which occurs because the maximization variant of the program~\ref{eq:minimumlambdapblpa2} is unbounded. This implies that the parameter value is approaching infinity.
	Therefore, we state that the corresponding solution remains optimal over the parameter interval $[\lambda_\ell, \infty)$ in such cases.
	%Note that the maximization variant of this program can be unbounded, particularly as $\lambda \to \infty$. When the optimal value is $0$, this implies that the parameter value approaches infinity. 

	\begin{figure*}[h]
		\begin{subfigure}[b]{0.49\textwidth}
			\centering
			\begin{tikzpicture}[scale=0.8]
				\draw [-Stealth] (0,0) -- (0,6) node[above left]{$w_2$};
				\draw [-Stealth] (0,0) -- (6, 0) node[below right]{$w_1$};
				\draw [fill=green, fill opacity = 0.25,ultra thin, dash pattern=on 1pt off 2pt] plot coordinates {(1,0) (1,3/2) (35/10,0)};
				\node at (1,-0.2) {$\lambda_u$};
				\node at (35/10,-0.2) {$\lambda_\ell$};
				\node at (2.1,0.3) {\tiny $\w(y^4)$};
				\draw [thick, blue!70] plot coordinates {(0,35/10) (35/10, 0)} node[above] {\tiny $ \mathcal{L}^2_{\lambda_\ell}$};
				\draw [thick, violet] plot coordinates {(0, 1)(1,0)} node[above left, xshift=4.5pt, yshift=7pt] {\tiny $\mathcal{L}^2_{\lambda_u}$};
			\end{tikzpicture}
			\caption{}
			\label{fig10:a}
		\end{subfigure}
		\hfill
		\begin{subfigure}[b]{0.49\textwidth}
			\centering
			\begin{tikzpicture}[scale=0.8]
				\draw [-Stealth] (0,0) -- (0,6) node[above left]{$w_2$};
				\draw [-Stealth] (0,0) -- (6, 0) node[below right]{$w_1$};
				\draw [fill=green, fill opacity = 0.25,ultra thin, dash pattern=on 1pt off 2pt] plot coordinates {(1,0) (1,3/2) (35/10,0)};
				\draw [thick, blue!70] plot coordinates {(0,5) (35/10,0)} node[above] {\tiny $\mathcal{L}^1_{\lambda_\ell}$};
				\draw [thick, violet] plot coordinates {(0,5) (1,0)} node[above left] {\tiny $\mathcal{L}^1_{\lambda_u}$};
				\node at (1,-0.2) {$\lambda_u$};
				\node at (35/10,-0.2) {$\lambda_\ell$};
				\node at (2.1,0.3) {\tiny $\w(y^4)$};
			\end{tikzpicture}
			\caption{}
			\label{fig10:b}
		\end{subfigure}
		\caption{%Weight set component $\w(y^1)$ with two line segments $\mathcal{L}^1_{\lambda_\ell}$,$\mathcal{L}^1_{\lambda_u}$ and $\mathcal{L}^2_{\lambda_\ell}$,$\mathcal{L}^22_{\lambda_u}$ representing $\mathcal{L}_{\w(\pblp^j(\lambda),w)}$ for $\lambda = \lambda_\ell, \lambda_u$ with respect to  $\pblp^2$ (\ref{fig10:a}) and  $\pblp^1$ (\ref{fig10:b}).}
		Weight set component $\w(y^1)$ with line segments $\mathcal{L}^j_{\lambda_\ell}$ and $\mathcal{L}^j_{\lambda_u}$ for $\pblp^2$ (\ref{fig10:a}) and $\pblp^1$ (\ref{fig10:b}), where $\mathcal{L}^j_\lambda = \mathcal{L}_{\w(\pblp^j(\lambda))}$ for $\lambda \in \left\{\lambda_\ell, \lambda_u\right\}$.}
	\label{fig10}
	\end{figure*}

	Similarly for $\pblp^1$, we substitute the description of the line segment, $\mathcal{L}_{\w(\pblp^1(\lambda))}$ from Remark~\propitemref{remark1}{remark1:parttwo} into the program~\ref{eq:minimumlambda1} to get
	\begin{align*}    
	& \begin{alignedat}{5} \label{eq:minimumlambdapblp1}
		& \mathrm{min} \quad & \lambda \\
		& \text{s.t.} &  Pw &\geqq q, \\
		&&   w_1(1 + \lambda) + w_2 & = 1,\\
		&& (w_1,w_2) & \neq (0,1)
	\end{alignedat} \\
	& \hphantom{\min\quad}w \in \mathbb{R}_{\geqq}^{m+3},\lambda\geq0.
	\end{align*}
	%The weight $(0,1,0) \in \w(\tolp)$ is excluded in this program for the same reason as described in Theorem~\ref{thm2} and Remark~\propitemref{remark1}{remark1:partone}.
	%approaching $0$ is equivalent to the parameter $\lambda$ approaching $\infty$.
	In this program, we cannot apply the direct approach used in $\pblp^2$ because in constraint $w_1(1 + \lambda) + w_2 = 1$, $\lambda$ is not additively separable from the weight variables. We therefore normalize the constraint by dividing by $2+\lambda$ to obtain:
	\begin{align*}
	\left(\frac{1+\lambda}{2+\lambda}\right) w_1 + \left(\frac{1}{2+\lambda} \right) w_2= \frac{1}{2+\lambda}.
	\end{align*}
	We then define $\ell_1$ and $\ell_2$ as
	\begin{align*}
	\ell_1 \coloneqq \frac{1+\lambda}{2+\lambda}; \quad \ell_2 \coloneqq \frac{1}{2+\lambda}
	\end{align*}	
	and reformulate the problem as
	\begin{align*}    
	& \begin{alignedat}{4}
		& \mathrm{min} \quad & \ell_1 \\
		& \text{s.t.} &  Pw &\geqq q, \\
		&&   \ell_1 w_1 + \ell_2 w_2 & = \ell_2, \\
		&&    \ell_1 + \ell_2 & = 1, \\
		&& (w_1,w_2) & \neq (0,1),
	\end{alignedat}\tag{$\mathcal{P}$} \\
	& \hphantom{\min\quad} w \in \mathbb{R}_{\geqq}^{m+3}, \ell \in \mathbb{R}_{\geqq}^{2}.
	\end{align*}
	Crucially, because $\ell_1 = \left(\frac{1+\lambda}{2+\lambda}\right)$ is a strictly increasing function of $\lambda \geq 0$, the objective of minimizing $\lambda$ is equivalent to minimizing $\ell_1$.
	However, the constraint $\ell_1w_1 +\ell_2w_2 =\ell_2$ in $\mathcal{P}$ remains non-linear, as does the exclusion of $(0,1,\mathbf{0})$. We overcome these problems by solving the linear program
	\begin{align*}
	& \begin{alignedat}{4}
		& \max \quad & \ell_1 & & & \\
		& \text{s.t.} &P^\top u & & &\geqq \begin{pmatrix}
			\ell_1 \\
			\ell_2 \\
			\mathbf{0}
		\end{pmatrix},  \\
		&& q^\top u & {}-{} &\ell_2 & = 0,\\
		&&  \ell_1 & {}+{} &\ell_2 & = 1, 
	\end{alignedat} \tag{$\mathcal{P}^1_\wsc(\lambda_\ell)$} \\
	& \hphantom{\max \quad}  u \in\mathbb{R}_{\leqq}^{n+4}, \ell \in \mathbb{R}_{\geqq}^{2},
	\end{align*}
	instead of $\mathcal{P}$,
	where $\mathbf{0}$ is the zero vector with $m+1$ entries.. 
	%The equivalence of the optimal function value of $\mathcal{P}$ and $\mathcal{P}^1_\wsc(\lambda)$ is shown in Proposition~\ref{prop7}. 

	\begin{proposition}
	Let $\ell_1^*$ be the optimal function value of $\mathcal{P}$. Then $\ell_1^*$ is also the optimal function value of the linear program $\mathcal{P}^1_\wsc(\lambda_\ell)$. \label{prop::optimality}
	\end{proposition}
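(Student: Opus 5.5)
The plan is to read $\mathcal{P}^1_\wsc(\lambda_\ell)$ as the LP dual certificate of a containment condition. Fix $\ell=(\ell_1,\ell_2)\geq 0$ with $\ell_1+\ell_2=1$; this corresponds to a unique $\lambda\geq 0$ through $\ell_1=\frac{1+\lambda}{2+\lambda}$. For fixed $\ell$ the problem is linear in $w$. Consider the linear program
\begin{align*}
\max\ \{\ell_1 w_1+\ell_2 w_2 : Pw\geqq q,\ w\geqq 0\}
\end{align*}
over $\w(y)$ as described in (\ref{eqn:10}). I would write down its dual with variables $u\in\mathbb{R}^{n+4}_{\leqq}$, fixing signs so that the dual constraints read $P^\top u\geqq(\ell_1,\ell_2,\mathbf{0})^\top$ and the dual objective is $q^\top u$. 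By weak and strong duality ($\w(y)$ is a nonempty polytope by Proposition~\propitemref{prop1}{prop1:parttwo}), a feasible $u$ with $q^\top u=\ell_2$ exists exactly when
\begin{align*}
\max_{w\in\w(y)}\ \ell_1w_1+\ell_2w_2\leq \ell_2,
\end{align*}
with the equality $q^\top u=\ell_2$ attainable by moving within the dual feasible region. Geometrically, this says that $\w(y)$ lies entirely on the side $w_1(1+\lambda)+w_2\leq 1$ of the line $\mathcal{L}_{\w(\pblp^1(\lambda))}$. The point to stress is that the only nonlinearity of $\mathcal{P}$ was the product $\ell\cdot w$; in $\mathcal{P}^1_\wsc(\lambda_\ell)$ the parameters $\ell$ appear only on the right-hand side together with $u$, so the program is jointly linear in $(u,\ell)$.

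The first step is to show that $\mathcal{P}^1_\wsc(\lambda_\ell)$ computes
\begin{align*}
\lambda^*\coloneqq\sup\{\lambda\geq 0:\ \w(y)\subseteq\{w: w_1(1+\lambda)+w_2\leq 1\}\}.
\end{align*}
This needs a monotonicity observation. For every $w\in\w(\tolp)$ we have $w_1\geq 0$, so $w_1(1+\lambda)+w_2$ is nondecreasing in $\lambda$. Hence the set of $\lambda$ satisfying the containment is an interval $[0,\lambda^*]$, closed because $\w(y)$ is compact. Since $\ell_1$ is strictly increasing in $\lambda$, maximizing $\ell_1$ over the certificates is the same as computing $\lambda^*$.

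The second step is to show that $\lambda^*$ equals the optimal value of $\mathcal{P}$, namely the smallest $\lambda$ for which the segment $\mathcal{L}_{\w(\pblp^1(\lambda))}$ meets $\w(y)$ at a point other than $(0,1)$. I would use that all these segments form a fan through $(0,1)$ (Remark~\ref{remark1}). At $\lambda^*$ the supporting inequality is tight, so the line touches $\w(y)$. For $\lambda<\lambda^*$ every $w\in\w(y)$ with $w_1>0$ satisfies the strict inequality $w_1(1+\lambda)+w_2<1$, so the line misses $\w(y)\setminus\{(0,1)\}$. Conversely, if the line meets $\w(y)$ at some $w$ with $w_1>0$, then for all larger $\lambda$ that point lies strictly on the far side, which forces $\lambda\geq\lambda^*$. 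Together these give equality of the two optimal values.

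The main obstacle I expect is the excluded point $(0,1)$ together with degenerate components. If $\w(y)$ contains $(0,1)$, or has an edge on $w_1=0$, the touching point at $\lambda^*$ might be exactly $(0,1)$. I would handle this using full-dimensionality from Proposition~\propitemref{prop1}{prop1:partthree}: $\w(y)$ contains points with $w_1>0$ arbitrarily close to any of its points, so the infimum in $\mathcal{P}$ is still $\lambda^*$. A second, smaller check is the case where the dual LP is unbounded in $\ell_1$ (that is, $\ell_2\to 0$). This corresponds to $\lambda^*=\infty$ and should be reported as the interval being unbounded above, mirroring the discussion for $\pblp^2$.
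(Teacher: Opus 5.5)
Your argument is correct in substance but organized differently from the paper's. The paper never introduces $\lambda^*$. It fixes an optimal pair $(\ell^*,w^*)$ of $\mathcal{P}$ and uses only that $\ell_1\mapsto\ell_1(1+w_1-w_2)+w_2-1$ is strictly increasing whenever $(w_1,w_2)\neq(0,1)$. From this it gets two things. First, $w^*$ is optimal for $L_{\max}(\ell^*)$ with value $\ell_2^*$, because a better $w'$ would be met by the line at a smaller $\ell_1$; strong duality then yields $u^*$ with $(\ell^*,u^*)$ feasible. Second, any feasible $(\tilde\ell,\tilde u)$ with $\tilde\ell_1>\ell_1^*$ contradicts weak duality tested against the same $w^*$. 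So a single primal point $w^*\neq(0,1)$ certifies both directions, strong duality is needed only at $\ell^*$, and the excluded point never causes trouble. Your route identifies both optimal values with the supporting parameter $\lambda^*$. It explains more transparently why a minimization turns into a maximization: first contact of the fan through $(0,1)$ equals the last $\lambda$ for which $\w(y)$ stays on the origin side. However, two steps need repair.

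\textbf{Attaining $q^\top u=\ell_2$.} The ``if'' half of your duality equivalence requires raising $q^\top u$ from the dual optimum to exactly $\ell_2$. This does hold here: increasing the (effectively free) multiplier of $\mathbf{1}^\top w=1$ raises the three $w$-rows of $P^\top u$ and $q^\top u$ by the same amount. You should say so, or simply apply strong duality only at $\lambda^*$, where the maximum equals $\ell_2$.

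\textbf{The excluded point $(0,1)$.} This is the step that fails as written. Closeness of points with $w_1>0$ to $(0,1)$ gives no control over the parameter $w_3/w_1$ at which the line passes through them, so it does not yield $\mathrm{opt}(\mathcal{P})\le\lambda^*$. Use instead that the line with parameter $\mu$ contains a point $w$ with $w_1>0$ if and only if $\mu=w_3/w_1$, and contains no point with $w_1=0$ other than $(0,1)$. Hence both $\lambda^*$ and the value of $\mathcal{P}$ equal $\inf\{w_3/w_1: w\in\w(y),\ w_1>0\}$. For attainment: for every $\lambda>\lambda^*$, the linear function $w_1(1+\lambda)+w_2$ exceeds $1$ at some vertex $v$ of $\w(y)$. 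This forces $v_1>0$ and $v_3/v_1\in[\lambda^*,\lambda)$. Since there are finitely many vertices, some vertex has $v_3/v_1=\lambda^*$, which also gives the equality needed in the previous paragraph. Full-dimensionality is needed only to guarantee a point with $w_1>0$, i.e.\ $\lambda^*<\infty$.

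Finally, $\mathcal{P}^1_{\wsc}(\lambda_\ell)$ cannot be unbounded, since $\ell_1\le 1$ is built into its constraints. Moreover, $\ell_1=1$ would force $w_1=0$ on all of $\w(y)$.
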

	\begin{proof}
	In this proof, we use the following pair of dual linear programming problems \\
	\begin{minipage}[t]{0.48\textwidth} 
		\begin{alignat*}{2}
			&\mathrm{\max} \ \quad && \ell_1w_1  +\ell_2w_2\\ \tag{$L_{\max}(\ell)$}
			& \mathrm{s.t.} && Pw \geqq q\\ 
			&  &&  w \in \mathbb{R}_{\geqq}^{m+3}
		\end{alignat*}
	\end{minipage}%
	\hfill 
	\begin{minipage}[t]{0.48\textwidth} 
		\begin{alignat*}{2}
			&\mathrm{\min} \ \quad &q^\top&  u\\ \tag{$L_{\min}(\ell)$}
			& \mathrm{s.t.} &P^\top&  u \geqq \begin{pmatrix}
				\ell_1\\
				\ell_2 \\
				\mathbf{0}
			\end{pmatrix} \\ 
			& \quad &&  u \in \mathbb{R}_{\leqq}^{n+4}.\\
		\end{alignat*}
	\end{minipage}
	Let $(\ell^*, w^*)$ be a feasible solution of $\mathcal{P}$ with solution value $\ell_1^*$.
	The proof is split into two parts; first, we show that there is a corresponding feasible solution $(\ell^*, u^*)$ of $\mathcal{P}^1_\wsc(\lambda_\ell)$.
	Second, we show that $(\ell^*, u^*)$ is also an optimal solution of $\mathcal{P}^1_\wsc(\lambda_\ell)$.
	
	\begin{enumerate}[label=\roman*.]
		\item Feasibility\\
		Since $(\ell^*,w^*)$ is feasible for $\mathcal{P}$, 
		\begin{align*}
			\ell_1^*w_1^* + \ell_2^*w_2^* = \ell_2^*. \tag{$\ast$} \label{eq:constraint1}
		\end{align*}
		Clearly, $w^*$ is also feasible for $L_{\max}(\ell^*)$ and has an objective function value $\ell_2^*$. \\
		Suppose $w^*$ is not optimal for $L_{\max}(\ell^*)$, then there exists some $w'$ that is feasible for  $L_{\max}(\ell^*)$ and\\
		\begin{align*}
			\ell_1^* w_1' + \ell_2^*w_2' > \ell_1^*w_1^* + \ell_2^*w_2^* &= \ell_2^*. \\
			\intertext{This implies}
			\ell_1^* w_1' + \ell_2^*(w_2'-1) &> 0.\\
			\intertext{Since $\ell_2^* = 1 - \ell_1^*$,} \label{eq:nonnegative}
			\ell_1^*(1+ w_1'-w_2') + w_2'-1 &>0. \tag{$\ast\ast$}
		\end{align*}
		Moreover, we have $(w_1,w_2) \neq (0,1)$ and $w \in \mathbb{R}^{m+3}_\geqq$. This means that the terms $1+ w_1'-w_2'$ and $w_2'-1$ in Inequality~(\refeq{eq:nonnegative}) are positive and negative, respectively. Thus, there exists some $\ell_1' < \ell_1^*$ such that 
		\begin{align*}
			\ell_1^*(1+ w_1'-w_2') + w_2'-1 &> \ell_1'(1+ w_1'-w_2') - w_2'+1 = 0 .\\
			\intertext{Using $\ell_2' = 1 - \ell_1'$, this implies}
			\ell_1' w_1' + \ell_2'w_2' &= \ell_2'.
		\end{align*}
		This contradicts our assumption that $(\ell^*,w^*)$ is an optimal solution of $\mathcal{P}$ since $(\ell',w')$ is also feasible and achieves a better solution value.
		Thus, $w^*$ is optimal for $L_{\max}(\ell^*)$.
		
		Since $L_{\max}(\ell^*)$ is feasible and has an optimal solution, the dual $L_{\min}(\ell^*)$ is feasible and bounded. Due to strong duality,  there exists some $u^*$ that is feasible for $L_{\min}(\ell^*)$ such that 
		\begin{align*}
			q^\top u^* = \ell_1^*w_1^* +\ell_2^*w_2^*.
		\end{align*}
		This leads to
		\begin{align*}
			q^\top u^*  = \ell_2^*.
		\end{align*}
		Thus, $(\ell^*, u^*)$ is also feasible for $\mathcal{P}^1_\wsc(\lambda_\ell)$.\\
		\item Optimality\\
		Assume $\ell_1^*$ is not the optimal function value of $\mathcal{P}^1_\wsc(\lambda_\ell)$. Then, there exists $\tilde{\ell}_1$ such that $\tilde{\ell}_1 < \ell_1^*$. The corresponding optimal solution $(\tilde{\ell}, \tilde{u})$ is feasible for $\mathcal{P}^1_\wsc(\lambda_\ell)$ and, in particular, it satisfies
		\begin{align*}
			q^\top \tilde{u}  = \tilde{\ell_2}.
		\end{align*}
		Clearly, $\tilde{u}$ is feasible for $L_{\min}(\tilde{\ell})$.
		At the same time, $w^*$ is feasible for $L_{\max}(\tilde{\ell})$ and due to weak duality, 
		\begin{align*}
			\tilde{\ell} w_1^* + \tilde{\ell_2} w_2^* \leq q^\top \tilde{u} = \tilde{\ell_2}.
		\end{align*}
		However, from Equality~(\refeq{eq:constraint1}), we obtain
		\begin{align*}
			\ell_1^*w_1^* +\ell_2^*w_2^* &=  \ell_2^*.\\
			\intertext{Using $\ell_2^* = 1 - \ell_1^*$, we get}
			\ell_1^*(1+ w_1^*-w_2^*) + w_2^*-1 &=0.\\
			\intertext{Since $\tilde{\ell}_1 > \ell_1^*$ and due to the same reasoning for Inequality~(\ref{eq:nonnegative}),} 
			\tilde{\ell}_1(1+ w_1^*-w_2^*) + w_2^*-1& >0\\
			\implies \tilde{\ell}_1 w_1^* + \tilde{\ell}_2 w_2^* &> \tilde{\ell_2}.
		\end{align*}
		This is not possible due to weak duality, and we get a contradiction. Thus, $\ell_1^*$ is an optimal function value of $\mathcal{P}^1_\wsc(\lambda_\ell)$.
	\end{enumerate}
	\end{proof}
	%Next, using the minimum value of $\mathcal{P}^1_\wsc(\lambda)$, say $\ell^*_1$, we compute the corresponding parameter value $\lambda$ with the following equation;
	%\begin{align*}\label{eqn12}
	%	\lambda_\ell = \frac{w_3^*}{w_1^*} \tag{12}
	%\end{align*}
	After finding the maximum value of $\mathcal{P}^1_\wsc(\lambda_\ell)$, say $\ell^*_1$, the corresponding value of $\lambda_\ell$ is calculated as $\lambda_\ell = \frac{1-2 \ell^*_1}{1-\ell^*_1}$. Observe that the optimal value of $\ell_1 = 1$ in $\mathcal{P}^1_\wsc(\lambda_\ell)$ implies $\lambda \to \infty$ in the parameter set.
	
	Using a similar argument, we solve the following minimization variant of $\mathcal{P}^1_\wsc(\lambda_\ell)$ to compute the minimum $\ell_1$ that corresponds to the parameter $\lambda_u$ 
	\begin{align*}
	& \begin{alignedat}{4}
		& \min        \quad & \ell_1  & & &            \\
		& \text{s.t.} \quad & P^\top u & & & \leqq \begin{pmatrix} \ell_1 \\ \ell_2 \\\mathbf{0} \\ \end{pmatrix}, \\
		&& q^\top u  & {}-{} &\ell_2 & = 0, \\
		&&  \ell_1 & {}+{} &\ell_2 & = 1,  
	\end{alignedat} \tag{$\mathcal{P}^1_\wsc(\lambda_u)$} \\
	& \hphantom{\max \quad}   u \in \mathbb{R}_{\geqq}^{n+4}, \ell \in \mathbb{R}_{\geqq}^{2}.
	\end{align*}
	
	%Finally, we use the following equation, 
	%\begin{align*}\label{eq:lambdavalue}
	%	\lambda \coloneqq \frac{\ell_1}{\ell_2}-1 \tag{11}
	%\end{align*}
	%to find the corresponding $\lambda$ values. This equation (\ref{eq:lambdavalue}) is a direct derivation from the fact that the weight sets of $\pblp^1$ are hyperplanes $-\ell_1 w_1 + \ell_2w_2 = -\ell_2$ with the corresponding slope $\frac{-\ell_1}{\ell_2}$ and Remark~\propitemref{remark1}{remark1:parttwo}. 
	%Moreover, the optimal function value $\ell_1^*$ of $\mathcal{P}^1_\wsc(\lambda)$ and $\mathcal{P}_{w_l}$ corresponds to the slope of the lines that represent the weight sets of $\pblp^1(\lambda_u)$ and $\pblp^1(\lambda_\ell)$, respectively.\\
	
	As we have simplified the weight set component in equation~(\ref{eqn:10}), we now incorporate the formal definition of the weight set component into the linear program $\mathcal{P}^1_\wsc(\lambda_\ell)$. 
	Consequently, we solve the following linear program to find $\lambda_\ell$:
	\begin{align*}
	\begin{alignedat}{6}
		& \mathrm{max} \quad & && && && \ell_1\\
		& \mathrm{s.t.} & Ax & {}+{} & b x_{\mathrm{opt}} & & & & &\leqq0, \\
		&& -Cx & {}-{} & yx_{\mathrm{opt}} & {}+{} & x_w & {}-{} & \begin{pmatrix}
			-1 & 0 \\
			0  & -1 \\
			0  & 0 
		\end{pmatrix} \text{\makebox[0pt][l]{$\ell$}\hphantom{$\ell_2$}} & \leqq 0, \\
		&& && && x_w & {}-{} &\ell_2 & = 0,\\
		&& && && && \ell_1 + \ell_2 & = 1,
	\end{alignedat}\\
	x \in \mathbb{R}_{\geqq}^n,\ x_{\mathrm{opt}} \in\mathbb{R},\ x_w \in \mathbb{R},\ \ell \in \mathbb{R}_{\geqq}^2.
	\end{align*}
	
	This procedure is applied to each solution $x \in S$, generating the corresponding parameter interval $\left[\lambda_\ell, \lambda_u\right]$. The union of all interval boundaries constitutes the breakpoints set, with the complete algorithm presented in Algorithm~\ref{alg1}.
	
	\begin{algorithm}[t]
	\caption{Breakpoint enumeration algorithm}
	\label{alg1}
	\begin{algorithmic}[1]
		\Require A minimal solution set $S$ that corresponds to the extreme nondominated images $Y_\en$ of the corresponding TOLP.
		\Ensure For all $x \in S$,  a parameter interval $[\lambda_\ell, \lambda_u]$ and a set of breakpoints $\mathcal{B}$.
		
		%\State Initialize $\mathcal{B} \gets \{\}$ \Comment{Stores all computed $\lambda_\ell, \lambda_u$ as breakpoint candidates}
		%\State Initialize $\mathcal{D} \gets \{\}$ \Comment{Key: $\lambda$ value, Value: set of optimal solutions at this $\lambda$}
		%\State Initialize $\mathcal{I} \gets \{\}$ \Comment{Stores the final intervals and their optimal sets}
		%\If {j = 1}
		\For{\textbf{all} $x\in S$}
		\State $\lambda_\ell \gets$ Solve $\mathcal{P}^j_\wsc(\lambda)$
		\State $\lambda_u \gets $ Solve the opposite variant of $\mathcal{P}^j_\wsc(\lambda)$
		%\State Solve the linear programs minimize $\lambda$ and maximize $\lambda$ to find $\lambda_\ell$ and $\lambda_u$ respectively.
		%\State \Comment{Now compute the corresponding $\lambda$ values}
		%\State $\lambda_\ell^{(k)} \gets \frac{\ell_1}{\ell_2} - 1 $ 
		\State Add $\lambda_\ell^{(k)}$ and $\lambda_u^{(k)}$ to the breakpoint list $\mathcal{B}$
		%\State For all $\lambda$ in $[\lambda_\ell^{(k)}, \lambda_u^{(k)}]$, $\mathbf{y}^{(k)}$ is optimal. Store this in $\mathcal{D}$.
		\EndFor
		%\ElsIf {j = 2}
		%\For{\textbf{all} $\mathbf{y}^{(k)} \in Y_\en$}
		%\State Solve $P_\wsc$ using the weights $\ell \coloneqq (1,1)$ and $-\ell$ to compute $w_u$ and $w_l$ respectively.
		%\State \Comment{Now compute the corresponding $\lambda$ values}
		%\State $\lambda_u^{(k)} \gets\frac{\mathbf{w}_3^u}{1- \mathbf{w}_3^u}$ 
		%\State $\lambda_\ell^{(k)} \gets \frac{\mathbf{w}_3^l}{1- \mathbf{w}_3^l} $
		%\State Add $\lambda_\ell^{(k)}$ and $\lambda_u^{(k)}$ to the breakpoint list $\mathcal{B}$
		%\State For all $\lambda$ in $[\lambda_\ell^{(k)}, \lambda_u^{(k)}]$, $\mathbf{y}^{(k)}$ is optimal. Store this in $\mathcal{D}$.
		%\EndFor
		%\EndIf
		%\State \textbf{Sort} the list of breakpoints $\mathcal{B}$ in ascending order and remove duplicates
		%\State \Comment{Now, process the sorted breakpoints to create the intervals}
		%\For{$i \gets 0$ \textbf{to} $|\mathcal{B}| - 2$} \Comment{Iterate over consecutive pairs}
		%\State $\lambda_{\text{start}} \gets \mathcal{B}[i]$
		%\State $\lambda_{\text{end}} \gets \mathcal{B}[i+1]$
		%\State \todo{Choose a representative $\lambda^*$ in the open interval $(\lambda_{\text{start}}, \lambda_{\text{end}})$ (e.g., the midpoint)}
		%\State Find the set $S^*$ of all extreme nondominated  points $\mathbf{y}^{(k)} \in Y_\en$ for which $\lambda_\ell^{(k)} \leq \lambda^* \leq \lambda_u^{(k)}$
		%\State \textbf{Append} the tuple $\Big( [\lambda_{\text{start}}, \lambda_{\text{end}}],\ S^* \Big)$ to the interval list $\mathcal{I}$
		%\EndFor
	\end{algorithmic}
	\end{algorithm}
	If necessary, the breakpoints can be sorted using the collection of parameter intervals obtained for all $x \in S$. Each resulting parameter interval (or a particular value) in the parameter set corresponds to a unique minimal solution set, with breakpoints marking the critical parameter values at which transitions between solution sets occur. Specialized data structures, for example interval trees, can be used to optimize this sorting and partitioning process (cf.~\cite{Edelsbrunner1983}).
	%Thats the end of the algorithm but if necessary, we can sort the breakpoints using the collection of parameter intervals for all $x \in S$. Each resulting interval (or value) of the parameter set corresponds to a unique minimal solution set, with breakpoints identifying the critical parameter values where the solution sets transition. Specialized data structures can be used to optimize this sorting and partitioning procedure.
	%We sort the breakpoints in order to divide the parameter set into intervals and/or values which corresponds to a unique minimal solution set. Using these intervals of $\lambda$ for each $ x \in S$ to find breakpoints such that we have a change in minimal solution sets at these breakpoints. 
	%We create a set of tuples $\mathcal{I}$ defined as $\mathcal{I} \coloneqq \{([\lambda_\ell, \lambda_u], S^*)\colon \lambda_\ell, \lambda_u \in \mathcal{B}, S^* \subseteq S \}$ to store the final intervals and their optimal sets.  
	
	\begin{theorem}
	The algorithm finds all the breakpoints in the parameter set.
	\end{theorem}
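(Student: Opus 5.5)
We show that every breakpoint $\lambda_i$ of $\pblp^j$ occurs as an endpoint $\lambda_\ell$ or $\lambda_u$ of the interval computed for some $x\in S$. Combined with the fact that every computed endpoint is a parameter value where the membership of some $x$ changes, this shows that $\mathcal{B}$ contains all breakpoints (possibly after discarding non-breakpoint endpoints such as $0$ or $\infty$).

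First, we establish the meaning of the computed interval. For $y\in Y_\en(\tolp)$ with solution $x\in S$, let $I(y)=\{\lambda\ge 0:\w(y)\cap\w(\pblp^j(\lambda))\neq\emptyset\}$, excluding the weight $(0,1,0)$ for $j=1$.
\begin{itemize}
\item Since $\w(y)$ is a convex polytope (Proposition~\propitemref{prop1}{prop1:parttwo}), the segments $\mathcal{L}_{\w(\pblp^j(\lambda))}$ sweep the weight set monotonically. For $j=2$ they are the level sets of $w_1^*+w_2^*$; for $j=1$ they are the level sets of the quasi-linear ratio $w_3^*/w_1^*$ (equivalently, of the slope).
\item Hence $I(y)$ is an interval $[\lambda_\ell,\lambda_u]$, with $\lambda_u$ possibly infinite.
\item Its endpoints are exactly the optima of $\mathcal{P}^2_\wsc$ and of its minimization variant. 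For $j=1$ we use Proposition~\ref{prop::optimality} together with the monotone reparametrization $\ell_1=(1+\lambda)/(2+\lambda)$.
\end{itemize}
Thus the algorithm outputs $I(y)$ for every $y\in Y_\en(\tolp)$. By Theorems~\ref{thm2} and~\ref{thm3}, together with Proposition~\ref{prop6}, $x$ belongs to a minimal solution set of $\pblp^j(\lambda)$ precisely when $\lambda\in I(y)$, up to the boundary subtleties discussed for ties.

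Second, we show that breakpoints are endpoints. Let $\lambda_i$ be a breakpoint, so $S(\pblp^j(\lambda_i-\varepsilon))\neq S(\pblp^j(\lambda_i+\varepsilon))$ for small $\varepsilon$, or it differs at $\lambda_i$ itself. Then the set $\nu(\lambda)$ of components whose interior is met by the segment changes at $\lambda_i$.
\begin{itemize}
\item If $\nu(\lambda)$ is constant on a neighbourhood of $\lambda_i$, we can argue as in the proof of Proposition~\ref{prop7}. Since $Y_\en(\tolp)$ is finite, the solution sets coincide there, which is a contradiction.
\item Hence some $\w(y)$ enters or leaves $\nu(\lambda)$ at $\lambda_i$, meaning $\lambda_i\in\partial I(y)$ in the relative sense. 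Because $I(y)$ is a closed interval and the interior of $\w(y)$ is met exactly for $\lambda$ in the interior of $I(y)$, we get $\lambda_i\in\{\lambda_\ell(y),\lambda_u(y)\}$.
\item For the isolated-value breakpoints (the case illustrated in Figure~\ref{fig6}), the segment at $\lambda_i$ passes only through an extreme weight of a component. That component touches the segment only there, so $\lambda_i$ is again an endpoint of its $I(y)$.
\end{itemize}

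The main obstacle is making rigorous the claim that the interior of $\w(y)$ meets $\mathcal{L}_{\w(\pblp^j(\lambda))}$ exactly for $\lambda$ in the open interval $(\lambda_\ell,\lambda_u)$.

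For $j=2$, this follows from the linear functional $w_1^*+w_2^*$: on a full-dimensional polytope, the interior values of a nonconstant linear functional form the open range of its values.

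For $j=1$, we use the functional $w_1^*(1+\lambda)+w_2^*$ with the fixed pivot point $(0,1)$. The segments are rays from $(0,1)$, so the relevant quantity is a linear-fractional function of $w^*$, whose range over a convex set is again an interval. The excluded pivot $(0,1,0)$ must be handled separately: a component containing it has infimum or supremum attained only there. This is why the algorithm excludes it, and we would verify that the remaining extreme weights still give the correct endpoint.

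Finally, finiteness (Corollary~\ref{coro1}) ensures that the union over $S$ is a finite set containing all breakpoints.
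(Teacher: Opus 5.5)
Your proposal is correct and follows the paper's route: breakpoints are exactly the parameter values at which some solution's interval $[\lambda_\ell,\lambda_u]$ begins or ends. You also make rigorous the direction the paper's two-line proof only asserts, namely that every breakpoint is such an endpoint, by combining the local constancy of $\nu(\lambda)$ from the proof of Proposition~\ref{prop7} with the fact that the level-set function ($w_1^*+w_2^*$ for $j=2$, $w_3^*/w_1^*$ for $j=1$) takes on $\interior\,\w(y)$ exactly the values corresponding to $(\lambda_\ell,\lambda_u)$.

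Two small fixes. First, $x$ lies in a minimal solution set of $\pblp^j(\lambda)$ for every $\lambda\in(\lambda_\ell,\lambda_u)$, but at an endpoint only when the segment contains an edge of $\w(y)$; at a vertex-only contact, as at $\lambda_1$ in Figure~\ref{fig6}, it drops out, which is not a tie phenomenon. Second, the pivot check you defer is immediate, because $w_3^*/w_1^*$ is constant along every ray from $(0,1,0)$, so the extremes over $\w(y)\setminus\{(0,1,0)\}$ are attained at the far endpoints of the two edges through the pivot.
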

	\begin{proof}
	The maximum and minimum values of the parameter interval  dictate when a feasible solution $x$ enters and exits a minimal solution set $S$. 
	Thus, leading to a change in a minimal solution set and serves as breakpoints.
	%	The algorithm traverses over all the weight set components of all extreme nondominated images. By Proposition~\ref{prop7}, we know that all breakpoints correspond to some extreme point of the weight set components. Therefore, for each breakpoint, there is a corresponding minimal solution set with respect to the BOLP $\pblp$ at that breakpoint. 
	\end{proof}
	
	\begin{theorem}
	The algorithm has a running time of $\mathcal{O}{\left(\left| S\right| T_\mathrm{ws}\right)}$, where $T_\mathrm{ws}$ is the running time of the linear program $\mathcal{P}^j_\wsc(\lambda)$.
	\end{theorem}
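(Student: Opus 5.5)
The argument is a direct count of the work done in Algorithm~\ref{alg1}. The input is a minimal solution set $S$ of $\tolp$, which by Proposition~\ref{prop6} is also a minimal solution set of $\pblp^j$. The time to compute $S$ is therefore not charged to the algorithm. The only loop runs over $x \in S$, so it executes exactly $\left|S\right|$ times, and the proof reduces to bounding the cost of one iteration by $\mathcal{O}(T_\mathrm{ws})$.

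For one iteration with solution $x$ and image $y = (c_1x, c_2x, d_1x)^\top$, I would proceed in three steps.

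\emph{Building the program.} Assembling $\mathcal{P}^j_\wsc(\lambda)$ means writing down the description~(\ref{eq:weightsetcomponent}) of $\w(y)$ together with the line-segment constraint from Remark~\ref{remark1}~\ref{remark1:parttwo} or Remark~\ref{remark2}~\ref{remark2:parttwo}. Both are read off directly from $A$, $b$, $C$ and $y$. This takes time linear in the size of the program, which is dominated by the time needed to solve it. Computing $y = Cx$ costs $\mathcal{O}(n)$ and is absorbed in the same way.

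\emph{Solving.} We solve $\mathcal{P}^j_\wsc(\lambda_\ell)$ once and its opposite variant (for $\lambda_u$) once. The two variants share the same constraint system up to the direction of the objective and the signs of some inequalities, so they have the same dimensions. I would define $T_\mathrm{ws}$ as the maximum running time over these two variants, so that each iteration costs at most $2T_\mathrm{ws}$.

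\emph{Post-processing.} The optimal values are converted into $\lambda_\ell$ and $\lambda_u$ using the closed forms given earlier, $\lambda_\ell = \frac{1}{s_\mathrm{max}}-1$ and $\lambda_\ell = \frac{1-2\ell_1^*}{1-\ell_1^*}$. This includes detecting the degenerate cases (optimal value $0$, or $\ell_1^*=1$), which are recorded as $\lambda = \infty$. Appending the results to $\mathcal{B}$ takes $\mathcal{O}(1)$ time.

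Summing over the loop gives $\left|S\right|\bigl(2T_\mathrm{ws} + \mathcal{O}(1)\bigr) = \mathcal{O}(\left|S\right| T_\mathrm{ws})$.

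The main point of care is well-definedness of $T_\mathrm{ws}$: every program solved in the loop must have the same size. The programs $\mathcal{P}^j_\wsc$ all have $m+3$ weight variables and $n+4$ constraints (see~(\ref{eqn:10})), independent of which $x$ is processed; only the right-hand side involving $y$ changes. So a single bound $T_\mathrm{ws}$, for instance a polynomial bound from an interior-point method in the encoding length of $A,b,C,y$, applies uniformly. I would also remark explicitly that the optional sorting of $\mathcal{B}$ afterward is not part of Algorithm~\ref{alg1}. If it were included, it would add $\mathcal{O}(\left|S\right|\log\left|S\right|)$, since each solution contributes at most two entries to $\mathcal{B}$.
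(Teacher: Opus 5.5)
Your proposal is correct and follows the same route as the paper's proof. The loop runs $\left|S\right|$ times, each iteration is dominated by the two solves of $\mathcal{P}^j_{\wsc}(\lambda)$ in time $T_{\mathrm{ws}}$, and the conversion to $\lambda_\ell,\lambda_u$ and the bookkeeping are negligible; your remarks on the uniformity of $T_{\mathrm{ws}}$ and on sorting sharpen the paper's terse argument. There are two harmless slips, neither of which affects the bound: $y$ appears in the constraint matrix (the row $b^\top v - y^\top w = 0$, i.e.\ in $P$, not in $q$) rather than in the right-hand side, which is still fine because the dimensions do not depend on $x$; and the conversion formula you copied from the paper should read $\lambda = \frac{2\ell_1^*-1}{1-\ell_1^*}$, since $\ell_1 = \frac{1+\lambda}{2+\lambda}$.
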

	\begin{proof}
	The  minimization and maximization variants of the linear program $\mathcal{P}^j_{\wsc}(\lambda)$ can be solved polynomially in the encoding size of the program $\pblp^j$, say $T_\mathrm{ws}$ (cf.\cite{Boekler2015}). Other evaluations such as computing the corresponding parameter value and sorting also take polynomial time. The algorithm iterates over all the extreme nondominated images in $Y_\en$, thus it has a running time of $\mathcal{O}{\left(\left| S\right| T_\mathrm{ws}\right)}$.
	\end{proof}
	
	\subsection{Adapted weight set algorithm}
	As shown in the breakpoint enumeration algorithm (Algorithm~\ref{alg1}), the weight set components of $\w(\tolp)$ can be used to solve $\pblp^j$. 
	We now discuss a strategy that directly utilizes weight set decompositions of $\tolp$ to address our problem. 
	Weight set decomposition algorithms compute all extreme nondominated images of multiobjective linear programs, with the weight set decomposition emerging as an auxiliary output. 
	
	The algorithm begins with an approximation of the weight set components. It iteratively shrinks this approximation by verifying the vertices of the current weight set component until the exact weight set components are obtained.
	This approach ensures that all extreme points are identified for every weight set component associated with each extreme nondominated image. 
	We use these extreme weights to calculate the parameter intervals for all weight set components.
	%As we have shown in Algorithm \ref{alg1}, the weight set components of $\w(\tolp)$ can be used to solve $\pblp^j$. Weight set decomposition algorithms compute all extreme nondominated points of multiobjective programs. The decomposition of the weight set emerges as a byproduct during this computation, ensuring that all extreme points of the weight set components corresponding to nondominated extreme points are identified. In this algorithm, we adapt any of the existing weight set decomposition algorithms which ensure the computation of all the weight set components along with its extreme points for all $y \in Y_\en(\tolp)$. 
	The corresponding parameter values are computed using Equation~(\ref{eqn3}) from Theorem~\ref{thm1} for $\pblp^1$ and Equation~(\ref{eqn5}) from Theorem~\ref{thm2} for $\pblp^2$ as shown in Figure~\ref{fig12}.
	
	We begin with an empty parameter interval. 
	The first two parameters are used to initialize the upper and lower bounds, creating the initial interval $[\lambda_\ell, \lambda_u]$. Subsequently, for each new vertex processed,  the parameter is evaluated: if it is greater than $\lambda_u$, the upper bound is updated; if it is less than $\lambda_\ell$, the lower bound is updated.
	Thus, we determine the parameter interval $[\lambda_\ell, \lambda_u]$ within which the corresponding $x \in S$ of $y$ remains optimal.
	These parameter bounds are added to a set of breakpoints, if they are not already in the set.
	We repeat this process for every weight set component of $y \in Y_\en(\tolp)$.

	%Finally, we use this information to find the set of breakpoints and its corresponding minimal solution set.
	
	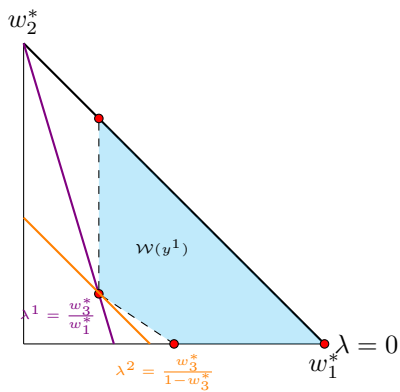
\begin{figure*}[]
	\centering
	\begin{tikzpicture}[scale=0.8]
		\draw (0,0) -- (5,0) node[below]{$w_1^*$};
		\draw (0,0) -- (0,5) node[above]{$w_2^*$};
		\draw plot coordinates {(0,5) (5, 0)};
		\draw [thick] plot coordinates{(0,5) (5, 0)} node[right] {$\lambda = 0$};
		\draw [thick, violet] plot coordinates {(0,5) (1.5, 0)} node[above left,xshift=-0.1cm,yshift=0] {\tiny $\lambda^1  = \frac{w_3^*}{w_1^*}$};
		\draw [dashed, fill=cyan, fill opacity = 0.25] plot coordinates {(2.5,0) (1.25, 5/6) (1.25, 3.75) (5, 0)};
		\draw plot[mark=*, mark options={fill=red}] coordinates {(2.5,0)};
		\draw plot[mark=*, mark options={fill=red}] coordinates {(5, 0)};
		\draw plot[mark=*, mark options={fill=red}] coordinates {(1.25, 5/6)};
		\draw plot[mark=*, mark options={fill=red}] coordinates {(1.25, 3.75)};
		\draw [thick, orange] plot coordinates {(0,2.1) (2.1, 0)} node[below, xshift=0.2cm] {\tiny $\lambda^2 = \frac{w_3^*}{1- w_3^*}$};
		\node at (2.3,1.6) {\tiny $\w(y^1)$};
	\end{tikzpicture}
	\caption{Evaluation of $\lambda^j$ associated to $\w(y^1)$ for $\pblp^j$, $j =1,2$. For all the extreme points of $\w(y^1)$ marked in red, its corresponding $\lambda$ values are computed. A solution $x^1 \in S$ is optimal for the parameter intervals $[0, \lambda_1]$ in $\pblp^1$ and $[0, \lambda_2]$ in $\pblp^2$. \label{fig12}}
	\end{figure*}
	
	%The computational overhead of determining breakpoints is minimal, as it involves only the calculation of parameter values and comparisons against the current interval bounds. 
	Parameter value calculations and comparisons with current interval bounds require $\mathcal{O}(1)$ time per operation in addition to the time taken to enumerate each extreme weight of a weight set component. Consequently, this constant-time overhead is asymptotically negligible in the overall weight set decomposition algorithm.
	This contrasts to the more significant overhead of the Breakpoint Enumeration Algorithm~\ref{alg1}, which is bounded by 
	$\mathcal{O}{\left(\left| S\right| T_\mathrm{ws}\right)}$. 
	A notable limitation of this method, however, is its dependence on a weight set decomposition algorithm, which restricts the choice of the underlying multi-objective programming algorithms. However, the Breakpoint Enumeration Algorithm~\ref{alg1} itself is efficient in scenarios where it computes the set $Y_\en(\tolp)$ faster than a complete weight set decomposition.
	
	%The algorithm computes all the weight set components of all the extreme nondominated images of the TOLP. We use the extreme points of the weight set component to compute the corresponding parameter values that are all candidates for breakpoints. Moreover, by Proposition \ref{prop6}, one breakpoint corresponds to at least one extreme weight. Thus, the algorithm computes all breakpoints and for each breakpoint $\lambda$, we have a unique solution set for $\pblp^j(\lambda)$. 
	
	%The running time of this algorithm will be dominated by the weight set decomposition algorithm because computing successive parameter values using extreme points of the weight set components can be done in polynomial time. 
	
	\section{Conclusion}\label{sec5}
	This article relies on the existing multi-objective programming methods and the weight set decomposition of the triobjective linear program to solve the parametric biobjective linear programs. 
	Our approach uses the weighted sum scalarization of the parametric biobjective linear program and the corresponding triobjective linear program formulation. 
	We have developed a theoretical framework relating the solution set of the parametric program to a multi-objective linear program using the structure of weight sets of both these problems.
	We propose two algorithms to address two distinct cases of parametric biobjective linear programs. 
	The first algorithm builds upon any multi-objective linear programming algorithm and solves linear programs to find breakpoints in the parameter set.
	For the second algorithm, we specifically adapt a weight set decomposition algorithm to enumerate the breakpoints. 
	
	\section*{Acknowledgments}
	Kezang Yuden is supported by the German Academic Exchange Service (DAAD) under the funding programme "Development-Related Postgraduate Course" (ID-57610012).
	Levin Nemesch is funded by the Deutsche Forschungsgemeinschaft (DFG, German Research Foundation) -- Project number 508981269 and GRK 2982, 516090167 ``Mathematics of Interdisciplinary Multiobjective Optimization``.
	
	\bibliographystyle{unsrt}
	\bibliography{Bibliography}
	
\end{document}